\title{Symmetry classes of Hamiltonian cycles\thanks{An extended abstract~\cite{MFCS} of this article appeared in the Proceedings of the  International Symposium on Mathematical Foundations of Computer Science (MFCS) 2025.}}
\titlerunning{Symmetry classes of Hamiltonian cycles}
\author{Júlia Baligács}{University of Oxford, United Kingdom}{jbaligacs@gmail.com}{https://orcid.org/0000-0003-2654-149X}{}
\author{Sofia Brenner}{Universität Leipzig, Germany}{sofia.brenner@uni-leipzig.de}{https://orcid.org/0009-0006-8512-2569}{Supported by a postdoctoral fellowship of the German Academic Exchange Service (DAAD) and DFG grant 522790373.}
\author{Annette Lutz}{Technische Universität Darmstadt, Germany}{lutz@mathematik.tu-darmstadt.de}{https://orcid.org/0009-0008-7699-7018}{Supported by DFG SFB/TRR 154, subproject A09.}
\author{Lena Volk}{Technische Universität Darmstadt, Germany}{volk@mathematik.tu-darmstadt.de}{https://orcid.org/0009-0004-3113-9205}{}
\authorrunning{J.~Baligács, S.~Brenner, A.~Lutz, and L.~Volk}
\keywords{Hamiltonian cycles, graph automorphisms, Cayley graphs, abelian groups, Cartesian product of graphs}
\newcommand{\N}{\mathbb{N}}
\newcommand{\Z}{\mathbb{Z}}
\newcommand{\mH}{\mathcal{H}}
\DeclareMathOperator{\Aut}{Aut}
\DeclareMathOperator{\Cay}{Cay}
\DeclareMathOperator{\bx}{\square}
\DeclareMathOperator{\dcup}{\dot{\cup}}
\DeclareMathOperator{\trunc}{T}
\renewcommand{\vec}[1]{\ensuremath{\boldsymbol{#1}}} 
\setlist[itemize]{topsep=3pt, itemsep=2pt}
\tikzstyle{vertex}=[circle, draw, fill=white, inner sep=0pt, minimum size=2mm]
\begin{document}

\maketitle

%%%%%%%%%%%%%%%%%%%%%%%%%%%%%%%%%%%%%%%
%%% abstract
%%%%%%%%%%%%%%%%%%%%%%%%%%%%%%%%%%%%%%

\begin{abstract}
	We initiate the study of Hamiltonian cycles up to symmetries of the underlying graph.
Our focus lies on the extremal case of \emph{Hamiltonian-transitive graphs}, i.e., Hamiltonian graphs where, for every pair of Hamiltonian cycles, there is a graph automorphism mapping one cycle to the other. This generalizes the extensively studied uniquely Hamiltonian graphs.
In this paper, we show that Cayley graphs of abelian groups are not Hamiltonian-transitive (under some mild conditions and some non-surprising exceptions), i.e., they contain at least two structurally different Hamiltonian cycles.
To show this, we reduce Hamiltonian-transitivity to properties of the prime factors of a Cartesian product decomposition, which we believe is interesting in its own right.
We complement our results by constructing infinite families of regular Hamiltonian-transitive graphs and take a look at the opposite extremal case by constructing a family with many different Hamiltonian cycles up to symmetry.
\end{abstract}

%%%%%%%%%%%%%%%%%%%%%%%%%%%%%%%%%%%%%%%
%%% Introduction
%%%%%%%%%%%%%%%%%%%%%%%%%%%%%%%%%%%%%%%

\section{Introduction}

Our work is motivated by a line of research studying the existence of Hamiltonian cycles in graphs that fulfill some symmetry condition.
One of the most intriguing open problems in this area is the \emph{Lovász Conjecture} \cite{Lovasz1970}: It asserts that every vertex-transitive graph, apart from five small counterexamples, contains a Hamiltonian cycle. 
Despite considerable efforts over the last decades~\cite{chen1998hamiltonicity,du2021resolving,KUT25,LovaszSurvey09,maruvsivc1987hamiltonian,muetze2016middlelevels,merino2023kneser}, this conjecture remains wide open.
Another prominent open problem in this area is \emph{Sheehan's Conjecture}~\cite{SHE75}. 
Together with results in~\cite{Thomason78,Tutte46}, its assertion is equivalent to the following:
Cycles are the only regular graphs that are uniquely Hamiltonian, i.e., contain precisely one Hamiltonian cycle.
It was proven in special cases \cite{ESF15, GOE20, UHgraphs-with-symmetries, SEA15}, but remains open in general.

One of the most important special cases for these conjectures is Cayley graphs~\cite{alspach1989lifting,du2021resolving,glover2012hamilton,pak2009survey,witte1984survey}.
While the Lovász conjecture for general Cayley graphs remains open, it is well known to hold for abelian groups (e.g., follows from~\cite{CHE81}).
In fact, Cayley graphs of abelian groups even ``exceed it'' in the sense that they are not uniquely Hamiltonian (except for cycles)~\cite{UHgraphs-with-symmetries}, i.e., they satisfy Sheehan's conjecture.
This raises the question of whether the multiple Hamiltonian cycles only exist due to many symmetries of Cayley graphs.
This is the central question of this paper and we solve it under some mild conditions.

We study Hamiltonian cycles in finite simple graphs up to symmetry.
More precisely, we consider two Hamiltonian cycles of a graph~$G$ to be equivalent, if there is an automorphism of~$G$ that maps one cycle to the other. Here we identify a cycle with its set of edges.
The focus of this paper lies on the extremal case of graphs that only contain a unique Hamiltonian cycle up to symmetry. We introduce the following notion.
\begin{definition}
	A graph $G$ is \emph{Hamiltonian-transitive} if it is Hamiltonian and, for every pair of Hamiltonian cycles, there is an automorphism of $G$ that maps one cycle to the other.
	We denote the class of all Hamiltonian-transitive graphs by~$\mH$.
\end{definition}

Note that every uniquely Hamiltonian graph is also Hamiltonian-transitive.
The intuitive difference is that we only distinguish Hamiltonian cycles that are \emph{structurally} different, which fits into a broader framework of studying combinatorial objects up to isomorphism/symmetry~\cite{KIR24, MCK98}.
For example, the complete graph is Hamiltonian transitive, but not uniquely Hamiltonian.
This raises the question of to what extent Sheehan's conjecture generalizes to Hamiltonian transitivity.

We prove that a large family of Cayley graphs of abelian groups are not Hamil\-tonian-transitive, i.e., that Sheehan's conjecture for this family generalizes.
The starting point for this is studying product decompositions of graphs. More precisely, we study how Hamiltonian transitivity behaves with respect to Cartesian products, which we believe is interesting in its own right because it reduces Hamiltonian transitivity to properties of the prime factors of a graph.

Opposed to this, we give a simple construction for an infinite family of $d$-regular Hamiltonian-transitive graphs for any $d\geq 2$.
Through the lens of Sheehan's conjecture, this highlights one of the key differences between uniquely Hamiltonian and Hamiltonian-transitive graphs.

\paragraph*{Our results}

We characterize the Hamiltonian-transitive graphs within a large family of Cayley graphs of abelian groups and Cartesian products.

Our main result is a full characterization of Hamiltonian-transitive Cayley graphs of abelian groups of odd order.
In this paper, all graphs are finite and simple, which means in the context of Cayley graphs that we only consider finite groups with a generating set that is closed under taking inverses and does not contain the identity element.
In the following, we denote by $K_n$ and \(C_n\) the complete graph and the cycle on~$n$ vertices. For a graph~$G=(V,E)$, we write $|G|:=|V|$ for its order.

\begin{restatable}{theorem}{thmoddcayleygraphs}\label{thm:odd-cayley-graphs}
	Let $G$ be the Cayley graph of an abelian group with $n:=|G|\geq 3$ odd.
	Then~$G \in \mH$ if and only if~$G \in \{K_n, C_n\}$.
\end{restatable}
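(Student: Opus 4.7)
The forward direction is routine: $C_n$ contains a unique Hamiltonian cycle, while $\Aut(K_n) = S_n$ acts transitively on the set of Hamiltonian cycles of $K_n$ (any two of them are related by a relabeling of vertices).

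For the converse, let $G = \Cay(A, S)$ with $A$ an abelian group of odd order $n \geq 3$ and $G \notin \{K_n, C_n\}$; the plan is to exhibit two Hamiltonian cycles of $G$ not related by any graph automorphism. Since $n$ is odd, no non-identity element of $A$ equals its inverse, so $|S|$ is even and $S$ is partitioned into unordered inverse-pairs $\{s, -s\}$. The extremes $|S|=2$ and $|S|=n-1$ yield $C_n$ and $K_n$, respectively, so we may assume $4 \leq |S| \leq n-3$.

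The proof proceeds by strong induction on $n$, combining the Cartesian product reduction advertised in the paper's earlier sections with a direct construction in the ``prime'' case. Write $G = G_1 \bx \cdots \bx G_r$ as its unique prime factorization. A preliminary lemma (to be stated using the abelian hypothesis) shows that each factor $G_i$ is itself a Cayley graph of a subgroup of $A$, hence of odd order strictly dividing $n$ unless $r = 1$. If $r \geq 2$, one of two scenarios occurs: either some $G_i$ is neither complete nor a cycle---in which case the inductive hypothesis together with the earlier product theorem produces two inequivalent Hamiltonian cycles of $G$---or every $G_i \in \{K_{|G_i|}, C_{|G_i|}\}$, in which case $G$ is a product of pairwise-coprime cycles and complete graphs of odd order, and one inspects standard ``rectangular'' and ``diagonal'' Hamiltonian cycles to verify they are not identified by $\Aut(G)$.

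The remaining case is when $G$ is itself prime. Here I would invoke the fact that for abelian Cayley graphs of odd order distinct from $K_n$, the automorphism group is essentially the affine group $A \rtimes \mathrm{Stab}_{\Aut(A)}(S)$; in particular, every graph automorphism permutes the inverse-pairs $\{s, -s\} \subseteq S$. The multiset of inverse-pairs used by a Hamiltonian cycle then becomes an $\Aut(G)$-invariant, and it suffices to exhibit two Hamiltonian cycles with distinct such \emph{generator profiles}. A natural pair: a ``uniform'' cycle that traverses only one pair of generators (available whenever some $s \in S$ has order $n$), together with a ``spliced'' cycle obtained from the uniform one by a local swap that substitutes two consecutive $\pm s$ edges with two edges drawn from a different inverse-pair.

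The principal obstacle is exactly this construction of two Hamiltonian cycles with distinct profiles in every prime, odd-order abelian Cayley graph with $4 \leq |S| \leq n-3$. If no generator has order $n$, so $A$ is necessarily non-cyclic, the uniform cycle is unavailable and one must construct both cycles directly via a careful coset-walk argument, showing that either the graph actually decomposes (contradicting primality) or that two Hamiltonian cycles with different inverse-pair multisets can be built from coset traversals. The odd-order hypothesis is essential throughout: it excludes involutions in $S$, guarantees well-definedness of the inverse-pair partition, and rules out bipartite-type parity obstructions that could collapse the two constructed cycles into a single $\Aut(G)$-orbit.
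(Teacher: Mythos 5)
There is a genuine gap, and it sits exactly at the step your whole converse hinges on. You claim that for a prime abelian Cayley graph of odd order other than $K_n$, every graph automorphism is affine, so that the multiset of inverse-pairs used by a Hamiltonian cycle (the ``generator profile'') is an $\Aut(G)$-invariant. This is false. Take $\Gamma=\Z_3\times\Z_3$ and $S=\Gamma\setminus\langle s\rangle$ for a subgroup $\langle s\rangle$ of order $3$; then $\Cay(\Gamma,S)\cong K_{3,3,3}$, which is prime with respect to the Cartesian product, has odd order $9$, is neither complete nor a cycle, and satisfies $4\le|S|=6\le n-3$. Its automorphism group $S_3\wr S_3$ has order $1296$, far exceeding the affine group (of order at most $9\cdot 12=108$); an automorphism fixing $0$ and permuting vertices within one part sends the edge $\{0,2t\}$ (difference pair $\{t,2t\}$) to $\{0,2t+s\}$ (a different pair), so the generator profile of a Hamiltonian cycle is not preserved. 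Your ``uniform vs.\ spliced'' comparison therefore proves nothing even in the cases where it can be carried out. The paper avoids this trap by using only genuine automorphism invariants: the Hamilton compression $\kappa$ (\cref{lemma:samekappa}) and edge counts attached to the coordinates of a Cartesian factorization, which automorphisms provably respect.

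Two further steps are unsupported. First, your preliminary lemma that the Cartesian prime factors of $G$ are themselves Cayley graphs of subgroups of $A$ is asserted, not proved; the paper only obtains product decompositions in the much weaker situation where $S$ splits according to element orders (\cref{lemma:splittingcayley} and \cref{lemma:compositeorder}), and the reduction to the prime case is driven by \cref{thm:cart-prod} rather than by aligning the graph factorization with a group decomposition. Second, and most importantly, the case you yourself identify as the ``principal obstacle'' --- $G$ prime with no generator of order $n$ --- is left entirely without an argument, and it is precisely where the paper does its real work: group-induced layer structures from cosets of $\langle S'\rangle$, zigzag cycles whose segment length $\mu(C)$ can be tuned to a multiple of $n/\kappa(G)$ (\cref{thm:layerstructure}), forcing each layer to be $C_k$ or $K_k$ (\cref{thm:cyclic_generator}), and then eliminating or absorbing these cases (\cref{lemma:cayleycycle}, \cref{lemma:cayleycomplete}, \cref{lemma:cayleythreelayers}), with small orders such as $9$ and $27$ handled computationally (\cref{lemma:small-groups}). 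As written, your proposal establishes the easy direction and a correct reduction framework in outline, but the core of the theorem remains unproved.
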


For Cayley graphs of abelian groups of even order, we require somewhat different techniques and rely on a mild non-redundancy assumption on the generating set.
We obtain the following characterization, where the notation $G \bx H$ denotes the \emph{Cartesian product} and~$K_{n,n}$ denotes the complete bipartite graph in which both parts of the bipartition have size $n$.

\begin{restatable}{theorem}{thmcayleyeven}\label{thm:cayley_even}
	Let $G$ be the Cayley graph of an abelian group w.r.t.~some generating set~$S$ and assume that $n:=|G|\geq 4$ is even.
	Moreover, assume that there is some~$s \in S$ such that $S\setminus \{s,-s\}$ is non-generating.
	Then $G \in \mH$ if and only if~\hbox{$G \in \{C_n, C_4 \bx K_2, K_{3,3},  K_{4,4}\}$} or~$G=C_k \bx K_2$ where $k=n/2$ is odd.
\end{restatable}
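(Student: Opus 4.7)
The plan is to exploit the non-redundancy hypothesis to obtain a nontrivial Cartesian product decomposition of $G$, and then appeal to the Cartesian product reduction for Hamiltonian-transitivity developed earlier in the paper together with Theorem~\ref{thm:odd-cayley-graphs}.

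First, I would set $H = \langle S \setminus \{s,-s\} \rangle$, a proper subgroup of the underlying abelian group. Since $S$ generates the whole group, the quotient is cyclic of some order $m \geq 2$ generated by the image of $s$. Using the structure theorem for finite abelian groups, I would replace $s$ by an element $t$ of order exactly $m$ with $\langle t \rangle \cap H = 0$, chosen so that the Cayley graph is preserved. This yields an isomorphism
\[ G \;\cong\; \Cay(H,\, S \setminus \{s,-s\}) \,\bx\, F, \qquad F = \begin{cases} K_2, & m = 2, \\ C_m, & m \geq 3. \end{cases} \]

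Next, with $G$ written as a Cartesian product, I would invoke the paper's Cartesian product reduction to pass from Hamiltonian-transitivity of $G$ to structural properties of the prime factors. The factor $\Cay(H, S \setminus \{s,-s\})$ is itself an abelian Cayley graph: if $|H|$ is odd, Theorem~\ref{thm:odd-cayley-graphs} restricts it to $K_{|H|}$ or $C_{|H|}$, and if $|H|$ is even one can iterate with a further redundant generator (or treat the base cases by hand when no such generator exists). A careful case analysis then narrows the admissible configurations to exactly the graphs listed in the statement. For the ``if'' direction, I would verify Hamiltonian-transitivity of each listed graph directly: $C_n$ is uniquely Hamiltonian; $K_{4,4}$ and $C_4 \bx K_2$ admit a finite check; for $C_k \bx K_2$ with $k$ odd, I would classify Hamiltonian cycles by the set of ``rung'' edges they use and exhibit automorphisms of $C_k \bx K_2$ mapping any two such cycles to each other.

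The main obstacle is the first step: when $|s|$ and $|H|$ share a common factor, $\langle s \rangle$ need not be a direct complement of $H$, so the replacement $s \leadsto t$ must be made carefully (using the invariant factor decomposition) so as to preserve the Cayley graph's edge set exactly. A secondary difficulty is that the $K_2$ factor arising from $m=2$ (i.e., when $s$ is an involution) admits substantially more Hamiltonian-cycle flexibility and automorphisms than cycle factors of length $\geq 3$; this is precisely why the sporadic exceptions $K_{4,4}$, $C_4 \bx K_2$, and the infinite family $C_k \bx K_2$ with $k$ odd all occur in the involution case, and disentangling which Hamiltonian cycles are genuinely inequivalent there is the most delicate part of the case analysis.
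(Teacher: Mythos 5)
Your reduction collapses at the first step: under the stated hypothesis, $G$ is in general \emph{not} a Cartesian product, and no replacement of $s$ by some $t$ can make it one while preserving the edge set. Writing $H=\langle S\setminus\{s,-s\}\rangle$ and $m=|\Gamma:H|$, a decomposition $G\cong\Cay(H,S\setminus\{s,-s\})\bx F$ via \cref{lemma:splittingcayley} requires a generator of order exactly $m$ whose span meets $H$ trivially; whenever $\langle s\rangle\cap H\neq 0$ no such element need exist, and the obstruction sits in the group, not in the choice of generator. Concretely, for $\Gamma=\Z_8$ and $S=\{\pm1,\pm2\}$ with $s=1$ we get $H=\langle2\rangle$ and $m=2$, but the unique involution $4$ lies in $H$; the graph is $4$-regular, whereas any product $\Cay(H,\{\pm2\})\bx K_2\cong C_4\bx K_2$ is $3$-regular, so no isomorphism of the kind you posit exists. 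More generally, when $m=2$ but $s\neq-s$ every vertex has \emph{two} neighbours in the other coset ($v+s$ and $v-s$), so the cross edges form a $2$-regular bipartite graph rather than a matching, and for $m\geq3$ the wrap-around matching from the last coset back to the first is twisted by the nonzero element $ms\in H$. This is precisely why the paper does not decompose $G$ but only extracts the weaker group-induced $K$-$l$-layer structure (a spanning $K\bx P_l$), and then distinguishes explicitly constructed Hamiltonian cycles by invariants such as cycle-distances between adjacent vertices, in three separate arguments for $l\geq4$ (\cref{thm:main_cayley}), $l=3$ (\cref{lemma:l=3}), and $l=2$ (\cref{lemma:l=2}). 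None of that combinatorial work is subsumed by the Cartesian product machinery.

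The parts of your plan that do survive are the genuinely product-shaped subcases ($s=-s$ gives $G=K\bx K_2$, handled by \cref{thm:cartesian-product-K2}; a true direct complement gives $K\bx C_m$, excluded by \cref{thm:cart-prod}) and the ``if'' direction, which matches the paper (\cref{lem:oddcycle_prism} for odd prisms, plus finite checks). But the core of the theorem lives in the non-product configurations, for which your proposal offers no argument. A secondary issue: when $|H|$ is even you cannot simply ``iterate with a further redundant generator,'' since the layer graph $\Cay(H,S\setminus\{s,-s\})$ need not inherit one; the paper never recurses in this way.
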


A crucial ingredient for our proofs of Theorems~\ref{thm:odd-cayley-graphs}~and~\ref{thm:cayley_even} is the study of Cartesian products, which are interesting in their own right. First, observe that the Cartesian product of two Hamiltonian graphs is Hamiltonian again.
We show that most non-prime graphs (graphs decomposable into Cartesian products, formally defined in \cref{sec:cart-prod-general})  have, due to their symmetric structure, at least two structurally different Hamiltonian cycles.

\begin{restatable}{theorem}{thmcartprod}
	\label{thm:cart-prod}
	Let \(G\) and \(H\) be Hamiltonian graphs.
	\begin{itemize}
		\item If \(G\) and \(H\) are relatively prime, then \(G \bx H \notin \mH\).
		\item If \(G\) is prime, then \(G^{\bx n} \in \mH\) if and only if $n=1$ and $G\in \mH$.
	\end{itemize}
\end{restatable}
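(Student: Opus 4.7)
The plan is to exhibit, in each product, two Hamiltonian cycles lying in distinct $\Aut$-orbits. The key invariant I will exploit is the \emph{factor-edge count}: for a Hamiltonian cycle $C$ in $G_1 \bx \cdots \bx G_k$, let $e_i(C)$ denote the number of edges of $C$ that change only the $i$-th coordinate. By the classical prime-factorization theorem of Imrich for Cartesian products, every automorphism acts by an automorphism on each individual factor together with a permutation of isomorphic factors. Hence the multiset $\{e_1(C), \ldots, e_k(C)\}$ is invariant under $\Aut(G_1 \bx \cdots \bx G_k)$, and the ordered tuple is invariant whenever no two factors are isomorphic.

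\textbf{First bullet.} Since $G$ and $H$ are relatively prime, $\Aut(G \bx H) = \Aut(G) \times \Aut(H)$, so the ordered pair $(e_G(C), e_H(C))$ is an invariant. The first Hamiltonian cycle $C_1$ is the standard \emph{snake}: take a Hamiltonian path of $G$ in each row $V(G) \times \{w\}$ and stitch consecutive rows together via the edges of a Hamiltonian cycle of $H$ (with a minor adjustment if $|H|$ is odd); this yields $(e_G(C_1), e_H(C_1)) = (|H|(|G|-1), |H|)$. The second cycle $C_2$ is the symmetric snake with $G$ and $H$ swapped, giving $(|G|, |G|(|H|-1))$. Since $G$ and $H$ are Hamiltonian, $|G|, |H| \geq 3$, so $|H|(|G|-1) > |G|$, the two ordered pairs differ, and $C_1, C_2$ are inequivalent; this gives $G \bx H \notin \mH$.

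\textbf{Second bullet.} The case $n=1$ is immediate. For $n \geq 2$, Imrich's theorem yields $\Aut(G^{\bx n}) = \Aut(G) \wr S_n$, and the invariant reduces to the multiset $\{e_1, \ldots, e_n\}$. The iterated snake construction produces a Hamiltonian cycle $C_1$ whose multiset is strongly descending, of the shape $\{m^{n-1}(m-1),\, m^{n-1},\, m^{n-2},\ldots, m\}$ with $m = |G|$. To obtain $C_2$ with a different multiset, I build inside a single $G \bx G$ block a Hamiltonian cycle whose factor counts differ from the snake's $\{m(m-1), m\}$; extending the remaining $n-2$ coordinates snake-wise on top, the overall multiset differs from $M_1$ precisely in the entries coming from the modified block, so the two cycles lie in different orbits.

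\textbf{Main obstacle.} The technical heart is the base case $n=2$: constructing, for an arbitrary Hamiltonian graph $G$, a Hamiltonian cycle of $G \bx G$ with factor-edge multiset other than $\{m(m-1), m\}$. Naive local edge swaps between the two factor classes break the snake into two disjoint cycles, so the plan is a more global rerouting, e.g.\ diverting a pair of consecutive snake rows through a full column of the sub-torus $C_G \bx C_G$ inside $G \bx G$, which simultaneously exchanges a matched number of row-edges for column-edges while preserving connectivity. Handling odd $m$ may require small parity case distinctions; once this base construction is in hand, extending to $n \geq 3$ is routine by combining it with further snake layers.
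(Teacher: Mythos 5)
Your first bullet is essentially the paper's own argument: the paper also takes two snake cycles (one traversing the $G$-copies, one the $H$-copies), uses the fact that for relatively prime factors every automorphism of $G \bx H$ splits as $\varphi_G \times \varphi_H$ and hence preserves the two edge classes, and derives a contradiction by comparing the edge counts. Your exact counts require the parity adjustment you mention, but the large gap between the two cycles' counts makes this harmless.

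The second bullet, however, has a genuine gap. You correctly identify the invariant (the multiset $\{e_1(C),\dots,e_n(C)\}$, via the fact that automorphisms of $G^{\bx n}$ for prime $G$ permute the edge classes $E_1,\dots,E_n$), and you correctly identify that the whole proof rests on producing, for an \emph{arbitrary} Hamiltonian prime $G$, a second Hamiltonian cycle of $G \bx G$ whose factor-edge data differs from the snake's. But you do not actually construct it: "diverting a pair of consecutive snake rows through a full column of the sub-torus" is a plan, not a verified construction, and you yourself flag it as the main obstacle. This is precisely where the paper does real work: it exhibits an explicit second cycle $\hat{C}_2$ (with every $E_1$-edge sandwiched between $E_2$-edges), proves no automorphism can swap $E_1$ and $E_2$ by a local-structure argument, and then handles $G=K_3$ by an entirely separate refined count, since the generic construction needs $|G|\geq 4$. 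Your sketch does not address the small case $|G|=3$ at all, where $C_G \bx C_G = K_3^{\bx 2}$ leaves very little room for rerouting. Two further issues: your claimed multiset for the iterated snake, $\{m^{n-1}(m-1), m^{n-1}, m^{n-2},\dots,m\}$, sums to more than $m^n$ for $n\geq 3$, so it cannot be the edge profile of a Hamiltonian cycle; and the step "the overall multiset differs from $M_1$ precisely in the entries coming from the modified block" needs justification, since a change $(e_1,e_2)\mapsto(e_1+t,e_2-t)$ can in principle be absorbed by the $S_n$-permutation of coordinates — the paper's induction avoids this by first forcing the automorphism to fix $E_n$ via the inequality $|E(C_n)\cap E_n| < |E(\hat{C}_n)\cap E_i|$ and then descending to $G^{\bx(n-1)}$. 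Until the base construction is written down and verified (including $K_3$), the second bullet is not proved.
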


To extend our study of Cartesian products, we also investigate the special case of products with $K_2$ and give a full characterization of the Hamiltonian-transitive ones (\cref{thm:cartesian-product-K2}).

While these results give the impression that there are not many highly symmetric Hamiltonian-transitive graphs, we complement our findings by giving constructions of regular Hamiltonian-transitive graphs in \cref{sec:constructions}.
More precisely, we give a simple construction of an infinite class of Hamiltonian-transitive $d$-regular graphs for every $d \geq 2$  (\cref{rem:reg-graphs-in-H}).
Further, we show that truncations of cubic graphs preserve Hamiltonian transitivity, providing an interesting class of cubic 3-connected Hamiltonian-transitive graphs (\cref{thm:cub-graphs-in-H}).

These results focus on graphs with only one Hamiltonian cycle up to symmetry, and in the final part of this paper, we touch on the opposite extremal case by studying graphs with many Hamiltonian cycles up to symmetry. We provide a construction of graphs on $n$ vertices with $2^{\Theta(n \log (n))}$ symmetry classes of Hamiltonian cycles (\cref{prop:many-ham-cyc}), which matches a trivial upper bound on the number of structurally different Hamiltonian cycles.

\paragraph*{Related work}

There is a vast amount of work studying uniquely Hamiltonian graphs and Sheehan's Conjecture.
The arguably most important result for our work is that the conjecture was proven for graphs with a large automorphism group in~\cite{UHgraphs-with-symmetries}, including vertex-transitive graphs, and in particular, Cayley graphs.
Further research includes the degree sequences that uniquely Hamiltonian graphs can have (see e.g.,~\cite{UHgraphs-small-degree-vertices, UHgraphs-many-degree-sets, FLE14}) or verification of the conjecture for special graph classes, such as claw-free graphs of order not divisible by~6~\cite{ESF15}, claw-free graphs of minimum degree at least 3~\cite{SEA15}, and graphs of order up to~21~\cite{GOE20}. Bounds on the number of distinct Hamiltonian cycles in regular Hamiltonian graphs were given in~\cite{JOO24}. 

Uniquely Hamiltonian graphs as well as graphs with few Hamiltonian cycles have also been investigated from a computational point of view. Algorithms to compute a second Hamiltonian cycle in a graph were developed in~\cite{DEL24}. Further, an algorithm for the exhaustive generation of graphs with precisely $k$ Hamiltonian cycles is given in~\cite{GOE20}.  In~\cite{ITZ22}, the authors consider symmetry breaking for uniquely Hamiltonian graphs.
In~\cite{GOE24}, the authors study the number of Hamiltonian cycles in $k$-regular and $(k,l)$-regular graphs and disprove a lower bound on the number of Hamiltonian cycles in $k$-regular graphs conjectured by Haythorpe~\cite{DBLP:journals/em/Haythorpe18}. 

Motivated by the application for Gray codes,
there has recently been interest in finding Hamiltonian cycles that are particularly symmetric in the sense that they can be rotated by a graph automorphism. 
To this end, the \emph{Hamilton compression}, a parameter measuring the rotation symmetry of a Hamiltonian cycle, was introduced~\cite{GRE24} and investigated across various families of vertex-transitive graphs~\cite{KUT24,KUT25}. While the Hamilton compression measures the symmetries within a Hamiltonian cycle, we focus on the symmetries between different Hamiltonian cycles. However, the Hamilton compression is the same for all Hamiltonian cycles if the graph is Hamiltonian-transitive.

\section{Cartesian products}
\label{sec:cartprod}

Cartesian products provide a natural product decomposition of graphs, so that we begin with studying how Hamiltonian transitivity behaves with respect to such a decomposition.

More formally, the \textit{Cartesian product} of two graphs \(G\) and \(H\), denoted by \(G \bx H\), is defined as the graph with vertex set \(V(G) \times V(H)\) and two vertices \((g,h)\) and \((g',h')\) being adjacent if and only if \(g = g'\) and \(\{h, h'\} \in E(H)\), or \(h = h'\) and \(\{g, g'\} \in E(G)\).

%%%%%%%%%%%%%%%%%%%%%%%%%%%%%%%%%%
%%% cartesian products of Hamiltonian graphs
%%%%%%%%%%%%%%%%%%%%%%%%%%%%%%%%%%

\subsection{Cartesian products of Hamiltonian graphs}\label{sec:cart-prod-general} 

In this subsection, we focus on Hamiltonian-transitive Cartesian products of Hamiltonian graphs.
Here we rely on the assumption that the factors are Hamiltonian because this ensures that their product is Hamiltonian.
In particular, we assume that each factor has order at least three.  
Finding conditions under which~\(G \bx H\) is Hamiltonian for general graphs~\(G\) and~\(H\) is an independent research direction and remains wide open.

Next, let us introduce some of the notation in the context of Cartesian products. 
A non-trivial graph is called \emph{prime} if it cannot be decomposed into a Cartesian product of two~non-trivial graphs.
Every connected graph~\(G\) has a unique decomposition
\begin{equation}
	\label{eq:prime-factorization}
	G=H_1^{\bx r_1}  \bx \dots \bx H_k^{\bx r_k}
\end{equation}
for pairwise distinct prime graphs~\(H_1, \dots, H_k\) and \(r_1, \dots, r_k \in \N\) (see \cite[Chapter~6]{handbookprodgraphs}), where $H_i^{\bx r_i}$ denotes the Cartesian product of $r_i$ copies of $H_i$. In this case, we call the graphs~\(H_1, \dots, H_k\) \textit{prime factors} of~\(G\). Two graphs \(G\) and \(H\) are \textit{relatively prime}, if they do not share a prime factor.

With this, we can formulate the main result of this subsection.

\thmcartprod*

We split the proof of~\cref{thm:cart-prod} into two lemmata, beginning with the case that~\(G\) and~\(H\) are relatively prime.

\begin{restatable}{lemma}{lemmacartesianrelativeprime}\label{lem:prime-not-in-H}
	Let~\(G\) and~\(H\) be Hamiltonian graphs. If~\(G\) and~\(H\) are relatively prime, then~\(G \bx H \not \in \mH\).
\end{restatable}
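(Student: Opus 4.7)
The plan is to exhibit two Hamiltonian cycles of $G \bx H$ that cannot be mapped to one another by any automorphism of $G \bx H$, by distinguishing them via the number of \emph{$G$-edges} they contain, where a $G$-edge is an edge of the form $\{(g,h),(g',h)\}$ with $\{g,g'\}\in E(G)$.

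The first step uses the classical automorphism theorem for Cartesian products (see \cite[Chapter~6]{handbookprodgraphs}): since $G$ and $H$ share no prime factor, every automorphism of $G \bx H$ takes the product form $(g,h)\mapsto(\alpha(g),\beta(h))$ with $\alpha\in\Aut(G)$ and $\beta\in\Aut(H)$. Such an automorphism maps $G$-edges to $G$-edges and $H$-edges to $H$-edges, so the number of $G$-edges in a Hamiltonian cycle is an invariant of its orbit under $\Aut(G \bx H)$. It therefore suffices to produce two Hamiltonian cycles of $G \bx H$ with different $G$-edge counts.

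For the constructive step, let $n:=|G|\geq 3$ and $m:=|H|\geq 3$, and fix Hamiltonian cycles $C_G=a_0\cdots a_{n-1}a_0$ of $G$ and $C_H=b_0\cdots b_{m-1}b_0$ of $H$. I would use two snake-type Hamiltonian cycles of $G \bx H$: a \emph{column snake} that traverses each $H$-fiber by a Hamiltonian path along $C_H$ and bridges consecutive fibers by single $G$-edges from $C_G$, producing a Hamiltonian cycle with $n$ $G$-edges whenever it closes, and the symmetric \emph{row snake}, which produces a Hamiltonian cycle with $m(n-1)$ $G$-edges. Since $n,m\geq 3$ force $n\neq m(n-1)$, whenever both snakes close on the nose this already yields the two required Hamiltonian cycles.

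The main obstacle is parity: the column snake closes directly only if $n$ is even and the row snake only if $m$ is even. In the remaining parity cases, the plan is to patch the snake by replacing the last two fibers with a zig-zag path that weaves level by level through both fibers and closes via an appropriate $C_G$- (respectively $C_H$-) edge incident to $a_0$ (respectively $b_0$). This modification changes the $G$-edge count in a bounded, explicitly computable way, producing Hamiltonian cycles whose $G$-edge counts I can track, such as $n+m-1$ or $(n-1)(m-1)$. A short case analysis over the parities of $(n,m)$, together with the numerical observation that $n+m-1=(n-1)(m-1)$ amounts to $(n-2)(m-2)=2$ and therefore has no solution consistent with the parity constraints and $n,m\geq 3$, shows that the two Hamiltonian cycles obtained have different $G$-edge counts in every case, so $G\bx H\notin\mH$. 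Verifying that the zig-zag closes correctly in each remaining parity configuration, and that the arising $G$-edge counts are distinct, is the chief technical subtlety.
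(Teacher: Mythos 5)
Your proposal is correct and follows essentially the same route as the paper: both use the automorphism structure theorem for relatively prime Cartesian products to make the number of $G$-edges (resp.\ $H$-edges) an orbit invariant, and then compare two transposed snake-type Hamiltonian cycles. The only difference is cosmetic: the paper's snake reserves one full fiber as a straight path and closes via the wrap-around edge of the fixed Hamiltonian cycle, which yields the uniform counts $2(l-1)$ versus $2(l-1)+(k-2)(l-2)$ and sidesteps most of the parity casework you defer to the patching step.
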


\begin{proof}
	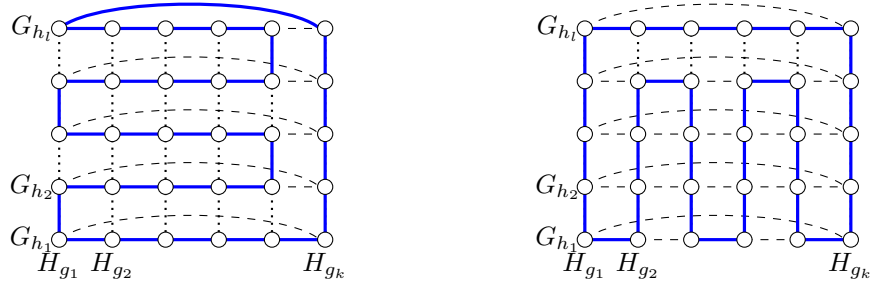
\begin{figure}
		\centering
		\begin{tikzpicture}[scale=0.7]
			%dotted edges
			\foreach \x in {0,1,2,3,4,5} {
				\foreach \y in {0,1,2,3} {
					\draw[gray] (\x,\y) to (\x,\y+1);
				}
			}
			
			% dashed edges
			\foreach \x in {0,1,2,3,4} {
				\foreach \y in {0,1,2,3,4} {
					\draw[gray] (\x,\y) to (\x+1,\y);
				}
			}
			
			% close dashed cycles
			\foreach \y in {0,1,2,3,4} {
				\draw[dashed, bend left=40, looseness=0.5, gray] (0,\y) to (4.9,\y);
			}
			
			% labels G
			\node at (-0.5,0) {\(G_{h_1}\)};
			\node at (-0.5,1) {\(G_{h_2}\)};
			\node at (-0.5,4) {\(G_{h_l}\)};
			
			% label H
			\node at (0,-0.5) {\(H_{g_1}\)};
			\node at (1,-0.5) {\(H_{g_2}\)};
			\node at (5,-0.5) {\(H_{g_k}\)};
			
			%draw the edges of the Hamiltonian cycle
			\foreach \x in {0,1,2,3} {
				\foreach \y in {0,1,2,3,4} {
					\draw[blue, very thick] (\x,\y) to (\x+1,\y);
				}
			}
			\draw[blue, very thick, bend left=40, looseness=0.5] (0,4) to (4.9,4);
			\draw[blue, very thick] (0,0) to (0,1);
			\draw[blue, very thick] (4,1) to (4,2);
			\draw[blue, very thick] (0,2) to (0,3);
			\draw[blue, very thick] (4,3) to (4,4);
			
			\draw[blue, very thick] (4,0) to (5,0);
			\foreach \y in {0,1,2,3} {
				\draw[blue, very thick] (5,\y) to (5,\y+1);
			}
			
			%vertices
			\foreach \x in {0,1,2,3,4,5} {
				\foreach \y in {0,1,2,3,4} {
					\node [style=vertex] (\x,\y) at (\x,\y) {};
				}
			}
		\end{tikzpicture}
		\hspace{2cm}
		\begin{tikzpicture}[scale=0.7]
			%dotted edges
			\foreach \x in {0,1,2,3,4,5} {
				\foreach \y in {0,1,2,3} {
					\draw[gray] (\x,\y) to (\x,\y+1);
				}
			}
			
			% dashed edges
			\foreach \x in {0,1,2,3,4} {
				\foreach \y in {0,1,2,3,4} {
					\draw[gray] (\x,\y) to (\x+1,\y);
				}
			}
			
			% close dashed cycles
			\foreach \y in {0,1,2,3,4} {
				\draw[gray, dashed, bend left=40, looseness=0.5] (0,\y) to (4.9,\y);
			}
			
			% labels G
			\node at (-0.5,0) {\(G_{h_1}\)};
			\node at (-0.5,1) {\(G_{h_2}\)};
			\node at (-0.5,4) {\(G_{h_l}\)};
			
			% label H
			\node at (0,-0.5) {\(H_{g_1}\)};
			\node at (1,-0.5) {\(H_{g_2}\)};
			\node at (5,-0.5) {\(H_{g_k}\)};
			
			%draw the edges of the Hamiltonian cycle
			\foreach \x in {0,1,2,3,4,5} {
				\foreach \y in {0,1,2} {
					\draw[blue, very thick] (\x,\y) to (\x,\y+1);
				}
			}
			\draw[blue, very thick] (0,0) to (1,0);
			\draw[blue, very thick] (2,0) to (3,0);
			\draw[blue, very thick] (4,0) to (5,0);
			\draw[blue, very thick] (0,3) to (0,4);
			\draw[blue, very thick] (5,3) to (5,4);
			\draw[blue, very thick] (1,3) to (2,3);
			\draw[blue, very thick] (3,3) to (4,3);
			\foreach \x in {0,1,2,3,4} {
				\draw[blue, very thick] (\x,4) to (\x+1,4);
			}
			
			%vertices
			\foreach \x in {0,1,2,3,4,5} {
				\foreach \y in {0,1,2,3,4} {
					\node [style=vertex] (\x,\y) at (\x,\y) {};
				}
			}
		\end{tikzpicture}
		\caption{Illustration of the construction in \cref{lem:prime-not-in-H}. The Hamiltonian cycle of~\(G \bx H\) on the left has~\(2(l-1)\) edges in~\(E_H\), while the Hamiltonian cycle on the right has~\(2(l-1)+(k-2)(l-2)\) edges in~\(E_H\).}
		\label{fig:can-ham-cyc-cart-prod}
	\end{figure} 
	Let~\(P := G \bx H\). We first introduce two types of edges in~\(P\). Then, we show that these edge sets are preserved under automorphisms of~\(P\). Using that there are two Hamiltonian cycles of~\(P\) containing a different number of edges from each of the respective two edge sets, we can then deduce that~\(P \not \in \mH\).
	
	For~\(v \in V(H)\) let~\(G_v \coloneq P[\{(u,v) \colon u \in V(G)\}]\) denote the subgraph of~\(P\) induced by all vertices~\((u,v)\) with~\(u \in V(G)\). Clearly, we have~\(G_v \cong G\). In the following, we call these graphs \textit{copies} of~\(G\) in~\(P\). Let~\(E_G\) denote edges of~\(P\) that are contained in a copy of~\(G\), i.e.,~\(E_G = \cup_{v \in H} E(G_v)\). Analogously define copies of~\(H\) in~\(P\) and the edge set~\(E_H\). We have~\(E(P)=E_G \dcup E_H\). (In \cref{fig:can-ham-cyc-cart-prod}, $E_G$ is the set of all horizontal edges and $E_H$ is the set of all vertical edges.)
	
	Every automorphism of~\(P\) preserves the sets~\(E_G\) and~\(E_H\) as sets: Since~\(G\) and~\(H\) are relatively prime, every automorphism \(\varphi \in \Aut(G \bx H)\) is of the form \(\varphi(u,v)=(\varphi_G(u),\varphi_H(v))\) for two automorphisms~\(\varphi_G \in \Aut(G)\) and~\(\varphi_H \in \Aut(H)\) and all \((u,v) \in G \bx H\) (see~\cite[Corollary~6.12.]{handbookprodgraphs}). Let~\(e \in E_G\). Then,~\(e=\{(u_1,v),(u_2,v)\}\) for some \(u_1,u_2 \in G\) and~\(v \in H\). Thus, we have \[\varphi(e)=\{(\varphi_G(u_1),\varphi_H(v)),(\varphi_G(u_2),\varphi_H(v))\} \in E(G_{\varphi_H(v)}) \subseteq E_G.\] Analogously, all edges from~\(E_H\) are mapped to edges of~\(E_H\).
	Let $k:=|G|$, $l:=|H|$ and fix a Hamiltonian cycle of $G$ and of $H$.
	Using these, we can construct the Hamiltonian cycle of~\(P\) depicted on the left of Figure~\ref{fig:can-ham-cyc-cart-prod}. (Note that the construction slightly changes depending on the parity of \(|H|\), but the following arguments hold for both.) This cycle has~\(2(l-1)\) edges in~\(E_H\) and~\(2(k-1)+(l-2)(k-2)\) edges in~\(E_G\).
	
	By switching the roles of~\(G\) and~\(H\) in the cycle construction, we get a second Hamiltonian cycle of~\(P\) depicted on the right of Figure~\ref{fig:can-ham-cyc-cart-prod}. This has~\(2(l-1)+(k-2)(l-2)\) edges in \(E_H\). Assume that \(P \in \mH\). Then, there is an automorphism of the graph mapping one Hamiltonian cycles to the another. Since every automorphism of~\(P\) preserves the sets~\(E_H\) and~\(E_G\), by comparing the number of edges in~\(E_H\) we get
	\(2(l-1)+(k-2)(l-2) = 2(l-1).\) For~\(l,k \in \N_{>1}\), solutions only exist for~\(k=2\) or~\(l=2\). Since we assumed~\(G\) and~\(H\) to be Hamiltonian and there are no Hamiltonian graphs on two vertices, this is a contradiction.
\end{proof}

After dealing with relatively prime graphs, the next step towards a classification is to consider the Cartesian product of a prime graph with itself. 

\begin{restatable}{lemma}{lemmacartesianpowers}\label{lem:cart-prod-power}
	Let~\(G\) be a Hamiltonian and prime graph. Then~\(G^{\bx n} \not \in \mH\) for all~\(n \geq 2\).
\end{restatable}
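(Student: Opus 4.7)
The plan is to distinguish two Hamiltonian cycles of $G^{\bx n}$ using an automorphism invariant, namely the multiset of edge counts per coordinate direction. Write $k = |G| \geq 3$ and let $E_i \subseteq E(G^{\bx n})$ denote the set of edges that vary in the $i$-th coordinate, so that $E(G^{\bx n}) = E_1 \dcup \cdots \dcup E_n$. Since $G$ is prime, the uniqueness of the Cartesian prime factorization \cite[Cor.~6.12]{handbookprodgraphs} forces every $\varphi \in \Aut(G^{\bx n})$ to act as $(v_1,\ldots,v_n) \mapsto (\psi_1(v_{\pi(1)}), \ldots, \psi_n(v_{\pi(n)}))$ for some $\pi \in S_n$ and $\psi_1,\ldots,\psi_n \in \Aut(G)$, and hence to permute the sets $E_1,\ldots,E_n$. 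Consequently the multiset $\sigma(C) := \{|C \cap E_i| : 1 \leq i \leq n\}$ is an isomorphism invariant of each Hamiltonian cycle $C$, and it suffices to exhibit two Hamiltonian cycles $C_1, C_2$ with $\sigma(C_1) \neq \sigma(C_2)$.

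The first cycle $C_1$ is a \emph{snake cycle} built by iterating the construction of \cref{lem:prime-not-in-H}: viewing $G^{\bx n}$ as $k$ layers of $G^{\bx (n-1)}$ glued by $2(k-1)$ edges in direction $n$, I fill each layer with a Hamiltonian path obtained from the $(n-1)$-dimensional snake cycle by removing one or two carefully chosen edges in direction $n-1$. A short induction on $n$ then yields the closed form $|C_1 \cap E_1| = k^{n-2}(k^2 - 2k + 2)$, and for $k \geq 3$ this is the maximum entry of $\sigma(C_1)$.

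The second cycle $C_2$ concentrates its edges in direction $1$ by living inside the spanning subgraph $C_G^{\bx n} \cong C_k^{\bx n}$, where $C_G$ is any Hamiltonian cycle of $G$. Since the Cartesian product of Hamiltonian graphs is Hamiltonian, one can fix a Hamiltonian cycle $\Gamma$ of $C_k^{\bx (n-1)}$; its $k^{n-1}$ vertices index the direction-$1$ rows of $C_k^{\bx n}$. I traverse these rows in the cyclic order given by $\Gamma$: inside each row I use a Hamiltonian path of $C_k$, contributing $k-1$ edges to $E_1$, and consecutive rows are linked through a single transition edge in some $E_i$ with $i \geq 2$. Assigning the transition positions in $V(C_k)$ so that the two positions at each row are distinct is possible (it amounts to a proper edge colouring of the cycle $\Gamma$ by elements of $V(C_k)$, and $k \geq 3$ supplies enough colours), and the resulting cycle satisfies $|C_2 \cap E_1| = k^{n-1}(k-1)$.

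A direct calculation gives $k^{n-1}(k-1) - k^{n-2}(k^2 - 2k + 2) = k^{n-2}(k-2) > 0$ for $k \geq 3$, so the maximum of $\sigma(C_2)$ strictly exceeds that of $\sigma(C_1)$ and hence $\sigma(C_1) \neq \sigma(C_2)$. Therefore $C_1$ and $C_2$ lie in different $\Aut(G^{\bx n})$-orbits, showing $G^{\bx n} \notin \mH$. I expect the main technical obstacle to be the careful bookkeeping in the snake recursion needed to justify the closed form for $|C_1 \cap E_1|$---specifically, that the edges removed at each recursion step can be chosen consistently in the current highest direction without disrupting Hamiltonicity of the layers; verifying that $C_2$ closes up as a single cycle rather than several shorter ones is, by contrast, a routine check once the $\Gamma$-colouring has been fixed.
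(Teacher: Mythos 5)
Your opening move coincides with the paper's: both arguments use that, since \(G\) is prime, every automorphism of \(G^{\bx n}\) permutes the direction classes \(E_1,\dots,E_n\), so the multiset \(\sigma(C)=\{|C\cap E_i|\}_i\) is constant on orbits of Hamiltonian cycles. After that the routes genuinely diverge. The paper's two cycles both have exactly \(2(k-1)\) edges in \(E_n\), so their count multisets do not obviously differ; the paper instead forces any automorphism mapping one to the other to preserve \(E_n\), splits it as \((\varphi_{n-1},\alpha)\), and descends by induction, with a separate argument for \(K_3\). You avoid all of that by exhibiting two cycles whose multisets differ outright. I checked your snake bookkeeping: it does work, because the direction-\((n-1)\) class of the \((n-1)\)-dimensional snake contains a path of length \(k-1\ge 2\), and choosing the pivot vertex \(v\) in the interior of that path makes both removed edges lie in direction \(n-1\), so direction \(1\) is never touched; the closed form \(k^{n-2}(k^2-2k+2)\) is correct and is indeed the maximum entry of \(\sigma(C_1)\) (the remaining entries are \(2k^{n-j-1}(k-1)^2\) in direction \(j\in\{2,\dots,n-1\}\) and \(2(k-1)\) in direction \(n\), all strictly smaller for \(k\ge 3\)). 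If both constructions are justified, your argument is arguably cleaner than the paper's induction and needs no special case for \(k=3\).

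The genuine gap is in \(C_2\), and it is not the closing-up issue you flag but the existence of the row paths. A spanning path of \(C_k\) is \(C_k\) minus one edge, so a Hamiltonian path of \(C_k\) between two prescribed vertices exists if and only if those vertices are \emph{adjacent} in \(C_k\); mere distinctness of the entry and exit positions, which is all your proper-edge-colouring condition guarantees, is not enough. For instance, for \(k=5\) there is no spanning path of \(C_5\) from position \(0\) to position \(2\), so the cycle you describe need not exist under your hypothesis. The repair is to assign positions to the edges of \(\Gamma\) along a closed walk of length \(k^{n-1}\) in \(C_k\) (the \(j\)-th edge of \(\Gamma\) gets the \(j\)-th vertex of the walk): such a walk exists because \(k^{n-1}\) has the same parity as \(k\) and is at least \(k\), and it forces consecutive edges of \(\Gamma\) to receive adjacent positions, which is exactly what each row requires. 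With that replacement, and with the snake recursion spelled out as above, your proof goes through.
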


\begin{proof}
	Note that vertices of $G^{\bx n}$ are $n$-tuples of vertices of $G$ and two vertices $u$ and $v$ are adjacent if and only if they only differ in one component $i$ and $\{u_i,v_i\}$ is an edge of~$G$.  
	Let~$E_i$ be the set of edges~\(\{u,v\}\) of \(G^{\bx n}\) where \(u\) and \(v\) differ in their \(i\)-th component.
	Then~${E_1, \dots, E_n}$ is a partition of the edges of~\(G^{\bx n}\). 
	Since \(G\) is prime, every automorphism of $G^{\bx n}$ permutes the sets $E_1, \dots, E_n$ (\cite[Theorem 6.10]{handbookprodgraphs}), i.e., if $e, e' \in E_i$ and the automorphism maps $e$ to~$E_j$, then $e'$ is also mapped to $E_j$.
	
	Let \(k:=|G|\). First, consider the case that~\(k \geq 4\). Our strategy is as follows: We inductively~construct Hamiltonian cycles $C_n$ and \(\hat{C}_n\) in $G^{\bx n}$ (for $n\geq 2$) that cannot be mapped to each other. 
	For this, we also make use of the auxiliary statement that the constructed cycles fulfill $|E(C_n) \cap E_i|\geq 4$ and $|E(\hat{C}_n) \cap E_i|\geq 4$ for all $i \in \{1, \dots, n\}$, where by slight abuse of notation, $E_i$ is also w.r.t.~$n$.
	\begin{figure}
		\centering
		\begin{tikzpicture}[scale=0.65]
			
			%dotted edges
			\foreach \x in {0,1,2,3,4,5} {
				\foreach \y in {0,1,2,3} {
					\draw[gray] (\x,\y) to (\x,\y+1);
				}
			}
			
			% dashed edges
			\foreach \x in {0,1,2,3,4} {
				\foreach \y in {0,1,2,3,4} {
					\draw[gray] (\x,\y) to (\x+1,\y);
				}
			}
			
			% close dashed cycles
			\foreach \y in {0,1,2,3,4} {
				\draw[gray, dashed, bend left=40, looseness=0.5] (0,\y) to (4.9,\y);
			}
			
			% labels G
			\node at (-0.6,0) {\(G_{g_1}\)};
			\node at (-0.6,1) {\(G_{g_2}\)};
			\node at (-0.6,4) {\(G_{g_k}\)};
			
			%draw the edges of the Hamiltonian cycle
			\foreach \x in {0,1,2,3} {
				\foreach \y in {0,1,2,3,4} {
					\draw[blue, very thick] (\x,\y) to (\x+1,\y);
				}
			}
			
			\draw[blue, very thick, bend left=40, looseness=0.5] (0,4) to (4.9,4);
			\draw[blue, very thick] (0,0) to (0,1);
			\draw[blue, very thick] (4,1) to (4,2);
			\draw[blue, very thick] (0,2) to (0,3);
			\draw[blue, very thick] (4,3) to (4,4);
			
			\draw[blue, very thick] (4,0) to (5,0);
			\foreach \y in {0,1,2,3} {
				\draw[blue, very thick] (5,\y) to (5,\y+1);
			}
			
			%vertices
			\foreach \x in {0,1,2,3,4,5} {
				\foreach \y in {0,1,2,3,4} {
					\node [style=vertex] (\x,\y) at (\x,\y) {};
				}
			}
		\end{tikzpicture}
		\hspace{1cm}
		\begin{tikzpicture}[scale=0.65]
			
			%dotted edges
			\foreach \x in {0,1,2,3,4,5} {
				\foreach \y in {0,1,2,3} {
					\draw[gray] (\x,\y) to (\x,\y+1);
				}
			}
			
			% dashed edges
			\foreach \x in {0,1,2,3,4} {
				\foreach \y in {0,1,2,3,4} {
					\draw[gray] (\x,\y) to (\x+1,\y);
				}
			}
			
			% close dashed cycles
			\foreach \y in {0,1,2,3,4} {
				\draw[gray, dashed, bend left=40, looseness=0.5] (0,\y) to (5,\y);
			}
			% labels G
			\node at (-0.6,0) {\(G_{g_1}\)};
			\node at (-0.6,1) {\(G_{g_2}\)};
			\node at (-0.6,4) {\(G_{g_k}\)};
			
			%draw the edges of the Hamiltonian cycle
			\foreach \x in {0,1,3,4} {
				\foreach \y in {0,1,2,3,4} {
					\draw[blue, very thick] (\x,\y) to (\x+1,\y);
				}
			}
			
			\draw[blue, very thick, bend left=40, looseness=0.5] (0,4) to (5,4);
			\draw[blue, very thick] (0,0) to (0,1);
			\draw[blue, very thick] (5,0) to (5,1);
			\draw[blue, very thick] (0,2) to (0,3);
			\draw[blue, very thick] (5,2) to (5,3);
			
			\draw[blue, very thick] (2,0) to (3,0);
			\foreach \y in {1,3} {
				\draw[blue, very thick] (2,\y) to (2,\y+1);
				\draw[blue, very thick] (3,\y) to (3,\y+1);
			}

			%vertices
			\foreach \x in {0,1,2,3,4,5} {
				\foreach \y in {0,1,2,3,4} {
					\node [style=vertex] (\x,\y) at (\x,\y) {};
				}
			}
		\end{tikzpicture}		
		\caption{Illustration of the constructions for the base case in~\cref{lem:cart-prod-power}. The left picture shows the Hamiltonian cycle $C_2$, the right picture the Hamiltonian cycle $\hat{C}_2$ of \(G^{\bx 2}\).}
		\label{fig:ham-cyc-GxH}
	\end{figure}
	
	\begin{figure}
		\centering
		\begin{tikzpicture}[scale=0.65]
			
			% close dashed cycles
			\foreach \y in {0,1,2,3,4} {
				\draw[gray, dashed, bend left=40, looseness=0.5] (0,\y) to (5,\y);
			}
			%normal edges
			\foreach \y in {0,1,2,3,4} {
				\draw[gray] (4,\y) to (5,\y);
			}
			% labels G
			\node at (-0.95,0) {\(C_{n\!-\!1}\)};
			\node at (-0.95,1) {\(C_{n\!-\!1}\)};
			\node at (-0.95,4) {\(C_{n\!-\!1}\)};
			
			%draw the edges of the Hamiltonian cycle
			\foreach \x in {0,1,2,3} {
				\foreach \y in {0,1,2,3,4} {
					\draw[blue, very thick] (\x,\y) to (\x+1,\y);
				}
			}			
			\draw[blue, very thick, bend left=40, looseness=0.5] (0,4) to (5,4);
			\draw[blue, very thick] (0,0) to (0,1);
			\draw[blue, very thick] (0,2) to (0,3);
			\draw[blue, very thick] (4,3) to (4,4);
			\draw[blue, very thick] (4,1) to (4,2);
			\draw[blue, very thick] (4,0) to (5,0);
			\foreach \y in {0,1,2,3} {
				\draw[blue, very thick] (5,\y) to (5,\y+1);
			}	
			
			%vertices
			\foreach \y in {0,1,2,3,4} {
				\node [circle,draw, inner sep=0.75pt, fill=white] (0,\y) at (0,\y) {\footnotesize \(u\)};
				\node [circle,draw, inner sep=0.6pt, fill=white] (4,\y) at (4,\y) {\footnotesize \(w\)};
				\node [circle,draw, inner sep=0.75pt, fill=white] (5,\y) at (5,\y) {\footnotesize \(v\)};
			}
			\foreach \x in {1,2,3} {
				\foreach \y in {0,1,2,3,4} {
					\node [style=vertex] (\x,\y) at (\x,\y) {};
				}
			}
		\end{tikzpicture}
		\caption{Illustration of the induction step in~\cref{lem:cart-prod-power}. The Hamiltonian cycle~\(C_n\) of \(G^{\bx n}\) constructed from the Hamiltonian cycle~\(C_{n-1}\) of~\(G^{\bx n-1}\), here depicted for \(|C_{n-1}|=5\).}
		\label{fig:ham-cyc-GxH_ind}
	\end{figure}
	We start with the base case, i.e., $n=2$.
	For simplicity, we call the edges in~\(E_1\) and~\(E_2\) \textit{vertical} and \textit{horizontal} edges, respectively. Let~\((g_1, \dots, g_k)\) be a Hamiltonian cycle of~\(G\).
	From this cycle, we can construct the two Hamiltonian cycles~\(C_2\) and~\(\hat{C}_2\) of~\(G^{\bx 2}\) depicted in~\cref{fig:ham-cyc-GxH}. (Note that the constructions slightly change depending on the parity of \(|G|\), but the following arguments hold for both.)
	For the sake of contradiction, assume there is an automorphism~\({\varphi \in \Aut(G \bx G)}\) mapping~\(C_2\) to~\(\hat{C}_2\). Since every vertical edge in~\(\hat{C}_2\) is preceded and followed by horizontal edges (here we use $k\geq 4$), but all horizontal edges in~\(C_2\) are either preceded or followed by another horizontal edge,~\(\varphi\) cannot swap the sets~\(E_1\) and~\(E_2\), so it preserves each. But this leads to a contradiction because there are~\(k-1\) consecutive vertical edges in~\(C_2\), but not in~\(\hat{C}_2\).
	Also, note that both cycles contain at least $2(k-1)\geq 4$ edges from $E_1$ and from $E_2$ so that this completes the base case. 
	
	For the induction step, let~\(n \geq 3\).
	Let~\(u,v\) and~\(w\) be three consecutive vertices of~\(C_{n-1}\). Let~\(C_n\) be the cycle obtained from~\(C_{n-1}\) using~\(u,v\), and~\(w\) as depicted in~\cref{fig:ham-cyc-GxH_ind}. 
	Note here that the vertical edges in \cref{fig:ham-cyc-GxH_ind} are precisely the edges in $E_n$.
	Analogously, construct~\(\hat{C}_n\) from~\(\hat{C}_{n-1}\) and three consecutive vertices~\(\hat{u},\hat{v}\) and~\(\hat{w}\) in~\(\hat{C}_{n-1}\). 
	Note that 
	\begin{equation}\label{eq:IntersectionEn}
		|E(C_n) \cap E_n| = 2(k-1)\geq 4.
	\end{equation}
	Further, since $C_n$ contains $k$ copies of the cycle $C_{n-1}$ with one or two edges removed and, by the induction hypothesis, $C_{n-1}$ uses at least 4 edges from $E_i$, we~have
	\begin{equation}\label{eq:IntersectionEi}
		|E(C_n) \cap E_i| \geq 2k \geq 4
	\end{equation}
	for all~\(i \in \{1, \dots, n-1\}\). 
	We obtain analogs of the equations \eqref{eq:IntersectionEn} and \eqref{eq:IntersectionEi} for the cycle~$\hat{C}_n$.
	It remains to prove that the cycles cannot be mapped to each other.
	Assume there is an automorphism~\({\varphi_n \in \Aut(G^{\bx n})}\) mapping~\(C_n\) to~\(\hat{C}_n\).  
	Using \eqref{eq:IntersectionEn} and \eqref{eq:IntersectionEi}, we obtain that~	$|E(C_n) \cap E_n|=2(k-1)<2k\leq |E(\hat{C}_n) \cap E_i|$ for all $i\neq n$, so that~\(\varphi\) has to preserve the set~\(E_n\).  
	Thus,~\(\varphi_n\) is of the form~\((\varphi_{n-1}, \alpha)\) for some~\(\varphi_{n-1} \in \Aut(G^{\bx n-1})\) and~\(\alpha \in \Aut(G)\).
	Note that both cycles \(C_n\) and \(\hat{C}_n\) contain precisely one path of edges in $E_n$ of length $k-1$, so that $\varphi$ maps one to the other, i.e., the path \(((v,g_1), \dots, (v,g_k))\) in \(C_n\) is mapped to the path~\(((\hat{v},g_1), \dots, (\hat{v},g_k))\) in~\(\hat{C}_n\). 
	Hence,~\(\alpha\) either maps~\(g_1\) to~\(g_1\) or, in case~\(\alpha\) reverses the order of the path, to~\(g_k\). In both cases, \(\varphi_{n-1}\) maps~\(C_{n-1}\) to~\(\hat{C}_{n-1}\), which contradicts the induction hypothesis.
	Therefore, \(C_n\) cannot be mapped to \(\hat{C}_n\) by an automorphism. This completes
	the proof in case $k\geq 4$.
	
	Next, assume~\(k=3\). Since~\(G\) is Hamiltonian, this implies~\(G=K_3\). In the following, we refine the argument for \(|G| \geq 4\).
	First, consider the Hamiltonian cycles~\(C_2\) and~\(\hat{C}_2\) of~\(K_3^{\bx 2}\) depicted in~\cref{fig:K3-powers}. They cannot be mapped to each other by automorphisms of~\(K_3^{\bx 2}\), since~\(C_2\) has three horizontal edges, while~\(\hat{C}_2\) has at least four edges in each direction. This shows that~\(K_3^{\bx 2} \not \in \mH\).
	
	Now, let~\(n \geq 3\). Consider the Hamiltonian cycles~\(C_n\) and~\(\hat{C}_n\) of~\(K_3^{\bx n}\) obtained from~\(C_{n-1}\) and~\(\hat{C}_{n-1}\) as before. Then,~\(C_n\) and~\(\hat{C}_n\) again have precisely four edges in~\(E_n\). Further, they have more than four edges in all~\(E_i\) for~\(i < n\):
	From the three copies of \(C_{n-1}\) (resp. \(\hat{C}_{n-1}\)) in the construction of \(C_n\) (resp. \(\hat{C}_{n}\)), we in total remove each of the edges \(\{v,w\}\) and \(\{u,v\}\) twice. Thus, for \(\hat{C}_{n-1}\), we have at least \(3 \cdot 4 - 4 = 8\) edges in each \(E_i\). For \(C_n\), note that there are no two consecutive horizontal edges in \(C_2\) and thus, \(C_3\) contains at least \(3 \cdot 3 - 2 = 7\) edges in each direction. Hence, \(C_n\) contains at least 7 edges in each \(E_i\). With this, the proof can be carried out as in the case $|G|\geq 4$.
\end{proof}

\begin{figure}
	\centering
	\begin{tikzpicture}
		% Draw vertices as filled circles
		\foreach \x in {0,1,2}{
			\foreach \y in {0,1,2}{
				\node[style=vertex] (\x\y) at (\x,\y) {};
			}
		}
		% Draw vertices and edges
		\foreach \x in {0,1,2}{
			\foreach \y in {0,1,2}{
				% Draw edges along x-axis
				\ifnum\x<2
				\draw[gray] (\x\y) -- (\the\numexpr\x+1\relax\y);
				\fi
				\ifnum\x>1
				\draw[gray, bend right] (2\y) to (0\y);
				\fi
				% Draw edges along y-axis
				\ifnum\y<2
				\draw[gray] (\x\y) -- (\x\the\numexpr\y+1\relax);
				\fi
				\ifnum\y>1
				\draw[gray, bend right] (\x0) to (\x2);
				\fi
			}
		}
		%Hamiltonian cycle
		\draw[blue, very thick] (00) to (01);
		\draw[blue, very thick] (01) to (02);
		\draw[blue, very thick, bend left] (02) to (22);
		\draw[blue, very thick, bend left] (22) to (20);
		\draw[blue, very thick] (20) to (21);
		\draw[blue, very thick] (21) to (11);
		\draw[blue, very thick] (11) to (12);
		\draw[blue, very thick, bend left] (12) to (10);
		\draw[blue, very thick] (10) to (00);
	\end{tikzpicture}
	\hspace{1cm}
	\begin{tikzpicture}
		% Draw vertices as filled circles
		\foreach \x in {0,1,2}{
			\foreach \y in {0,1,2}{
				\node[style=vertex] (\x\y) at (\x,\y) {};
			}
		}
		
		% Draw vertices and edges
		\foreach \x in {0,1,2}{
			\foreach \y in {0,1,2}{
				% Draw edges along x-axis
				\ifnum\x<2
				\draw[gray] (\x\y) -- (\the\numexpr\x+1\relax\y);
				\fi
				\ifnum\x>1
				\draw[gray, bend right] (2\y) to (0\y);
				\fi
				% Draw edges along y-axis
				\ifnum\y<2
				\draw[gray] (\x\y) -- (\x\the\numexpr\y+1\relax);
				\fi
				\ifnum\y>1
				\draw[gray, bend right] (\x0) to (\x2);
				\fi
			}
		}
		
		%Hamiltonian cycle
		\draw[blue, very thick] (00) to (01);
		\draw[blue, very thick] (01) to (02);
		\draw[blue, very thick, bend left] (02) to (22);
		\draw[blue, very thick] (22) to (12);
		\draw[blue, very thick] (12) to (11);
		\draw[blue, very thick] (11) to (21);
		\draw[blue, very thick] (21) to (20);
		
		\draw[blue, very thick] (20) to (10);
		\draw[blue, very thick] (10) to (00);
	\end{tikzpicture}
	\caption{Two Hamiltonian cycles of~\(K_3^{\bx 2}\). The Hamiltonian cycle~\(C_2\) on the left has only three horizontal edges, while the cycle~\(\hat{C}_2\) on the right has at least four edges in every direction.}
	\label{fig:K3-powers}
\end{figure}
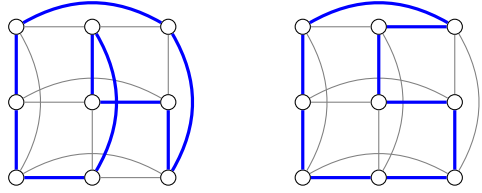 

\cref{thm:cart-prod} follows now immediately by combining \cref{lem:prime-not-in-H} and \cref{lem:cart-prod-power}.
In case a graph decomposes into a Cartesian product of Hamiltonian graphs, we can deduce the following characterization of Hamiltonian-transitive graphs.

\begin{restatable}{corollary}{corcartesiandecomposition}\label{cor:cart-prod-decomp}
	Let~\(G\) be a graph with decomposition~\(G=H_1^{\bx r_1}  \bx \dots \bx H_k^{\bx r_k}\) for pairwise distinct prime graphs \(H_1, \dots, H_k\). Assume that \(H_1,\dots,H_k \) are Hamiltonian. Then \(G \in \mH\) if and only if~\(G=H_1 \in \mH\).
\end{restatable}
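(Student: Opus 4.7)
The plan is to derive the corollary directly from \cref{thm:cart-prod} by a short case distinction on the shape of the prime factorization, using two standard facts: Cartesian products of Hamiltonian graphs are Hamiltonian, and the prime factorization in~\eqref{eq:prime-factorization} is unique up to reordering.

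The backward direction is immediate: if $G = H_1 \in \mH$, i.e., $k=1$ and $r_1=1$, there is nothing to show. For the forward direction, suppose $G \in \mH$. I would first handle the case $k \geq 2$. Write
\[
G = H_1^{\bx r_1} \bx \bigl(H_2^{\bx r_2} \bx \dots \bx H_k^{\bx r_k}\bigr),
\]
and note that the two factors on the right share no prime factor (by uniqueness of the prime decomposition) and are both Hamiltonian (since each $H_i$ is Hamiltonian and the Cartesian product of Hamiltonian graphs is again Hamiltonian). The first bullet of \cref{thm:cart-prod} then yields $G \notin \mH$, a contradiction. Hence $k=1$, so $G = H_1^{\bx r_1}$.

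In this remaining case, $H_1$ is prime and Hamiltonian, so the second bullet of \cref{thm:cart-prod} applies and gives $G \in \mH$ if and only if $r_1 = 1$ and $H_1 \in \mH$. Combining both directions yields $G \in \mH \iff G = H_1 \in \mH$.

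There is essentially no obstacle here: the entire argument rides on \cref{thm:cart-prod}, and the only subtlety to mention explicitly is that iterated Cartesian products of Hamiltonian graphs remain Hamiltonian, so that we may legitimately group the factors to invoke the relatively prime case.
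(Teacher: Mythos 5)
Your proposal is correct and follows essentially the same route as the paper: first use the relatively prime case of \cref{thm:cart-prod} (i.e., \cref{lem:prime-not-in-H}) to force $k=1$, then the prime-power case (\cref{lem:cart-prod-power}) to force $r_1=1$. You are merely a bit more explicit than the paper about grouping the factors $H_2^{\bx r_2} \bx \dots \bx H_k^{\bx r_k}$ into a single Hamiltonian factor relatively prime to $H_1^{\bx r_1}$, which is a welcome clarification but not a different argument.
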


\begin{proof}
	Assume \(G \in \mH\). Then,~\cref{lem:prime-not-in-H} implies that~\(G=H_1^{\bx r_1}\). Further, \cref{lem:cart-prod-power} yields that~\(r_1=1\) and hence,~\(G=H_1\).
\end{proof}

%%%%%%%%%%%%%%%%%%%%%%%%%%%%%%%%%%
%%% cartesian products with K_2
%%%%%%%%%%%%%%%%%%%%%%%%%%%%%%%%%%

\subsection{Cartesian products with \(K_2\)}\label{sec:cartesian-K2}

In the previous subsection, we assumed that all factors of the Cartesian product are Hamiltonian to ensure that the obtained graph is Hamiltonian as well.
In this subsection, we extend our study to Cartesian products with $K_2$, which later turns out to be a crucial case for our work on Cayley graphs. More precisely, we prove the following result.

\begin{theorem}\label{thm:cartesian-product-K2}
	Let~\(G\) be a Hamiltonian graph. Then~\(G \bx K_2 \in \mH\) if and only if~\(G = C_k\) for~\(k \geq 3\) odd or~$G=C_4$.
\end{theorem}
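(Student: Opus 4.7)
The plan is to prove both directions. For the backward direction, I would classify Hamilton cycles of $C_k \bx K_2$ for odd $k \geq 3$ by parametrizing them via the set $S \subseteq V(C_k)$ of positions where the vertical edge is used. A local analysis shows that maximal consecutive runs in $S$ must have even length: a position $v \notin S$ forces both of its incident horizontal edges to be used on each layer, and this propagates constraints throughout each block of $S$. For $k$ odd this rules out $|S| > 2$, leaving $k$ Hamilton cycles that are all equivalent under the rotation automorphism of $C_k$. For $G = C_4$, we have $C_4 \bx K_2 = K_2^{\bx 3}$ (the 3-cube), whose automorphism group $S_3 \ltimes \Z_2^3$ acts transitively on its 6 Hamilton cycles.

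For the forward direction, I assume $G \bx K_2 \in \mH$ and split based on whether $K_2$ is a Cartesian factor of $G$. In Case A ($K_2$ is not a factor, so $G$ and $K_2$ are relatively prime), the standard structure theorem for automorphisms of Cartesian products of relatively prime graphs gives that every automorphism of $G \bx K_2$ is of the form $(\varphi_G, \sigma) \in \Aut(G) \times \Aut(K_2)$ and preserves the set of vertical edges; hence $|S|$ is an automorphism invariant. The canonical Hamilton cycle obtained from a Hamilton cycle of $G$ has $|S| = 2$. If $G = C_k$ with even $k \geq 6$ (the case $k = 4$ falls into Case B), I construct an additional Hamilton cycle with $|S| = k$ by placing $C_k$'s two perfect matchings on the two layers together with all $k$ vertical edges. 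If $G$ is not a cycle, $G$ has a chord, and I construct a Hamilton cycle with $|S| \geq 4$ by exploiting multiple local choices at a vertex of degree $\geq 3$: choosing asymmetrically on the two layers yields two path decompositions of $V(G)$ with the same endpoint set $S$ but shuffled endpoint pairings, so the bipartite matching induced by vertical edges forms a single cycle instead of two disjoint ones. Either way we obtain a Hamilton cycle with $|S| \neq 2$, contradicting $G \bx K_2 \in \mH$.

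In Case B, write $G = K_2^{\bx m} \bx G_0$ with $G_0$ having no $K_2$ prime factor and $m \geq 1$, so $G \bx K_2 = K_2^{\bx (m+1)} \bx G_0$. If $G_0$ is trivial, $G = K_2^{\bx m}$ is Hamiltonian iff $m \geq 2$: for $m = 2$ we obtain $G = C_4$ and $G \bx K_2 = K_2^{\bx 3} \in \mH$, while for $m \geq 3$ I exhibit Hamilton cycles of $K_2^{\bx (m+1)}$ with different multisets of edge counts across the $m+1$ symmetric $K_2$ directions (comparing, e.g., a reflected Gray code with a balanced Hamilton cycle) to conclude $\notin \mH$. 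If $G_0$ is non-trivial and Hamiltonian, \cref{thm:cart-prod} applied to the relatively prime Hamiltonian pair $(K_2^{\bx (m+1)}, G_0)$ gives $G \bx K_2 \notin \mH$. The remaining subcase $G_0$ non-trivial non-Hamiltonian is handled by varying the zigzag direction across the symmetric $K_2$ factors to produce Hamilton cycles with distinct direction signatures. I anticipate the main obstacle to be the non-cycle subcase of Case A: showing that the asymmetric top/bottom construction always yields a single Hamilton cycle (rather than multiple disjoint cycles) for every Hamiltonian non-cycle $G$ will likely require reducing to cycle-plus-chord graphs and carefully analyzing the possible endpoint pairings of top/bottom path decompositions.
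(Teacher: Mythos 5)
Your backward direction and the even-cycle subcase of Case A are sound; in fact the argument that the vertical-edge count \(|S|\) is an automorphism invariant whenever \(G\) and \(K_2\) are relatively prime, combined with the all-rungs cycle (\(|S|=k\)) versus the canonical cycle (\(|S|=2\)), is a clean route for \emph{every} Hamiltonian \(G\) of even order, cycle or not. The genuine gap is exactly where you suspect it, and it is worse than you anticipate: for \(G\) Hamiltonian, not a cycle, and of \emph{odd} order, a Hamiltonian cycle of \(G\bx K_2\) with \(|S|\geq 4\) need not exist when \(G\) is a cycle plus a single chord. A Hamiltonian cycle with exactly four rungs at positions \(\{a,b,c,d\}\) requires two decompositions of \(V(G)\) into two disjoint spanning paths with endpoint set \(\{a,b,c,d\}\) whose endpoint pairings are \emph{crossing} (their union is a \(4\)-cycle on those four vertices). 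Take \(G=C_9\) plus the single chord \(\{v_0,v_4\}\): since every degree-two vertex that is not a path endpoint forces both its incident cycle edges into its path, the segments \(v_1v_2v_3\) and \(v_5v_6v_7v_8\) are extremely rigid, and enumerating the \(2\)-path decompositions shows that any two with the same endpoint set carry the \emph{same} pairing. So the proposed asymmetric top/bottom construction always closes into two disjoint cycles rather than one, and the invariant \(|S|\) cannot separate anything. The candidate constructions you would naturally write down only succeed when the chord sits at cycle-distance \(2\) or \(3\) from one of its ends, so ``reducing to cycle-plus-chord graphs'' does not rescue the plan.

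This is precisely why the paper does not argue via a vertical-edge count in the odd non-cycle case: \cref{lem:cartesian-K2_odd_noncycle} first \emph{uses} the hypothesis \(G\bx K_2\in\mH\) (through the rotated cycles \(C^{(i)}\) and the ``symmetry edge'' invariant) to force an entire orbit of chords \(\{v_i,v_{i+j}\}\) into \(G\), and only then constructs a second Hamiltonian cycle exploiting those forced edges; the even case is likewise handled by a distance-three adjacency pattern along the cycle rather than by \(|S|\). Some two-stage argument of this kind, or a finer automorphism invariant than \(|S|\), appears unavoidable for odd \(|G|\). Separately, in Case B your treatment of non-trivial non-Hamiltonian \(G_0\) (``varying the zigzag direction'') is only a slogan and would need an actual construction, and the hypercube subcase needs a concrete pair of Gray codes with distinct transition-count multisets for every dimension at least four; both are fillable, but the odd non-cycle subcase of Case A is a real obstruction to the approach as proposed.
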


Throughout this section, we let $V(G):=\{v_0, \dots, v_{k-1}\}$ such that $(v_0, \dots, v_{k-1})$ is a Hamiltonian cycle of~$G$. 
We denote the Cartesian product with \(K_2\) by taking an isomorphic copy of~$G$ on vertices $\{u_0, \dots, u_{k-1}\}$ and letting~$v_i$ neighboring~$u_i$ for all $i \in \{0, \dots, k-1\}$ (see \cref{fig:prism}).
We call the subgraph induced by $\{v_0, \dots, v_{k-1}\}$ the \emph{outer layer} and the subgraph induced by $\{u_0, \dots, u_{k-1}\}$ the \emph{inner layer}.

We split the proof of \cref{thm:cartesian-product-K2} into several lemmata.
First, we show that, for~$G=C_k$ with~$k$ odd, the Cartesian product~$G \bx K_2$ is indeed Hamiltonian-transitive.

\begin{restatable}{lemma}{lemmaoddcycleprism}\label{lem:oddcycle_prism}
	Let $k\geq 3$ be odd. Then $C_k \bx K_2 \in \mH$.
\end{restatable}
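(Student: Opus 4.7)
My approach is to fully classify the Hamiltonian cycles of $C_k\bx K_2$ for odd $k$ and then apply a rotational symmetry. I would partition the edges of the prism into \emph{outer} edges $\{v_i,v_{i+1}\}$, \emph{inner} edges $\{u_i,u_{i+1}\}$, and \emph{rungs} $\{v_i,u_i\}$. For any Hamiltonian cycle $C$, let $R\subset\{0,\dots,k-1\}$ be the set of indices $i$ for which the rung $\{v_i,u_i\}$ lies in $C$; since $C$ is a cycle, $|R|$ must be even. Write $R=\{i_1<i_2<\dots<i_{2m}\}$ in cyclic order, and for each $j$ (mod $2m$) let $\ell_j$ denote the length of the outer arc from $v_{i_j}$ to $v_{i_{j+1}}$, so $\sum_j\ell_j=k$.

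Next I would exploit the local degree constraint: every vertex has degree $3$ in $C_k\bx K_2$ but degree $2$ in $C$, so each $v_i$ with $i\notin R$ must use both of its outer edges, and likewise for the inner layer. This gives a dichotomy for each gap: whenever $\ell_j>1$, the entire outer arc \emph{and} the entire inner arc between $v_{i_j}$ and $v_{i_{j+1}}$ lie in $C$ together with the two bounding rungs; when $\ell_j=1$, the single outer edge and single inner edge between the two adjacent rungs are optional.

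The crux of the argument, and the step I expect to be the main obstacle, is showing that a single gap with $\ell_j>1$ already forces $|R|=2$. When $\ell_j>1$, the two long arcs plus the two bounding rungs form a closed sub-walk $W_j$ of $C$ with the property that at each of its vertices both of the $C$-edges already belong to $W_j$. Hence $W_j$ is a full connected component of $C$, and since a Hamiltonian cycle is connected we must have $W_j=C$. A vertex count then shows this is possible only when $|R|=2$ and the two rungs sit at adjacent positions (so the complementary arc has length $1$ and contributes no edges to $C$). The remaining case is $\ell_j=1$ for every $j$, i.e., $R=\{0,\dots,k-1\}$; here the outer edges used by $C$ would constitute a perfect matching of $C_k$, which is impossible for odd $k$.

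It follows that every Hamiltonian cycle of $C_k\bx K_2$ consists of one pair of adjacent rungs together with the long outer arc and the long inner arc between them. There are exactly $k$ such cycles, one for each cyclic position of the adjacent rung pair, and the rotational automorphism $v_i\mapsto v_{i+1}$, $u_i\mapsto u_{i+1}$ (indices mod $k$) permutes them transitively. This yields $C_k\bx K_2\in\mH$.
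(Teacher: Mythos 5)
Your argument is correct and follows essentially the same route as the paper's proof: both analyze which rungs $\{v_i,u_i\}$ a Hamiltonian cycle uses via the degree-$3$ constraint together with the parity of $k$, conclude that exactly two adjacent rungs occur, and finish by applying the rotational automorphism. The only (very minor) omission is the degenerate case $R=\emptyset$, which your own degree argument already rules out since it would force the entire outer $k$-cycle to be a proper subcycle of $C$; the paper likewise just asserts that at least two rungs are used.
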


\begin{proof}
	Consider the cut-set $F=\bigl\{\{u_i, v_i\}: i \in \{0,\dots, k-1\}\bigl\}$. 
	We claim that every Hamiltonian cycle~$C$ in the~$k$-prism uses precisely two edges of $F$, and they are consecutive (i.e., $\{u_i, v_i\}$ and $\{u_{j}, v_{j}\}$ for $i,j \in \{0,\dots, k-1\}$ with $j=i+1 \bmod k$).
	To see this, observe that the number of edges in~$F$ that~$C$ uses is even and at least~2.
	Since~$|F|=k$ and~$k$ is odd, there is an edge in~$F$ not used by~$C$. In particular, there is an edge $\{u_i, v_i\} \in E(C)$ such that $\{u_j, v_j\} \notin E(C)$ (where $j=i+1 \bmod k$ and $i,j \in \{0, \dots, k-1\}$).
	Without loss of generality, assume that~$i=0$ and~$j=1$.
	Since every vertex has degree 3 in the prism and degree 2 in $C$, the fact that~$C$ does not contain $(u_1, v_1)$ implies that $C$ contains~$\{u_0, u_1\}, \{u_1, u_2\}, \{v_0, v_1\}, \{v_1, v_2\}$ (see \cref{fig:prism} for illustration).
	Now, if $k>3$, $C$ cannot contain $\{u_2, v_2\}$ because then $C=(u_0,u_1, u_2, v_2, v_1,v_0)$, i.e., $C$ does not contain all of the vertices. 
	Therefore, $C$ also contains $\{u_2, u_3\}, \{v_2, v_3\}$.
	Repeating this argument shows that the only edges in $F$ contained in $C$ are $\{u_0, v_0\}$ and $\{u_{k-1}, v_{k-1}\}$. This completes the proof of the claim.
	In particular, it shows that every Hamiltonian cycle in the $k$-prism can be mapped to the cycle~$(v_0, v_1, \dots, v_{k-1}, u_{k-1}, \dots, u_0)$ via rotation, which is obviously an automorphism. Therefore, the $k$-prism is in $\mH$ when $k$ is odd.
\end{proof}

\begin{figure}
	\centering
	\begin{tikzpicture}[scale=0.8]
		% Vertices of inner cycle
		\foreach \i in {1,...,7} {
			\node[circle,draw, inner sep=0.5pt] (A\i) at (360/7 * \i:1.1) {\footnotesize{$u_{\the\numexpr\i-1\relax}$}};
		}
		% Vertices of outer cycle
		\foreach \i in {1,...,7} {
			\node[circle, draw, inner sep=0.5pt] (B\i) at (360/7 * \i:2) {\footnotesize{$v_{\the\numexpr\i-1\relax}$}};
		}
		% Edges of the innter cycle
		\foreach \i in {1,...,7} {
			\pgfmathtruncatemacro{\nexti}{mod(\i,7)+1}
			\draw[gray] (A\i) -- (A\nexti);
		}
		% Edges of the outer cycle
		\foreach \i in {1,...,7} {
			\pgfmathtruncatemacro{\nexti}{mod(\i,7)+1}
			\draw[gray] (B\i) -- (B\nexti);
		}
		% Connecting the two cycles
		\foreach \i in {1,...,7} {
			\draw[gray] (A\i) -- (B\i);
		}
		%the Hamiltonian cycle
		\draw[very thick, blue] (A1) to (A2) to (A3) to (A4) to (A5) to (A6) to (A7) to (B7) to (B6) to (B5) to (B4) to (B3) to (B2) to (B1) to (A1);
	\end{tikzpicture}
	\caption{The up to symmetry unique Hamiltonian cycle in $C_7 \bx K_2$.}\label{fig:prism}
\end{figure}
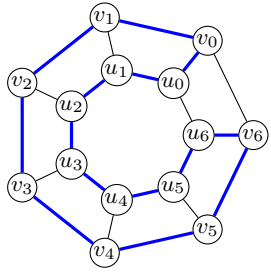

In the next two lemmata, we argue that in all other cases (where $|G|\geq 5$), the Cartesian product $G \bx K_2$ is not in $\mH$.
The difficulty is that we cannot assume any non-adjacency between vertices of $G$. More precisely, we only use that $C_k \bx K_2$ is a subgraph of $G \bx K_2$ and that $u_i$ is non-adjacent to $v_j$ if $i \neq j$.
First, we settle the case when $|G|$ is even.

\begin{restatable}{lemma}{lemmacartesianktwoeven}\label{lem:cartesian-K2-even}
	Let $G$ be Hamiltonian with $k:=|G|\geq 6$ even. Then $G \bx K_2 \notin \mH$.
\end{restatable}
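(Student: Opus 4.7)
The plan is to exhibit two Hamiltonian cycles $C_1, C_2$ of $G \bx K_2$ whose intersections with the cut $F := \{\{v_i, u_i\} : 0 \le i \le k-1\}$ have sizes $2$ and $k$ respectively, and then to argue that no automorphism of $G \bx K_2$ maps $C_1$ to $C_2$. Since $G$ contains the Hamiltonian cycle $(v_0, \dots, v_{k-1})$, the subgraph $C_k \bx K_2$ sits inside $G \bx K_2$, and within it I define $C_1$ as the ``sleeve'' $(v_0, v_1, \dots, v_{k-1}, u_{k-1}, u_{k-2}, \dots, u_0)$, which uses exactly the two $F$-edges $\{v_0, u_0\}$ and $\{v_{k-1}, u_{k-1}\}$; and $C_2$ as the ``zig-zag'' $(v_0, u_0, u_1, v_1, v_2, u_2, u_3, v_3, \dots, v_{k-2}, u_{k-2}, u_{k-1}, v_{k-1})$, which uses all $k$ edges of $F$. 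The fact that $k$ is even guarantees that the zig-zag closes into a valid Hamiltonian cycle, and both cycles live inside $C_k \bx K_2 \subseteq G \bx K_2$.

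To distinguish $C_1$ from $C_2$, I would show that $|C \cap F|$ is invariant under every automorphism of $G \bx K_2$, from which $|C_1 \cap F| = 2 \neq k = |C_2 \cap F|$ immediately yields the desired contradiction. The key structural claim is that $F$ is characterised, up to the layer-swap, as the perfect matching whose removal disconnects $G \bx K_2$ into two components each isomorphic to $G$, paired canonically by the matching; this uses the non-adjacency of $u_i$ and $v_j$ for $i \neq j$ to pin down the fibre structure.

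The main obstacle is the case in which $G$ itself admits $K_2$ as a prime Cartesian factor, say $G = G' \bx K_2$. Then $G \bx K_2 = G' \bx K_2 \bx K_2$ contains an in-layer perfect matching playing a structural role similar to $F$, and an automorphism may swap the two $K_2$-directions, so $F$ need not be preserved setwise. To address this, I would either combine $F$ with its sibling matching into a single invariant and verify the combined count still separates $C_1$ and $C_2$, or fall back on a more robust invariant: the number of $4$-cycles of $G \bx K_2$ sharing at least three edges with the Hamiltonian cycle in question. Restricted to the prism $4$-cycles $\{v_i, v_{i+1}, u_{i+1}, u_i\}$, this count is $k$ for $C_2$ (every prism $4$-cycle shares three edges with $C_2$) but only $2$ for $C_1$ (attained at $i = 0$ and $i = k-2$); the delicate technical step is then to bound the contribution of any further $4$-cycles that arise from additional edges of $G$, which requires $k \geq 6$ to have enough ``room'' in the zig-zag for the invariants to separate.
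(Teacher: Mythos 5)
Your two cycles are exactly the paper's (its $C_1$ is your zig-zag, its $C_2$ your sleeve), but neither of your proposed distinguishing invariants survives scrutiny, and the gap is not just in the "delicate" steps you flag — both fallbacks genuinely fail on concrete graphs. For the cut-count approach: when $K_2$ is a prime factor of $G$, the union of all $K_2$-fibre classes is indeed the right automorphism-invariant edge set, but it need not separate the two cycles. Take $G$ to be a hypercube $Q_r$ with $k=2^r\geq 8$: then $G\bx K_2=Q_{r+1}$, \emph{every} edge lies in some $K_2$-fibre class, and both Hamiltonian cycles trivially meet the invariant set in all $2k$ of their edges. For the $4$-cycle count: a $4$-cycle sharing three edges with a Hamiltonian cycle of length $>4$ is the same thing as three consecutive cycle edges whose endpoints are adjacent, so your count for the sleeve is $2s+2$ where $s=|\{0\leq i\leq k-4:\{v_i,v_{i+3}\}\in E(G)\}|$, while for the zig-zag it is exactly $k$ (independently of $G$). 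These coincide whenever $s=k/2-1$; e.g.\ for $k=6$ and $G=C_6$ plus the two chords $\{v_0,v_3\}$ and $\{v_1,v_4\}$, both counts equal $6$. So no single numerical invariant of the pair $(C_1,C_2)$ that you propose settles the lemma.

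The paper's proof shows why: it does not distinguish the sleeve from the zig-zag directly, but uses the \emph{per-vertex} refinement of your $4$-cycle count — property $(*)$: every vertex is adjacent to exactly one of its two distance-$3$ neighbours on the cycle — together with a \emph{third} cycle. The zig-zag always has $(*)$; if $G\bx K_2\in\mH$ then the sleeve must have $(*)$ too, and applying $(*)$ at $v_0$ (whose distance-$3$ neighbours on the sleeve are the non-adjacent $u_2$ and the vertex $v_3$) forces the chord $\{v_0,v_3\}\in E(G)$; but then the rotated sleeve $C_3=(u_{k-1},u_0,\dots,u_{k-2},v_{k-2},\dots,v_0,v_{k-1})$ violates $(*)$ at $v_0$, which is now adjacent to both of its distance-$3$ neighbours $u_0$ and $v_3$. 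Note this argument never needs to decide whether the matching $F$ is preserved, which is precisely the obstruction you were fighting. If you want to salvage your plan, you would need to upgrade from the aggregate count to this local property and accept that a third cycle enters the argument; as written, the proof does not go through.
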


\begin{proof}
	Recall that we order the vertices of $G$ such that $(v_0, \dots, v_{k-1})$ is a Hamiltonian cycle of $G$.
	We construct two Hamiltonian cycles in $G \bx K_2$ and show that there is no automorphism mapping them to each other.
	We set
	\begin{align*}
		C_1 &= (u_0, u_1, v_1, v_2, u_2, u_3, v_3, v_4, \dots, u_{k-2}, u_{k-1}, v_{k-1}, v_0)\\
		C_2 &= (u_0, \dots, u_{k-1}, v_{k-1}, \dots, v_0).
	\end{align*}
	The cycle $C_1$ is illustrated on the left side of \cref{fig:cartesian-K2-even} and $C_2$ in the middle of \cref{fig:cartesian-K2-even}.
	Note that $C_2$ is the same cycle-construction as in \cref{lem:oddcycle_prism} and $C_1$ uses that~$k$ is even.
	
	Now, observe that the cycle $C_1$ has the following property~$(*)$: 
	For every vertex~$w$, let~$w_1, w_2$ denote the two vertices that have distance 3 on the cycle to $w$. 
	Then, in $G \bx K_2$, $w$ is adjacent to exactly one of $w_1, w_2$.
	Note that this property is preserved by an automorphism.
	
	Assume for the sake of contradiction that $G \bx K_2 \in \mH$. Then $C_2$ also has property~$(*)$.
	Note that $v_0$ and $u_2$ have distance 3 on the cycle but are not adjacent. By property $(*)$, it follows that there is an edge between $v_0$ and $v_3$ (since $k>4$, we have $v_3 \neq v_{k-1}$ so that this edge is not already contained in $C_2$).
	Now, consider the cycle
	\begin{equation*}
		C_3 = (u_{k-1}, u_0, \dots, u_{k-2}, v_{k-2}, \dots, v_0, v_{k-1}),
	\end{equation*}
	which is illustrated on the right side of \cref{fig:cartesian-K2-even}.
	Note that, in $C_3$, the two vertices at distance~3 to~$v_0$ are~$u_0$ and~$v_3$, and $v_0$ is adjacent to both of them.
	Therefore, $C_3$ does not have the property $(*)$.
	This gives a contradiction so that $G \bx K_2 \notin \mH$.
\end{proof}

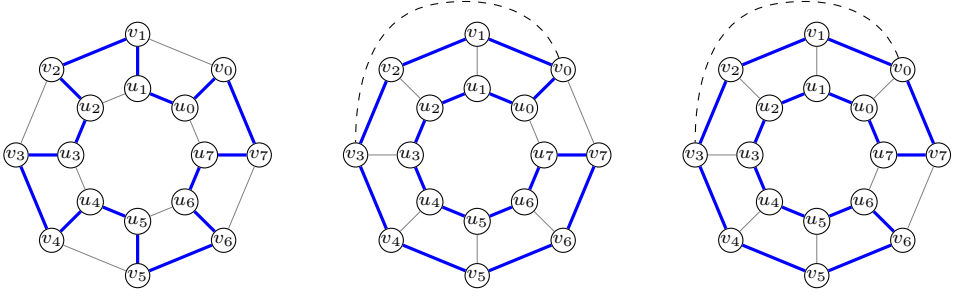
\begin{figure}
	\centering
	\begin{tikzpicture}[scale=0.8, every node/.style={inner sep=0.1pt}]
		% Vertices of inner cycle
		\foreach \i in {1,...,8} {
			\node[circle,draw] (A\i) at (360/8 * \i:1.1) {\scriptsize{$u_{\the\numexpr\i-1\relax}$}};
		}
		% Vertices of outer cycle
		\foreach \i in {1,...,8} {
			\node[circle, draw] (B\i) at (360/8 * \i:2) {\scriptsize{$v_{\the\numexpr\i-1\relax}$}};
		}
		% Edges of the innter cycle
		\foreach \i in {1,...,8} {
			\pgfmathtruncatemacro{\nexti}{mod(\i,8)+1}
			\draw[gray] (A\i) -- (A\nexti);
		}
		% Edges of the outer cycle
		\foreach \i in {1,...,8} {
			\pgfmathtruncatemacro{\nexti}{mod(\i,8)+1}
			\draw[gray] (B\i) -- (B\nexti);
		}
		% Connecting the two cycles
		\foreach \i in {1,...,8} {
			\draw[gray] (A\i) -- (B\i);
		}
		%the Hamiltonian cycle
		\draw[very thick, blue] (A1) to (A2) to (B2) to (B3) to (A3) to (A4) to (B4) to (B5) to (A5) to (A6) to (B6) to (B7) to (A7) to (A8) to (B8) to (B1) to (A1);
	\end{tikzpicture}
	\hspace{0.7cm}
	\begin{tikzpicture}[scale=0.8, every node/.style={inner sep=0.1pt}]
		% Vertices of inner cycle
		\foreach \i in {1,...,8} {
			\node[circle,draw] (A\i) at (360/8 * \i:1.1) {\scriptsize{$u_{\the\numexpr\i-1\relax}$}};
		}
		% Vertices of outer cycle
		\foreach \i in {1,...,8} {
			\node[circle, draw] (B\i) at (360/8 * \i:2) {\scriptsize{$v_{\the\numexpr\i-1\relax}$}};
		}
		% Edges of the innter cycle
		\foreach \i in {1,...,8} {
			\pgfmathtruncatemacro{\nexti}{mod(\i,8)+1}
			\draw[gray] (A\i) -- (A\nexti);
		}
		% Edges of the outer cycle
		\foreach \i in {1,...,8} {
			\pgfmathtruncatemacro{\nexti}{mod(\i,8)+1}
			\draw[gray] (B\i) -- (B\nexti);
		}
		% Connecting the two cycles
		\foreach \i in {1,...,8} {
			\draw[gray] (A\i) -- (B\i);
		}
		%the Hamiltonian cycle
		\draw[very thick, blue] (A1) to (A2) to (A3) to (A4) to (A5) to (A6) to (A7) to (A8) to (B8) to  (B7) to (B6) to (B5) to (B4) to (B3) to (B2) to (B1) to (A1);
		%the extra edge
		\draw[dashed] (B1) to [in=10, out=110] (-0.7,2.5) to [in=90, out=190] (B4);
	\end{tikzpicture}
	\hspace{0.7cm}
	\begin{tikzpicture}[scale=0.8, every node/.style={inner sep=0.1pt}]
		% Vertices of inner cycle
		\foreach \i in {1,...,8} {
			\node[circle,draw] (A\i) at (360/8 * \i:1.1) {\scriptsize{$u_{\the\numexpr\i-1\relax}$}};
		}
		% Vertices of outer cycle
		\foreach \i in {1,...,8} {
			\node[circle, draw] (B\i) at (360/8 * \i:2) {\scriptsize{$v_{\the\numexpr\i-1\relax}$}};
		}
		% Edges of the innter cycle
		\foreach \i in {1,...,8} {
			\pgfmathtruncatemacro{\nexti}{mod(\i,8)+1}
			\draw[gray] (A\i) -- (A\nexti);
		}
		% Edges of the outer cycle
		\foreach \i in {1,...,8} {
			\pgfmathtruncatemacro{\nexti}{mod(\i,8)+1}
			\draw[gray] (B\i) -- (B\nexti);
		}
		% Connecting the two cycles
		\foreach \i in {1,...,8} {
			\draw[gray] (A\i) -- (B\i);
		}
		%the Hamiltonian cycle
		\draw[very thick, blue] (A8) to (A1) to (A2) to (A3) to (A4) to (A5) to (A6) to (A7) to  (B7) to (B6) to (B5) to (B4) to (B3) to (B2) to (B1) to (B8) to (A8);
		%the extra edge
		\draw[dashed] (B1) to [in=10, out=110] (-0.7,2.5) to [in=90, out=190] (B4);
	\end{tikzpicture}
	\caption{Illustration of the construction in \cref{lem:cartesian-K2-even}.
		The possible edges inside a layer are omitted.
		Every vertex in the left Hamiltonian cycle is adjacent to precisely one of the vertices at distance three in the cycle. It is impossible that~$v_0$ has this property in both, the cycle in the middle and on the right. The cycle on the left only exists when~$|G|$ is even.}\label{fig:cartesian-K2-even}
\end{figure}

Next, we consider the case where $|G|$ is odd but~$G$ is not a cycle.
Note that we need a different strategy here because the cycle we used in \cref{lem:cartesian-K2-even} (left side of \cref{fig:cartesian-K2-even}) only exists when~$|G|$ is even.

\begin{restatable}{lemma}{lemmacartesianoddnoncycle}\label{lem:cartesian-K2_odd_noncycle}
	Let $G$ be Hamiltonian with $|G|=:k$ odd and $G\neq C_k$. 
	Then $G \bx K_2 \notin \mH$.
\end{restatable}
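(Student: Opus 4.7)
The plan is to reduce distinguishing Hamiltonian cycles to the same invariant used in the preceding lemmata, namely the set of vertical edges $\{\{u_i,v_i\}\colon i\in\{0,\dots,k-1\}\}$. Since $k=|G|$ is odd and $|K_2|=2$, $K_2$ cannot be a Cartesian prime factor of $G$, so $G$ and $K_2$ are relatively prime. Hence, by the automorphism theorem for relatively prime Cartesian products used in \cref{lem:prime-not-in-H} (Corollary~6.12 in~\cite{handbookprodgraphs}), every $\varphi\in\Aut(G\bx K_2)$ factorises as $\varphi(g,h)=(\varphi_G(g),\varphi_H(h))$ with $\varphi_G\in\Aut(G)$ and $\varphi_H\in\Aut(K_2)$, and hence sends each vertical edge $\{u_i,v_i\}$ to $\{u_{\varphi_G(i)},v_{\varphi_G(i)}\}$. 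In particular, the induced permutation of vertical-edge positions lies in $\Aut(G)$.

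My reference cycle is the standard $C_2=(u_0,\dots,u_{k-1},v_{k-1},\dots,v_0)$, whose two vertical edges sit at positions $\{0,k-1\}$; the corresponding pair $\{v_0,v_{k-1}\}$ is a (cycle) edge of $G$. To construct a structurally different Hamiltonian cycle, I exploit the fact that $G\neq C_k$: after relabelling the Hamiltonian cycle of $G$ we may assume $G$ contains a chord $\{v_0,v_t\}$ for some $2\le t\le k-2$. This chord yields the Hamiltonian path $v_{t-1}\to v_{t-2}\to\dots\to v_0\to v_t\to v_{t+1}\to\dots\to v_{k-1}$ in $G$, and analogously in the inner layer. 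Joining these two Hamiltonian paths by the vertical edges at positions $t-1$ and $k-1$ produces a Hamiltonian cycle $C_3$ of $G\bx K_2$ whose vertical edges lie at $\{t-1,k-1\}$.

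I then argue that $C_2\not\sim C_3$ under $\Aut(G\bx K_2)$. Any automorphism mapping $C_2$ to $C_3$ would induce $\pi\in\Aut(G)$ with $\pi(\{0,k-1\})=\{t-1,k-1\}$, and since $\pi$ must map the edge $\{v_0,v_{k-1}\}\in E(G)$ to an edge of $G$, the pair $\{v_{t-1},v_{k-1}\}$ would also have to be an edge. Therefore, as soon as $\{v_{t-1},v_{k-1}\}\notin E(G)$, the two cycles are inequivalent and $G\bx K_2\notin\mH$.

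The main obstacle is the residual case where $\{v_{t-1},v_{k-1}\}$ happens to be an edge of $G$---necessarily another chord of cyclic distance $t$. I would first try the symmetric Hamiltonian path $v_1\to v_2\to\dots\to v_t\to v_0\to v_{k-1}\to\dots\to v_{t+1}$, which yields an alternative Hamiltonian cycle with vertical-edge positions $\{1,t+1\}$ and gives a second non-edge condition to test, and otherwise switch to a different chord as the reference. If for every chord both conditions fail, then the chord set of $G$ is closed under cyclic shifts by $\pm 1$, which forces $G$ to be a circulant graph (in particular $G=K_k$, or $G$ is a non-trivial circulant distinct from $C_k$). In this remaining situation I would instead construct a Hamiltonian cycle with four vertical edges, exploiting the additional chords to produce two disjoint Hamiltonian paths in each layer that glue together into a single Hamiltonian cycle of $G\bx K_2$; this cycle is then structurally incomparable with $C_2$ because the number of vertical edges is preserved by $\Aut(G\bx K_2)$.
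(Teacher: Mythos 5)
Your overall route is genuinely different from the paper's and, in its main part, correct. The structural observation at its core --- since $k$ is odd, $K_2$ cannot be a Cartesian factor of $G$, so $G$ and $K_2$ are relatively prime and by \cite[Corollary~6.12]{handbookprodgraphs} every automorphism of $G \bx K_2$ splits as $(\varphi_G,\varphi_H)$, hence preserves the set of vertical edges and permutes their positions by an automorphism of $G$ --- is valid and gives you a much stronger invariant than the paper uses. (The paper avoids this splitting, necessarily so in the even-order companion \cref{lem:cartesian-K2-even} where $K_2$ may divide $G$, and instead works with metric invariants of the cycles and a propagation claim for translates of a chord.) Comparing the vertical-edge position sets $\{0,k-1\}$ of $C_2$ and $\{t-1,k-1\}$ of your $C_3$, and using that $\varphi_G$ must carry the edge $\{v_0,v_{k-1}\}$ of $G$ to an edge, correctly settles the generic case; the reduction of the remaining case to circulants (the chord set being closed under the shift $i\mapsto i+1$, which is then an automorphism of $G$) is also sound.

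The gap is the residual case. For circulants $\Cay(\Z_k,S)$ with $\{1,-1\}\subsetneq S$ --- which includes $K_k$ for odd $k\geq 5$, a case the lemma must cover --- you only announce a Hamiltonian cycle with four vertical edges built from ``two disjoint Hamiltonian paths in each layer''. This is the right idea but not automatic: the four crossings must be placed so that the two outer paths and the two inner paths concatenate into a \emph{single} cycle rather than two disjoint ones, and a naive pairing of endpoints fails. A working instance: for a chord of cyclic distance $t$ with $2\leq t\leq (k-1)/2$, take outer paths $(v_1,\dots,v_t)$ and $(v_{t+1},\dots,v_{k-1},v_0)$, inner paths $(u_{t+1},u_1)$ and $(u_t,u_{t-1},\dots,u_2,u_{t+2},u_{t+3},\dots,u_{k-1},u_0)$, and glue with the vertical edges at positions $1$, $t$, $t+1$, $0$; this uses only cycle edges and translates of the chord (available since $G$ is circulant), covers all vertices, and forms a single Hamiltonian cycle with four vertical edges, hence inequivalent to $C_2$ because automorphisms preserve the number of vertical edges. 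With such a construction written out and verified, your proof is complete.
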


\begin{proof}
	For the sake of contradiction, assume that $G \bx K_2 \in \mH$.
	Note that $G$ is a cycle of length $k$ with at least one additional edge $e=\{u,v\}$. Since~$k$ is odd, the distance between~$u$ and $v$ along the cycle is even in one direction and odd in the other direction.
	Therefore, we can renumber the vertices of $G$ such that there is an edge between $v_0$ and $v_j$, where~$j\in \{3, \dots, k-2\}$ is odd.
	
	As a first step, we show the following (under the assumption that $G \bx K_2 \in \mH$).
	
	\begin{claim}
		For every $i \in \{0, \dots, k-1\}$, there is an edge between $v_i$ and $v_{i+j \bmod k}$.
	\end{claim}
	
	\begin{proof}[Proof of the claim]
		We consider cycles of the form
		\begin{equation*}
			C^{(i)}:=(u_i, u_{i+1 \bmod k}, \dots, u_{i+k-1 \bmod k}, v_{i+k-1 \bmod k}, v_{i+k-2 \bmod k}, \dots, v_i)
		\end{equation*}
		for $i \in \{0, \dots, k-1\}$ (as illustrated in the middle and on the right of \cref{fig:cartesian-K2-even}).
		
		Given an edge $e=\{u,v\}$, we say that the distance of a vertex $w$ to $e$ is \[\min\{d(u,w), d(v,w)\}, \]
		where $d$ denotes the usual graph distance. 
		We say that an edge $e$ is a \emph{symmetry edge of a Hamiltonian cycle $C$} if it is contained in~$C$ and, for every $l \in \{0, \dots, k-1\}$, the two vertices that have distance $l$ to $e$ in~$C$ are adjacent in $G\bx K_2$.
		Note that $\{u_i, v_i\}$ and~$\{u_{i+k-1 \bmod k}, v_{i+k-1 \bmod k}\}$ are symmetry edges of $C^{(i)}$.
		Moreover, we argue that there are no other symmetry edges of $C^{(i)}$: To see this, fix some other edge~$e$ of~$C^{(i)}$. W.l.o.g., we let $i=0$ and $e=\{v_p, v_{p+1}\}$ for some $p\in\{0, \dots, k-2\}$.
		Since~$k$ is odd, we have~$d(v_0, v_p)=p \neq k-p -2 = d(v_{k-1}, v_{p+1})$. Without loss of generality, let $p<k-p -2$, i.e., $p\leq (k-1)/2 -2$ and set $l:=p+1$.
		Then the two vertices at distance $l$ to $e$ are~$u_0$ and~$v_{2p+2}\in \{v_{p+1}, \dots, v_{k-1}\}$. Since these are non-adjacent, $e$ is not a symmetry edge of~$C^{(0)}$.
		
		Since we assume that $G\bx K_2 \in \mH$, there is an automorphism mapping $C^{(0)}$ to~$C^{(i)}$ for every $i\in \{0, \dots, k-1\}$.
		Note that this automorphism maps the symmetry edges of~$C^{(0)}$ to the symmetry edges of~$C^{(i)}$.
		Recall that there is an edge between $v_0$ and~$v_j$, and between $u_0$ and~$u_j$.
		This means that the end vertices of the symmetry edge $\{u_0, v_0\}$ of $C^{(0)}$ are adjacent to the vertices at distance $j$ in the cycle, where the cycle-distance here is measured such that the symmetry edge is not used.
		If there is also an edge between $v_{k-1}$ and $v_{k-1-j}$, the existence of the automorphisms mapping~$C^{(0)}$ to~$C^{(i)}$ for every $i \in \{1, \dots, k-1\}$ immediately implies that, for every~$i$,~$v_i$ is adjacent to $v_{i+j\bmod k}$ and $v_{i-j\bmod k}$, i.e., the claim holds.
		If there is no edge between  $v_{k-1}$ and~$v_{k-1-j}$, 
		the existence of the automorphism mapping $C^{(0)}$ to~$C^{(j+1)}$ implies~$v_{j+1}$ is not connected to $v_{2j+1 \bmod k}$ as $v_0$ is connected to $v_j$. Iteratively, the automorphism from~$C^{(0)}$ to $C^{(l(j+1))}$ yields $v_{l(j+1) \bmod k}$ and $v_{(l+1)(j+1)-1 \bmod k}$ are connected if and only if $l$ is even. 
		Thus, for $l=k-1$ we derive $v_{k-j-1}$ and $v_{k-1}$ are connected, contradicting our assumption.
		This completes the proof of the claim.
	\end{proof}
	
	Now that we have established the claim, we construct a second Hamiltonian cycle and argue that it cannot be mapped via an automorphism to $C^{(0)}$.
	We set
	\begin{equation*}
		C= (u_0, v_0, v_1, u_1, u_2 \dots, v_{j-1}, v_j, v_{j+1}, \dots v_{k-1}, u_{k-1}, \dots, u_j),
	\end{equation*}
	where we have used that $j$ is odd. This cycle is illustrated in \cref{fig:cartesian-K2_odd_noncycle}.
	
	Note that the cycle $C^{(0)}$ has the property that every vertex $v$ is adjacent to some other vertex $v'$ with $d_{C^{(0)}}(v,v')=j$.
	We argue that vertex $w=u_{(j+1)/2}$ does not have this property in $C$: 
	If $j \bmod 4=1$ (i.e., $(j+1)/2$ is odd), the two adjacent vertices to $w$ are $v_1$ and~$v_{j+2 \bmod k}$ (the latter is $v_{j+2}$ if $j+2<k$ or it is $v_0$ in case $j+2=k$, note that $j+1=k$ is not possible as $k$ and $j$ are both odd). Both of these vertices are not adjacent to $w$.
	In case $j \bmod 4=3$ (i.e., $(j+1)/2$ is even), the two vertices at distance $j$ in $C$ are $v_0$ and $v_{j+1}$, both of which are not adjacent to $w$.
	This proves that there is no automorphism mapping $C$ to~$C^{(0)}$, which gives a contradiction.
\end{proof}

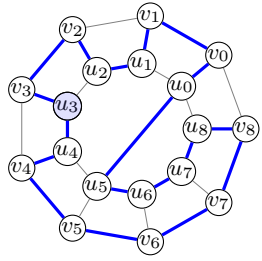
\begin{figure}
	\centering
	\begin{tikzpicture}[scale=0.8, every node/.style={inner sep=0.1pt}]
		% Vertices of inner cycle
		\foreach \i in {1,...,9} {
			\node[circle,draw] (A\i) at (360/9 * \i:1.1) {\footnotesize{$u_{\the\numexpr\i-1\relax}$}};
		}
		\node[circle,draw, thick,fill=blue!60!white, fill opacity=0.3] (A4) at (360/9 * 4:1.1) {\footnotesize{$u_{3}$}};
		% Vertices of outer cycle
		\foreach \i in {1,...,9} {
			\node[circle, draw] (B\i) at (360/9 * \i:1.9) {\footnotesize{$v_{\the\numexpr\i-1\relax}$}};
		}
		% Edges of the inner cycle
		\foreach \i in {1,...,9} {
			\pgfmathtruncatemacro{\nexti}{mod(\i,9)+1}
			\draw[gray] (A\i) -- (A\nexti);
		}
		
		% Edges of the outer cycle
		\foreach \i in {1,...,9} {
			\pgfmathtruncatemacro{\nexti}{mod(\i,9)+1}
			\draw[gray] (B\i) -- (B\nexti);
		}
		% Connecting the two cycles
		\foreach \i in {1,...,9} {
			\draw[gray] (A\i) -- (B\i);
		}
		%the Hamiltonian cycle
		\draw[very thick, blue] (A1) to (B1) to (B2) to (A2) to (A3) to (B3) to (B4) to (A4) to (A5) to (B5) to (B6) to (B7) to (B8) to (B9) to (A9) to (A8) to (A7) to (A6) to (A1);
	\end{tikzpicture}
	\caption{Illustration of the construction in \cref{lem:cartesian-K2_odd_noncycle}. The possible edges inside the layers are omitted. 
		The cycle uses the existence of one additional edge inside a layer $\{u_0, u_5\}$. The marked vertex $u_3$ is not adjacent to the vertices at distance 5 along the cycle ($v_1$ and $v_7$).}\label{fig:cartesian-K2_odd_noncycle}
\end{figure}

The only case that is left to settle for \cref{thm:cartesian-product-K2} is when $|G|=4$.
Note that there are only 3 Hamiltonian graphs on 4 vertices and checking these (either by hand or computation), we obtain that $G=C_4$ is the only case where $G \bx K_2 \in \mH$.

Now, we have all ingredients in place to prove \cref{thm:cartesian-product-K2}.

\begin{proof}[Proof of \cref{thm:cartesian-product-K2}]
	Let $G$ be a Hamiltonian graph (in particular, $k:=|G|\geq 3$) such that~$G \bx K_2 \in \mH$.
	By \cref{lem:cartesian-K2-even} and \cref{lem:cartesian-K2_odd_noncycle}, we obtain that either $k=4$ or $G=C_k$ for some odd $k$.
	In case $k=4$, we argued that $G=C_4$.
	This completes one implication of \cref{thm:cartesian-product-K2}.
	For the other direction, in \cref{lem:oddcycle_prism}, we have proved that~$C_k \bx K_2 \in \mH$ and have seen that $C_4 \bx K_2 \in \mH$. This completes the proof of \cref{thm:cartesian-product-K2}.
\end{proof}

In the following section, we consider Cayley graphs of abelian groups and as a base case we separately investigate the special case of cyclic groups. Therefore, let us comment on how the graphs studied in this section relate to that.

\begin{remark}
	Note that $C_k \bx K_2= \Cay(\Z_k \bx \Z_2, \{(0,1), (1,0), (0,-1), (-1,0)\})$. 
	In particular, it is the Cayley graph of an abelian group.
	If $k$ is odd, $C_k \bx K_2$ is generated by~$(1,1)$ so that  $C_k \bx K_2$ is even the Cayley graph of a cyclic group.
	On the other hand, note that for $k$ even, $C_k \bx K_2$ is not the Cayley graph of a cyclic group: To see this, assume that~$C_k \bx K_2 = \Cay(\Z_{2k}, S)$ for some generating set $S$. Since  $C_k \bx K_2 = \Cay(\Z_{2k}, S)$ is 3-regular,~$S$ contains the only self-inverse element of $\Z_{2k}$, i.e., $k$, and only one other element~$s$. 
	Then~$s$ is odd, because otherwise $S$ is not generating.
	Observe that $(0, s, 2s, \dots, ks, ks+k)$ is a closed walk of length $k+1$ because \(k\cdot s +k \equiv (s+1)k \equiv 0 \bmod 2k\). 
	This means that~$\Cay(\Z_{2k}, S)$ contains a closed walk of odd length.
	Since the $k$-prism for $k$ even is bipartite with bipartition $A=\{u_i: i \text{ odd}\} \cup \{v_i: i \text{ even}\}$, $B=V\setminus A$, this gives a contradiction.
\end{remark}

%%%%%%%%%%%%%%%%%%%%%%%%%%%%%%%%%%%%%%%%%%%%%%
%%% cayley graphs of abelian groups
%%%%%%%%%%%%%%%%%%%%%%%%%%%%%%%%%%%%%%%%%%%%%%

\section{Cayley graphs of abelian groups}
\label{sec:cayley}

In this section, we study Cayley graphs of abelian groups with the goal to find a characterization of the Hamiltonian-transitive ones.
Recall that it is an open question whether all Cayley graphs with at least three vertices are Hamiltonian, but this is well known to hold for Cayley graphs of abelian groups~\cite{CHE81}.
In this section, we extend this result and show that many Cayley graphs of abelian groups even contain multiple Hamiltonian cycles up to symmetry (\cref{thm:odd-cayley-graphs} and \cref{thm:cayley_even}).

\subsection{Preliminaries}\label{sec:cayley-preliminaries}

In this subsection, we collect some preliminary notation and results. 
In the following, we write all groups additively.
Given a generating set~$S$ of an abelian group $\Gamma$, we write $\Cay(\Gamma,S)$ for the Cayley graph of $\Gamma$ with respect to~$S$. 
As we restrict ourselves to undirected graphs without loops, we always assume $S$ to be closed under taking inverses and $0 \notin S$.

We begin by observing that Cayley graphs are product-preserving in the sense of the lemma below.
For this, for two groups $\Gamma_1$ and $\Gamma_2$, $\Gamma_1 \times \Gamma_2$ denotes their direct product.
By slight abuse of notation, we allow considering a set~$S_1 \subseteq \Gamma_1$ as a subset of~$\Gamma_1 \times \Gamma_2$ by actually referring to the set~$\{(s,0):s \in S_1\}$, and analogously for~$S_2$.
The following lemma is a direct consequence of the definition of Cayley graphs and Cartesian products. Although this statement is well known, we give a short proof for completeness.

\begin{restatable}{lemma}{splittingcayley}\label{lemma:splittingcayley}
	Let \(\Gamma_1\) and \(\Gamma_2\) be finite groups with generating sets \(S_1\) and \(S_2\), respectively. For \(\Gamma = \Gamma_1 \times \Gamma_2\), we have \(\Cay(\Gamma,S_1 \cup S_2) \cong \Cay(\Gamma_1, S_1) \bx \Cay(\Gamma_2, S_2)\). 
\end{restatable}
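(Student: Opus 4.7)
The plan is to exhibit an explicit isomorphism between the two graphs, namely the identity map on the vertex set $\Gamma_1 \times \Gamma_2$, and verify it preserves adjacency by directly unwinding the definitions of Cayley graph and Cartesian product. The statement is essentially a bookkeeping result, so I do not expect any real obstacle; the only subtlety is clarifying the notation.

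First, I would make precise what $S_1 \cup S_2$ means as a subset of $\Gamma = \Gamma_1 \times \Gamma_2$: under the natural embeddings $\Gamma_1 \hookrightarrow \Gamma$, $g_1 \mapsto (g_1, 0)$ and $\Gamma_2 \hookrightarrow \Gamma$, $g_2 \mapsto (0, g_2)$, we interpret $S_1 \cup S_2$ as $(S_1 \times \{0\}) \cup (\{0\} \times S_2)$. I would quickly note that this set is inverse-closed (since $S_1, S_2$ are), does not contain the identity $(0,0)$ (since $0 \notin S_1$ and $0 \notin S_2$), and generates $\Gamma$ (since $S_1$ generates $\Gamma_1$ and $S_2$ generates $\Gamma_2$), so it is a valid generating set for the Cayley graph.

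Next, I would verify that the identity map on $\Gamma_1 \times \Gamma_2$ is an isomorphism. Two vertices $(g_1,g_2)$ and $(h_1,h_2)$ are adjacent in $\Cay(\Gamma, S_1 \cup S_2)$ if and only if $(h_1 - g_1,\, h_2 - g_2) \in (S_1 \times \{0\}) \cup (\{0\} \times S_2)$, which, splitting into the two cases of the union, is equivalent to: either $h_2 = g_2$ and $h_1 - g_1 \in S_1$, or $h_1 = g_1$ and $h_2 - g_2 \in S_2$. By the definition of $\Cay(\Gamma_i, S_i)$, this is precisely the condition that either $g_2 = h_2$ and $\{g_1, h_1\} \in E(\Cay(\Gamma_1, S_1))$, or $g_1 = h_1$ and $\{g_2, h_2\} \in E(\Cay(\Gamma_2, S_2))$, which by definition is the adjacency condition in $\Cay(\Gamma_1, S_1) \bx \Cay(\Gamma_2, S_2)$. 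Hence the identity map is an isomorphism, concluding the proof.
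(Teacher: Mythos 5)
Your proposal is correct and matches the paper's own proof: both take the identity map on $\Gamma_1 \times \Gamma_2$ and verify that adjacency in $\Cay(\Gamma, S_1 \cup S_2)$ unwinds exactly to the Cartesian-product adjacency condition. Your extra remarks on $S_1 \cup S_2$ being inverse-closed, identity-free, and generating are a harmless (and slightly more careful) addition.
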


\begin{proof}
	The graphs $\Cay(\Gamma,S_1 \cup S_2)$ and $\Cay(\Gamma_1, S_1) \bx \Cay(\Gamma_2, S_2)$ have the same vertex set by definition.
	Two vertices $(u,v), (u',v')$ are adjacent in $\Cay(\Gamma, S_1 \cup S_2)$ if and only if $(u-u', v-v')$ is in $S_1$ or $S_2$. 
	This is the case if and only if $u-u'=0$ and~$v-v' \in S_2$, or $u-u'\in S_1$ and~$v-v'=0$, i.e., if and only if $(u,v), (u',v')$ are adjacent in $\Cay(\Gamma_1, S_1) \bx \Cay(\Gamma_2, S_2)$.
\end{proof}

A useful parameter in the context of Cayley graphs is the \emph{Hamilton compression}, introduced by Gregor et al.~in \cite{GRE24}. A Hamiltonian cycle $C=(v_0, \dots, v_{n-1})$ of a graph $G$ on $n$ vertices is called \emph{$k$-symmetric} if~$k$ divides $n$ and $v_i \mapsto v_{i+n/k \bmod n}$ is an automorphism of~$G$.
We call this automorphism a \emph{rotation of $C$ by $n/k$.}
Note that, if a cycle is $k$-symmetric, i.e., can be rotated by~$n/k$, it can also be rotated by multiples of~$n/k$ as this corresponds to repeatedly applying the induced automorphism.
In particular, a $k$-symmetric cycle can be rotated in both directions by $n/k$ because a rotation by~$n/k$ in one direction corresponds to a rotation by~$(k-1)n/k$ in the other direction.
The \emph{Hamilton compression of~$C$} is then defined by~\({\kappa(C):=\max\{k: \text{$C$ is $k$-symmetric}\}}.\)
For a Hamiltonian graph $G$, the \emph{Hamilton compression of $G$} is defined by \[\kappa(G):=\max\{k: \text{there is a $k$-symmetric Hamiltonian cycle of $G$}\}.\]
Note that, by definition,~$\kappa(G)$ divides~$n$.

Since the Hamilton compression of a Hamiltonian cycle is preserved under applying automorphisms, we immediately obtain the following result.

\begin{lemma}\label{lemma:samekappa}
	Let $G$ be a graph. If $G \in \mH$, then all Hamiltonian cycles of $G$ have the same Hamilton compression.
\end{lemma}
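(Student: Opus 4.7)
The plan is to show that the Hamilton compression of a cycle is invariant under applying an automorphism of the ambient graph, from which the lemma follows immediately by the definition of $\mH$.

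More concretely, fix two Hamiltonian cycles $C_1$ and $C_2$ of $G$. Since $G\in\mH$, there is some $\varphi\in\Aut(G)$ with $\varphi(C_1)=C_2$ as edge sets. Write $C_1=(v_0,\ldots,v_{n-1})$ and set $w_i:=\varphi(v_i)$, so that $C_2=(w_0,\ldots,w_{n-1})$ is a valid cyclic ordering of $C_2$. Suppose $C_1$ is $k$-symmetric, witnessed by an automorphism $\sigma\in\Aut(G)$ with $\sigma(v_i)=v_{i+n/k\bmod n}$ for all $i$. Consider the conjugate $\tau:=\varphi\sigma\varphi^{-1}\in\Aut(G)$. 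For every $i$ we have
\[
\tau(w_i)=\varphi(\sigma(v_i))=\varphi(v_{i+n/k\bmod n})=w_{i+n/k\bmod n},
\]
so $\tau$ witnesses that $C_2$ is $k$-symmetric. Hence $\kappa(C_2)\ge\kappa(C_1)$.

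Applying the same argument with $\varphi^{-1}$ in place of $\varphi$ yields the reverse inequality $\kappa(C_1)\ge\kappa(C_2)$, and therefore $\kappa(C_1)=\kappa(C_2)$. As $C_1$ and $C_2$ were arbitrary, all Hamiltonian cycles of $G$ share the same Hamilton compression.

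The only subtlety to keep in mind is the choice of cyclic ordering on $C_2$: picking $w_i=\varphi(v_i)$ is essential so that the rotation $\sigma$ on $C_1$ transports under conjugation by $\varphi$ to the analogous rotation on $C_2$. Beyond this bookkeeping, the proof is a routine conjugation argument and I do not anticipate any further obstacle.
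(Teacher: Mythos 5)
Your proof is correct and takes the same approach as the paper, which simply asserts that the Hamilton compression of a cycle is preserved under automorphisms and derives the lemma immediately; your conjugation argument $\tau=\varphi\sigma\varphi^{-1}$ is exactly the detail the paper leaves implicit.
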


We later use the next result to split Cayley graphs into Cartesian products. 

\begin{restatable}{lemma}{compositeorder}\label{lemma:compositeorder}
	Let $\Gamma$ be an abelian group and \(p\) be a prime divisor of \(|\Gamma|\). Let \(S\) be a generating set of $\Gamma$ such that the order of each element in $S$ is either a power of $p$ or coprime to $p$.	
	Setting \(S_p:= \{s \in S \colon p \mid \operatorname{ord}(s)\}\) 
	and \(S'_{p} := S \setminus S_p\), we have~\(\Gamma = \langle S_p \rangle \times \langle S'_{p} \rangle\).
\end{restatable}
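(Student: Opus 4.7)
The plan is to use the primary decomposition of finite abelian groups. Let $P := \{g \in \Gamma : \operatorname{ord}(g) \text{ is a power of } p\}$ be the Sylow $p$-subgroup of $\Gamma$ and let $Q := \{g \in \Gamma : \gcd(\operatorname{ord}(g), p) = 1\}$ be the complementary subgroup (i.e., the direct product of the Sylow $q$-subgroups for the other primes $q$ dividing $|\Gamma|$). The primary decomposition theorem yields $\Gamma = P \times Q$ as an internal direct product, with $P \cap Q = \{0\}$ and $P + Q = \Gamma$.

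By the hypothesis on orders of elements of $S$, every $s \in S_p$ has order a power of $p$, hence $s \in P$, and every $s \in S'_p$ has order coprime to $p$, hence $s \in Q$. It follows immediately that $\langle S_p\rangle \subseteq P$ and $\langle S'_p\rangle \subseteq Q$. What remains is to show that these inclusions are equalities; then the equality $\Gamma = \langle S_p\rangle \times \langle S'_p\rangle$ as an internal direct product will follow from $P \cap Q = \{0\}$.

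To prove the equalities, I would apply the canonical projection $\pi_P\colon \Gamma \to P$ onto the first factor. Since $S = S_p \cup S'_p$ generates $\Gamma$, we have $\Gamma = \langle S_p\rangle + \langle S'_p\rangle$. Applying $\pi_P$, which is a surjective homomorphism, gives
\[
P = \pi_P(\Gamma) = \pi_P(\langle S_p\rangle) + \pi_P(\langle S'_p\rangle) = \langle S_p\rangle + \{0\} = \langle S_p\rangle,
\]
where I used $\langle S_p\rangle \subseteq P$ (so $\pi_P$ is the identity there) and $\langle S'_p\rangle \subseteq Q = \ker\pi_P$. A symmetric argument with the projection $\pi_Q$ yields $\langle S'_p\rangle = Q$. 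Combined with $P \cap Q = \{0\}$, this gives the desired internal direct product decomposition $\Gamma = \langle S_p\rangle \times \langle S'_p\rangle$.

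There is no real obstacle here: the assumption that each generator's order is either a power of $p$ or coprime to $p$ is exactly what is needed to guarantee that $S$ is partitioned cleanly between the two factors of the primary decomposition, and the projection argument then forces each piece to generate the corresponding factor. The only minor subtlety is to keep the distinction between internal and external direct products clear when invoking the primary decomposition.
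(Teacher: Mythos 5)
Your proof is correct. It takes a slightly different route from the paper's: the paper argues directly that every element of $\langle S_p\rangle$ has order a power of $p$ and every element of $\langle S'_p\rangle$ has order coprime to $p$ (using that the order of a sum in an abelian group divides the lcm of the orders of the summands), concludes $\langle S_p\rangle \cap \langle S'_p\rangle = \{0\}$, and is done since the two subgroups jointly generate $\Gamma$. You instead invoke the primary decomposition $\Gamma = P \times Q$ and use the projections to show the stronger statement that $\langle S_p\rangle$ and $\langle S'_p\rangle$ are \emph{exactly} the Sylow $p$-subgroup and its complement. Your version buys a cleaner structural identification of the two factors (which is not needed for the lemma but is a true and sometimes useful refinement), at the cost of citing the primary decomposition theorem; the paper's version is more elementary and self-contained, needing only the lcm-of-orders fact and the observation that trivial intersection plus joint generation yields an internal direct product in an abelian group. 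Both are complete proofs.
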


\begin{proof}
	Note that, for every $s \in S_p$, the order of $s$ is a power of $p$.
	Since the order of a sum of elements in an abelian group divides the least common multiple of the orders of the summands, we obtain that 
	the order of each element in $\langle S_p \rangle$ is also a power of~$p$.
	Similarly,~$S'_{p}$ only contains elements of order coprime to $p$ so that this also holds for $\langle S'_{p} \rangle$.
	Therefore, we obtain $\langle S_p \rangle \cap \langle S'_{p} \rangle=\{0\}$.
	Combining this with the fact that the groups $\langle S_p\rangle $ and $\langle S'_{p} \rangle$ together generate~\(\Gamma\), the claim follows.
\end{proof}

The following statement is a direct consequence of the proof of \cite[Theorems~6.4 and~6.6]{GRE24}.

\begin{lemma}\label{lemma:divisibilitylemma}%"Teilbarkeitslemma"
	Let $G \in \mH$ and assume that $G = \Cay(\Gamma, S)$ for an abelian group $\Gamma$ and a generating set $S$.
	\begin{enumerate}
		\item Suppose that $|\Gamma|$ is odd. If $S$ contains an element of order $pm$ for a prime $p$ and $m > 1$, then~$p$ divides $\kappa(G)$.
		\item If $|\Gamma|$ is even, then $\kappa(G)$ is even.
	\end{enumerate}
\end{lemma}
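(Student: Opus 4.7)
The plan is to reduce both claims to Lemma~\ref{lemma:samekappa} combined with the explicit Hamilton cycle constructions from the proofs of Theorems~6.4 and~6.6 of Gregor et al.~\cite{GRE24}. First I would observe that the set of positive integers $k$ for which a given Hamilton cycle $C$ is $k$-symmetric is closed under least common multiples: two rotational automorphisms of orders $p$ and $q$ compose, via a Bezout combination of their shifts $|V(C)|/p$ and $|V(C)|/q$, to a rotational automorphism of order $\operatorname{lcm}(p,q)$. Consequently every individual symmetry order of $C$ divides $\kappa(C)$. By Lemma~\ref{lemma:samekappa}, the assumption $G \in \mH$ forces $\kappa(C) = \kappa(G)$ for every Hamilton cycle $C$ of $G$. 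It therefore suffices to exhibit, for part~(1), a Hamilton cycle of $G$ that is $p$-symmetric, and for part~(2), one that is $2$-symmetric.

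For part~(1), given $s \in S$ of order $pm$ with $m > 1$, I would set $g := ms$. Then $g$ has order exactly $p$: its order divides $p$, and $g = 0$ would contradict $\operatorname{ord}(s) = pm$. Translation by $g$ is an automorphism of $\Cay(\Gamma, S)$ of order $p$, so it suffices to construct a Hamilton cycle invariant under this translation; such a cycle is automatically $p$-symmetric, with the translation realizing a rotation by $|\Gamma|/p$. Following the construction in the proof of Theorem~6.4 of Gregor et al., I would pass to the quotient $\bar{\Gamma} := \Gamma/\langle g \rangle$, choose a Hamilton cycle in $\Cay(\bar{\Gamma}, \bar{S})$ (which exists since Cayley graphs of abelian groups are Hamiltonian), and lift it coset by coset to $\Gamma$, arranging that one full traversal of the quotient cycle ends exactly $g$ ahead of its starting vertex rather than back at the start itself.

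For part~(2), Cauchy's theorem applied to the even-order group $\Gamma$ provides an element $h \in \Gamma$ of order $2$, and the analogous lift in the proof of Theorem~6.6 of Gregor et al.~produces a Hamilton cycle of $\Cay(\Gamma,S)$ invariant under translation by $h$, hence $2$-symmetric. Together with the reduction from the first paragraph, this yields $2 \mid \kappa(G)$.

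The main obstacle — already handled in Gregor et al.~— is ensuring that the coset-wise lift actually closes into a single Hamilton cycle rather than decomposing into $p$ (respectively $2$) shorter disjoint cycles. In part~(1), this is precisely where the assumption $m > 1$ is used: by placing the distinguished generator $s$ at a controlled position in the lifted cycle, one exploits the discrepancy between $\operatorname{ord}(s) = pm$ in $\Gamma$ and $\operatorname{ord}(\bar{s}) = m$ in $\bar{\Gamma}$ to force the accumulated lift to land in a nontrivial coset of $\langle g \rangle$. In part~(2), a somewhat more delicate case distinction is needed — using that $S$ generates $\Gamma$ to select an edge that witnesses the order-$2$ shift — but the outcome of the construction is the same.
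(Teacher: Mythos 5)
Your proposal is correct and matches the paper's approach: the paper gives no independent proof, stating only that the lemma is a direct consequence of the proof of Theorems~6.4 and~6.6 of Gregor et al., and your argument reconstructs exactly that derivation (their explicit $p$-symmetric, respectively $2$-symmetric, lifted Hamiltonian cycles, combined with Lemma~\ref{lemma:samekappa} and the observation that every symmetry order of a cycle divides its Hamilton compression).
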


Last, many arguments that we use in this section only work for graphs that are not too small.
Therefore, the following result ensures that we do not have to take care of these cases.

\begin{lemma}\label{lemma:small-groups}
	Let $G$ be the Cayley graph of an abelian group \(\Gamma\).
	\begin{itemize}
		\item Assume that $n:=|\Gamma|\leq 16$. Then $G \in \mH$ if and only if one of the following holds
		\begin{itemize}
			\item $G \in \{K_n, C_n, C_4 \bx K_2\}$,
			\item $G=K_{k,k}$ where $k=n/2$,
			\item $G=C_k \bx K_2$ where $k=n/2$ is odd.
		\end{itemize}
		\item Assume that \(\Gamma \in \{\Z_3 \times \Z_3 \times \Z_3, \Z_9 \times \Z_3\}\). Then \(G \in \mH\) if and only if \(G = K_{27}\).
	\end{itemize}
\end{lemma}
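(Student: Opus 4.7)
The plan is to verify this lemma by finite case analysis. For each abelian group $\Gamma$ with $|\Gamma| \le 16$ (and additionally $\Z_3 \times \Z_3 \times \Z_3$ and $\Z_9 \times \Z_3$), I would enumerate all inverse-closed generating sets $S \subseteq \Gamma \setminus \{0\}$ up to the natural action of $\Aut(\Gamma)$, and for each pair $(\Gamma, S)$ decide whether $\Aut(\Cay(\Gamma, S))$ acts transitively on the Hamiltonian cycles of $\Cay(\Gamma, S)$.

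For the ``if'' direction, several of the listed graphs are already covered by earlier statements: $C_n$ is uniquely Hamiltonian, $\Aut(K_n) = S_n$ is trivially transitive on Hamiltonian cycles, and $C_k \bx K_2$ with $k$ odd is handled by \cref{lem:oddcycle_prism}. For the remaining cases, namely $C_4 \bx K_2$, the instances of $K_{k,k}$ arising for even $n \le 16$, and $K_{27}$, I would argue directly. For $K_{k,k}$, every Hamiltonian cycle alternates between the two parts and the wreath product $S_k \wr S_2 = \Aut(K_{k,k})$ acts transitively on such alternating cycles; the graph $C_4 \bx K_2$, on only $8$ vertices, is checked individually, and $K_{27}$ is trivially in $\mH$ by the same argument as $K_n$ in general.

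For the ``only if'' direction, I would show that every other Cayley graph produced by the enumeration admits two inequivalent Hamiltonian cycles. A large fraction of these decomposes as a Cartesian product of smaller Cayley graphs: \cref{lemma:compositeorder} splits $\Gamma$ along its $p$-primary components and \cref{lemma:splittingcayley} then realizes the Cayley graph as the corresponding Cartesian product, so that \cref{thm:cart-prod} or \cref{thm:cartesian-product-K2} immediately yields non-membership in $\mH$. The residual graphs, those not decomposing further, form a short explicit list; for each, I would exhibit two concrete Hamiltonian cycles and certify via a simple invariant (for instance, the multiset of generators traversed along the cycle) that no automorphism relates them.

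The main obstacle is bookkeeping rather than conceptual depth: since $|\Gamma|$ is small, the automorphism group and the set of Hamiltonian cycles of each candidate graph can be computed efficiently (for example via \texttt{nauty}, \texttt{Sage}, or \texttt{GAP}), and transitivity of the induced action on Hamiltonian cycles is a routine check. I would therefore automate the residual verification for the finitely many pairs $(\Gamma, S)$ not already ruled out by the structural results above, rather than treat each of them by hand.
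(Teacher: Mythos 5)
Your proposal matches the paper's own treatment: the authors likewise verify this lemma by exhaustive computer generation of all Cayley graphs of abelian groups of the relevant orders (using Sage), ruling most graphs out by exhibiting two inequivalent Hamiltonian cycles or by comparing the Hamiltonian cycle count to $|\Aut(G)|$, and brute-forcing the remainder. Your additional pre-filtering via \cref{lemma:splittingcayley}, \cref{lemma:compositeorder}, \cref{thm:cart-prod}, and \cref{thm:cartesian-product-K2} is sound (none of those results depend on this lemma, so there is no circularity) but does not change the essential character of the argument.
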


We confirmed this lemma via exhaustive generation of all Cayley graphs using Sage. For most of the graphs, we could easily find two Hamiltonian cycles which could not be mapped onto one another. Further, we excluded some more graphs by counting all their Hamiltonian cycles and comparing this number to the size of the automorphism group. 
For the remaining graphs, a brute-force verification was performed to determine whether they lie in \(\mH\).

\subsection{Cyclic groups}

In this section, we characterize Hamiltonian-transitive Cayley graphs of cyclic groups, where the generating set contains an element whose order equals that of the group.
This later serves as a base for Cayley graphs of abelian groups.

\begin{theorem}\label{thm:cyclic_generator}
	Let~\(\Gamma \cong \Z_n\) and~\(S\) be a generating set of~\(\Gamma\) containing an element of order~\(n\). Let~\(G = \Cay(\Gamma,S)\). Then, the following are equivalent:
	\begin{enumerate}
		\item $ G \in \mH$, \label{item:cyclic_gen_graph_in_H}
		\item every Hamiltonian cycle~$C$ in~$G$ has Hamilton compression~$\kappa(C)=n$,  \label{item:cyclic_gen_rotation_one}
		\item we have $G \in \{K_n, C_n\}$ or $n=2m$ is even and $G=K_{m,m}$.\label{item:cyclic_gen_list_of_graphs_in_H}
	\end{enumerate}
\end{theorem}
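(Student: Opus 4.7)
The plan is to establish the equivalences through the cycle of implications $(3) \Rightarrow (1) \Rightarrow (2) \Rightarrow (3)$.

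The direction $(3) \Rightarrow (1)$ reduces to a check in each family. The cycle $C_n$ has a unique Hamiltonian cycle; for $K_n$ the full symmetric group acts as automorphisms and sends any Hamiltonian cycle to any other; and every Hamiltonian cycle of $K_{m,m}$ alternates between its two parts, so it is determined by a pair of orderings of the parts up to cyclic shift and reversal, and the group $S_m \wr S_2 \leq \Aut(K_{m,m})$ acts transitively on such pairs. The direction $(1) \Rightarrow (2)$ is immediate from \cref{lemma:samekappa}: the natural cycle $C_0 = (0, g, 2g, \dots, (n-1)g)$ is Hamiltonian and rotated by the translation $x \mapsto x + g$, so $\kappa(C_0) = n$, whence every Hamiltonian cycle of $G$ also has compression $n$.

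The main content is the implication $(2) \Rightarrow (3)$. Assuming every Hamiltonian cycle of $G$ has compression $n$, my goal is to identify $G$ as one of the listed graphs. The cases $n \leq 16$ are covered by \cref{lemma:small-groups}, so assume $n \geq 17$. The degenerate possibilities give the expected outcomes: $|S|=2$ forces $G=C_n$, and $S=\Gamma\setminus\{0\}$ forces $G=K_n$. Otherwise, fix $s = kg \in S$ with $2 \leq k \leq n-2$, and from $C_0$ construct a \emph{swap cycle} $C_k$ by deleting the edges $\{0, g\}$ and $\{kg, (k+1)g\}$ and inserting the two $s$-colored edges $\{0, kg\}$ and $\{g, (k+1)g\}$. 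The jump sequence of $C_k$ then uses three distinct colors $g, -g, s$ with multiplicities $n-k-1, k-1, 2$, so no translation rotates $C_k$ by one step, and any automorphism witnessing $\kappa(C_k)=n$ must mix color classes in $G$.

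The main obstacle is to rule out such color-mixing rotating automorphisms except when $G = K_{m,m}$. My plan is to trace the orbit of a single cycle edge under a putative rotating automorphism $\varphi$: since $\varphi$ cyclically permutes the $n$ edges of $C_k$, the color pattern of $C_k$ must be $\varphi$-equivariant, which yields divisibility and arithmetic constraints on $k$ when one compares the three multiplicities $2,\, k-1,\, n-k-1$. Combining these with \cref{lemma:divisibilitylemma}, which constrains the prime divisors of $\kappa(G)$ in terms of the orders of elements of $S$, should force $n = 2m$ to be even, $s$ to be odd in $\Z_{2m}$, and $S$ to consist of precisely all odd elements of $\Z_{2m}$ (any missing odd element would allow yet another swap construction producing a Hamiltonian cycle of smaller compression). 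The resulting graph is exactly $K_{m,m}$. The chief technical hurdle is the combinatorial matching of the cyclic $\varphi$-action with the mixed jump pattern of $C_k$, in particular accommodating potentially exotic (non-dihedral) automorphisms of $\Cay(\Z_n, S)$.
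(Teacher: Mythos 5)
Your implications $(3)\Rightarrow(1)$ and $(1)\Rightarrow(2)$ coincide with the paper's, and your swap cycle $C_k$ is essentially the paper's cycle $C'=(0,k,k+1,\dots,n-1,k-1,\dots,1)$: two long $\pm1$-segments joined by two $\pm k$-edges. The gap lies in how you extract information from the hypothesis $\kappa(C_k)=n$. You propose that ``the color pattern of $C_k$ must be $\varphi$-equivariant'' and that comparing the multiplicities $2$, $k-1$, $n-k-1$ yields arithmetic constraints on $k$; but an automorphism of $\Cay(\Z_n,S)$ need not preserve edge colors (differences) at all --- for $K_{m,m}$ most automorphisms do not --- so no such equivariance holds, and the three multiplicities are intrinsic to the cycle and say nothing about which further chords $G$ possesses. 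The appeal to \cref{lemma:divisibilitylemma} is also vacuous here: $\kappa(G)=n$ is already forced by $C_0$, so its conclusions hold automatically. You correctly identify ``exotic automorphisms'' as the chief hurdle, but your sketch contains no mechanism for overcoming it.

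The missing idea, which is the heart of the paper's argument, is to track a \emph{chord} rather than a cycle edge. The pair $\{k-1,k\}$ is an edge of $G$ (as $1\in S$) but not of $C'$, and its two endpoints lie on different $\pm1$-segments of $C'$, so rotating $C'$ by $y$ moves them in opposite directions: $k\mapsto k+y$ and $k-1\mapsto k-1-y$ for every $y\le\min\{k-1,n-k-1\}$. Since this rotation is by hypothesis an automorphism of $G$, the pair $\{k-1-y,k+y\}$ must be an edge, i.e.\ $2y+1\in S$. Bootstrapping over all $k\in S$ puts every odd residue into $S$; for $n$ odd, inverse-closedness then gives $S=\Z_n\setminus\{0\}$, and for $n$ even either $S$ is exactly the set of odd residues (so $G=K_{m,m}$) or one further swap-and-rotate argument shows every even residue also lies in $S$ (so $G=K_n$). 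Without this chord-tracking step, your proof of $(2)\Rightarrow(3)$ does not close.
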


\begin{proof} 
	We first show that~\eqref{item:cyclic_gen_list_of_graphs_in_H} implies~\eqref{item:cyclic_gen_graph_in_H}.
	Clearly, we have~\(K_n, C_n \in \mH\). 
	To see that also~\(K_{m,m}\in \mH\), let~\(A \sqcup B\) be the bipartition of~\(K_{m,m}\) and observe that every Hamiltonian cycle of~\(K_{m,m}\) alternately uses a vertex of~\(A\) and~\(B\). 
	Since every combination of permutations of~\(A\) and~\(B\) is an automorphism of~\(K_{m,m}\), 
	every Hamiltonian cycle can be mapped to every other Hamiltonian cycle by an automorphism of~\(K_{m,m}\).
	
	For the remaining implications, let us first argue that we can assume $1 \in S$: by assumption, there exists an element $x\in S$ of order $n$. Note that the group automorphism \(\varphi(kx):=k\) induces an isomorphism between $\Cay(\Gamma,S)$ and~$\Cay(\Gamma,\varphi(S))$, so we can assume w.l.o.g. that~$x=1$.
	
	Next, we show that~\eqref{item:cyclic_gen_graph_in_H} implies~\eqref{item:cyclic_gen_rotation_one}.
	Consider the Hamiltonian cycle $C=(0,1,\dots,\linebreak n-1)$ of $G$.
	Since, $y\mapsto y+1$ defines an automorphism of $G$, the cycle $C$ can be rotated by~1 so that the Hamilton compression of $C$ is $n$.
	Since $G\in \mH$, by \cref{lemma:samekappa} every cycle of~$G$ has Hamilton compression $n$.
	
	It remains to show that \eqref{item:cyclic_gen_rotation_one} implies~\eqref{item:cyclic_gen_list_of_graphs_in_H}. 
	Thus from now on, we assume that every Hamiltonian cycle in~$G$ has Hamilton compression~$n$ and we aim to show that~\(G\) is isomorphic to~\(K_n\),~\(C_n\), or~\(K_{m,m}\).
	If~\(G \not \cong C_n\), we have~\(k \in S\) for some~\(2 \leq k < n-1\).
	Consider the Hamiltonian cycle~\(C' = (0,k, k+1, k+2, \dots, n-1, k-1, k-2, \dots, 1)\) in~\(G\). 
	Since~$C'$ has Hamilton compression~$n$, rotating~$C'$ by any~$y\in \N$ yields an automorphism of~$G$. For every $y \in \{1, \dots, \min\{k-1,n-k-1\}\}$, this automorphism maps the edge~$\{k-1,k\}$ to~$\{k-1-y,k+y\}$. 
	The existence of this edge implies that $(k+y)- (k-1-y)=2y+1 \in S$. Hence~$\{1,3,5, \dots, \min\{2k-1, 2(n-k)-1\}\}\subseteq S$.
	
	For every $k\in S$, we have shown that $\{1,3, 5, \dots , \min\{2k-1, 2(n-k)-1\}\}\subseteq S$. Applying this repeatedly, we obtain that $S$ contains all odd numbers up to $n-1$.
	In particular, if~$n$ is odd, we obtain that $S=\{1,2,\dots, n-1\}$ because $S$ is inverse-closed. This means that~$G\cong K_n$.
	
	Now assume that~\(n\) is even.
	In this case, we have~\({A \coloneqq \{1, 3, 5, \ldots, n-1\} \subseteq S}\). If equality holds, we have~\(G \cong K_{m,m}\). 
	Otherwise,~\(G\) is a supergraph of~\(K_{m,m}\) and~\(S\) contains an element~\(2l\) for some~\(l < \frac{n}{2}\).
	We show that, for every~\(l'\leq \frac{n}{2}\), we also have~\(2l'\in S\): 
	Since~\(S\) contains all elements in~$A$, note that every permutation of~\((0,1,2,\dots, n-1)\), where precisely every second element is in~$A$, is a Hamiltonian cycle of~\(G\).
	Let $C'$ be the Hamiltonian cycle obtained from $(0,1,2, \dots, n-1)$ by replacing $2l$ and $2l'$.
	Rotating~$C'$ by~$n-1$ (i.e., by 1 in the other direction) maps the edge~$\{1,2l+1\}$ to~$\{0,2l'\}$, and hence,~\(2l'\in S\). 
	This shows that~\(S=\Gamma\setminus \{0\}\), i.e.,~\(G\cong K_n\).
\end{proof}

\subsection{Layer structure theorem}\label{sec:layer-structure}

In this subsection, we investigate a general framework for graphs containing a grid-like structure and having non-trivial Hamilton compression.
As we explain later, Cayley graphs of abelian groups fulfill these conditions 
and thus fall within the scope of this subsection.
Note that despite the fact that Cartesian products do have a layer structure, they do not necessarily have non-trivial Hamilton compression, so the results from this section cannot be applied in that case.

We begin by formalizing the grid-like structure which we work with.

\begin{definition}[Layer structure]\label{def:layer-structure}
	Let $G$ be a graph and $l \in \N_{\geq 1}$.
	If there is a decomposition $V(G) = V_1 \dcup \dots \dcup V_l$ such that
	\begin{enumerate}
		\item for $i \in \{1, \dots, l-1\}$, $G$ contains a perfect matching between $V_i$ and $V_{i+1}$ that defines an isomorphism between $G[V_i]$ and $G[V_{i+1}]$, and
		\item the graph $K:= G[V_1] \cong \dots \cong G[V_l]$ is Hamiltonian,
	\end{enumerate}
	we say that~$G$ has~\emph{$K$-$l$-layer structure}.
	In this case, we call $G[V_1], \dots ,G[V_l]$ the \emph{layers} of~$G$.
\end{definition}

Note that, if $G$ has $K$-$l$-layer structure, then $G$ has a spanning subgraph isomorphic to~$K \bx P_l$.
Given some ordering of the vertices of $K$ (in the following usually given by a Hamiltonian cycle of $K$), by $v_{j,i}$ we denote the $i$-th vertex in the $j$-th layer of $G$.
Next, we construct a class of Hamiltonian cycles that every graph with layer structure contains.
Since we will only use this construction for graphs of odd order (in particular, $l$ will always be odd), we restrict ourselves to this case.
However, note that this construction can be easily generalized to graphs of even order (but some of the steps later throughout the proofs cannot be generalized for even order).

\begin{definition}[Zigzag cycle]\label{def:zigzag}
	Assume that $G$ has $K$-$l$-layer structure with $l\geq 5$ odd and set $k := |K|$. Let~$C_K = (v_0, \dots, v_{k-1})$ be a Hamiltonian cycle of $K$.
	Given a tuple~$\vec{a} = (a_1, a_2, \dots, a_{(l-3)/2}) \in \{0, \dots, k-2\}^{(l-3)/2}$, we define the \emph{$C_K$-$\vec{a}$-zigzag cycle} as the Hamiltonian cycle illustrated in \cref{fig:zigzag}.
	For a zigzag cycle $C$, we define $\mu(C)$ to be the number of edges in the segment between \(v_{0,k-2}\) and \(v_{l-1,k-1}\) that contains vertex $v_{1,0}$, i.e., the length of the segment consisting of the green, pink, and orange line segments in \cref{fig:zigzag}.
\end{definition}

\begin{figure}
	\centering
	\begin{tikzpicture}[scale=0.8, every node/.style={circle, draw, inner sep=0pt, minimum size=1.8mm}]	
		\foreach \x in {0,1,2,3,4,5} {
			\foreach \y in {0,1,2,3,4,5,6} {
				\node (\x\y) at (\x,\y) {};
			}
		}
		\node [fill] at (4,6) {};
		\node [fill] at (5,0) {};
		
		%s-edges
		\foreach \x in {0,1,2,3,4,5} {
			\foreach \y in {0,1,2,3, 4,5} {
				\draw[gray] (\x\y) to (\x\the\numexpr\y+1\relax);
			}
		}
		
		% paths of c^i
		\foreach \x in {0,1,2,3,4} {
			\foreach \y in {0,1,2,3,4,5,6} {
				\draw[gray] (\x\y) to (\the\numexpr\x+1\relax\y);
			}
		}
		
		% Add the curly bracket on the top
		\draw[decorate,decoration={brace,amplitude=10pt}] 
		(0,6.5) -- (5,6.5) node[midway,yshift=0.7cm, draw=none] {\(k\)};
		%add a_1 and a_2
		\draw[decorate,decoration={brace,amplitude=5pt}] 
		(0.06,4.16) -- (2.94,4.16) node[midway,yshift=3mm, draw=none] {\footnotesize{$a_1$}};
		\draw[decorate,decoration={brace,amplitude=5pt}] 
		(0.06,2.16) -- (1.94,2.16) node[midway,yshift=3mm, draw=none] {\footnotesize{$a_2$}};
		%layers
		\node [draw=none] at (-1.2,6) {\footnotesize{layer 0}};
		\node [draw=none] at (-1.2,5) {\footnotesize{layer 1}};
		\node [draw=none] at (-1.3,0) {\footnotesize{layer $l-1$}};
		
		%Hamiltonian cycle
		\draw[line width=2pt, green!70!black] (46) to (45) to (35) to (25) to (15) to (05) to (04);
		\draw[line width=2pt, magenta!90!black] (04) to (14) to (24) to (34) to (33);
		\draw[line width=2pt, orange!90!black] (33) to (23) to (13) to (03) to (02);
		\draw[line width=2pt, magenta!90!black] (02) to (12) to (22) to (21);
		\draw[line width=2pt, orange!90!black] (21) to (11) to (01) to (00);
		\draw[line width=2pt, green!70!black] (00) to (10) to (20) to (30) to (40) to (50);
		\draw[very thick, blue] (50) to (51) to (41) to (31) to (32) to (42) to (52) to (53) to (43) to (44) to (54) to (55) to (56) to [bend right=15] (06);
		\draw[very thick, blue] (06) to (16) to (26) to (36) to (46);	
	\end{tikzpicture}
	\caption{Zigzag cycle for $l=7$ odd and $\vec{a}=(3,2)$. The filled vertices are the endpoints of the described segment of length~$\mu(C)$.}
	\label{fig:zigzag}
\end{figure}
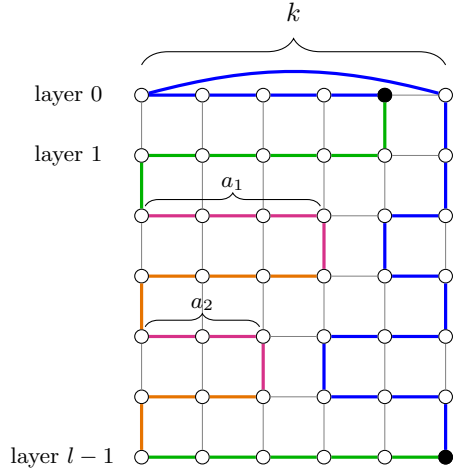

The following lemma is a direct consequence of the definition and shows that, by suitably choosing $\vec{a}$, many desirable values for $\mu(C)$ for a zigzag cycle $C$ can be achieved.

\begin{restatable}{lemma}{lemzigzag}\label{lem:zigzag_mu}
	Let $G$ be a graph with $K$-$l$-layer structure, where $l\geq 5$ is odd, let $k:=|K|$, and let $C$ be the $C_K$-$\vec{a}$-zigzag cycle for some choice of $C_K$ and $\vec{a}$. Then we have
	\begin{equation*}
		\mu(C)=2k-1+2\sum_{i=1}^{(l-3)/2} (a_i+1).
	\end{equation*}
	
	In particular, we obtain that $\mu(C)$ is odd. 
	Even further, for any odd number $\mu^*\in \N$ with~$2k-1+(l-3)\leq \mu^* \leq 2k-1+(l-3)(k-1)$, there is a choice for $\vec{a}$ such that the~$C_K$-$\vec{a}$-zigzag cycle $C$ fulfills~$\mu(C)=\mu^*$.
\end{restatable}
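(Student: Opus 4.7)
The plan is to compute $\mu(C)$ directly from the zigzag construction by decomposing the relevant segment into pieces whose edge-counts can be read off in terms of $k$, $l$, and $\vec{a}$. The parity and realizability claims then follow by elementary arithmetic.

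I would partition the segment from $v_{0,k-1}$ to $v_{l-1,k-1}$ through $v_{1,0}$ into three types of sub-paths, as depicted in \cref{fig:zigzag}: (i) an initial piece consisting of a vertical edge from layer $0$ into layer $1$, a sweep across layer $1$, and a vertical edge from layer $1$ into layer $2$, contributing $1+(k-2)+1=k$ edges in total; (ii) for each $i\in\{1,\dots,(l-3)/2\}$, a \emph{zigzag pair} in layers $2i$ and $2i+1$, consisting of a sweep of length $a_i$, a descent, another sweep of length $a_i$ in the opposite direction, and another descent, contributing $2(a_i+1)$ edges; and (iii) a final sweep across layer $l-1$, contributing $k-1$ edges. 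Summing yields
\begin{equation*}
\mu(C) \;=\; k \;+\; \sum_{i=1}^{(l-3)/2} 2(a_i+1) \;+\; (k-1) \;=\; 2k-1 + 2\sum_{i=1}^{(l-3)/2}(a_i+1),
\end{equation*}
and the parity claim is immediate because the right-hand side equals the odd number $2k-1$ plus an even number.

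For the realizability statement, I would observe that each $a_i+1$ can be chosen independently in $\{1,\dots,k-1\}$, so $\sum_{i=1}^{(l-3)/2}(a_i+1)$ realizes every integer in $[(l-3)/2,\,(l-3)(k-1)/2]$; doubling and shifting by $2k-1$ then gives every odd integer in $[2k-1+(l-3),\,2k-1+(l-3)(k-1)]$, as required.

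The main obstacle I foresee is the careful bookkeeping of off-by-one constants in pieces (i) and (iii): one must verify in particular that the sweep across layer $1$ has length $k-2$ (and not $k-1$) while the final sweep in layer $l-1$ has length $k-1$, the asymmetry arising from the fact that the two vertical edges framing the initial sweep each consume one column position whereas the final sweep is only framed by one vertical edge (on its entry side). Once this accounting is set up cleanly, the remainder of the argument is mechanical counting together with the observation that the $a_i$ can be varied independently.
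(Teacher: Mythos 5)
Your proof is correct and follows essentially the same route as the paper's: the paper likewise decomposes the segment into the top piece of length $k$, the bottom sweep of length $k-1$, and the $(l-3)/2$ zigzag pairs of length $2(a_i+1)$ each, and derives realizability from the fact that each $a_i$ ranges independently over $\{0,\dots,k-2\}$ and changing one $a_i$ by $1$ changes $\mu(C)$ by $2$. Your explicit bookkeeping of the off-by-one constants in the top and bottom sweeps matches the paper's figure-based accounting.
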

\begin{proof}
	Note that the length of the green segment at the top is $k$ and the length of the green segment at the bottom is $k-1$.
	Moreover, the length of each pink segment is $a_i+1$ and the length of the following orange segment is also $a_i+1$.
	Together, we obtain that $\mu(C)$ is as stated.
	The given bounds follow from $0\leq a_i \leq k-2$ for all~$i \in \{1, \dots ,(l-3)/2\}$ and from the observation that, increasing one of the $a_i$ by 1, increases the value of $\mu(C)$ by 2.
\end{proof}

Now, we have all the prerequisites in place to prove the main result of this subsection.

\begin{restatable}{theorem}{layerstructuretheorem}\label{thm:layerstructure}
	Let $G \in \mH$ be a graph of odd order that has $K$-$l$-layer structure and let~$k:=|K|$.
	Assume that one of the following conditions holds
	\begin{enumerate}
		\item $k,l\geq 7$ and $\kappa(G)\geq 5$
		\item $k,l \geq 5$ and, for every prime $p$ with $p|n:=|G|$, we have $p|\kappa(G)$.
	\end{enumerate}
	Then~$K$ is isomorphic to $C_k$ or $K_k$.
\end{restatable}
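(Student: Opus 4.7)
The plan is to argue by contradiction: assume $K \notin \{C_k, K_k\}$ and derive that $G$ admits two Hamiltonian cycles whose Hamilton compressions cannot both equal $\kappa := \kappa(G)$, contradicting Lemma~\ref{lemma:samekappa}. Since $G \in \mH$, every Hamiltonian cycle $C$ of $G$ has compression exactly $\kappa$, so for each such $C$ there is an automorphism $\alpha_C \in \Aut(G)$ of order $\kappa$ that cyclically shifts $C$ by $d := n/\kappa$ positions. This rotation automorphism is the central object I will analyse.

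First, I would fix a Hamiltonian cycle $C_K = (v_0, \dots, v_{k-1})$ of $K$ and consider the family of zigzag cycles $C(\vec{a})$ built from $C_K$ via Definition~\ref{def:zigzag}. Lemma~\ref{lem:zigzag_mu} guarantees that $\mu(C(\vec{a}))$ realizes every odd integer in the interval $[\,2k-1+(l-3),\; 2k-1+(l-3)(k-1)\,]$, and under the size hypotheses $k, l \geq 5$ (or $\geq 7$) this is a long interval. The qualitative feature I would exploit is that $\mu(C)$ is odd while $n$ is odd, so that the complementary arc of length $n - \mu(C)$ is even; varying $\vec{a}$ slides the distinguishing ``pink--orange zigzag'' portion along $G$ while $\kappa$ stays fixed.

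Next, I would analyse $\alpha_C$ acting on $C$. Although $\alpha_C$ need not preserve the layer decomposition $V_1 \dcup \dots \dcup V_l$ setwise, it does permute the cycle edges in orbits of size $\kappa$. Comparing the edge-type profiles of two zigzag cycles with different $\mu$-values yields constraints on which subpaths an automorphism can identify. Since $K \neq C_k$, the graph $K$ contains a chord $\{v_r, v_s\}$ outside $C_K$; I would modify a suitably chosen zigzag cycle to traverse this chord, producing a Hamiltonian cycle $C^*$ whose rotation automorphism $\alpha_{C^*}$ must act incompatibly with those of the unmodified zigzag cycles unless $K$ is highly symmetric. Condition~(1) guarantees an odd prime divisor $p \geq 3$ of $\kappa$, yielding a $p$-rotation of $G$; condition~(2) forces every prime dividing $n$ to divide $\kappa$, so $\kappa$ matches the multiplicative structure of $n$ tightly. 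Either regime provides enough rotational symmetry to turn the joint assumption that $K$ has a chord and (since $K \neq K_k$) a non-edge into a concrete contradiction.

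The main obstacle I foresee is that $\alpha_C$ may be any graph automorphism, not just one respecting the layer structure, so I cannot directly argue that it permutes layers among themselves. To overcome this, I would hunt for graph-theoretic invariants of the zigzag cycle that are preserved by \emph{any} automorphism of $G$: natural candidates are the parity of $\mu(C)$, which pins down a distinguished midpoint edge of the $\mu$-segment that $\alpha_C$ must send to the midpoint of some shifted copy, and the multiset of distances (along $C$) between specific cycle edges that are forced to exist in $G$. Combining these invariants with the freedom to vary both $\vec{a}$ and $C_K$, I expect to pin down the adjacency pattern of $K$ so tightly that $K$ must be either $C_k$ or $K_k$.
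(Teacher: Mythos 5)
Your proposal is a plan rather than a proof, and it is missing the mechanism that actually makes the argument work. The paper's proof does not derive a contradiction by directly comparing rotation automorphisms of differently modified zigzag cycles. Instead, it shows that \emph{every} Hamiltonian cycle $C_K$ of $K$ satisfies $\kappa(C_K)=k$, which by the cited result of Gregor et al.\ forces $K$ to be a Cayley graph of a cyclic group with a generator of order $k$, and then Theorem~\ref{thm:cyclic_generator} (together with $k$ odd, which rules out $K_{m,m}$) yields $K\in\{C_k,K_k\}$. The two concrete ingredients you do not have are: (a) an arithmetic argument showing that $\vec{a}$ can be chosen so that $\mu(C)$ is an \emph{odd multiple} of $\alpha=n/\kappa(G)$ --- this is precisely where hypotheses (1) and (2) enter, via a case analysis on the length of the interval $[\mu_{\min},\mu_{\max}]$ from Lemma~\ref{lem:zigzag_mu} relative to $\alpha$; and (b) the observation that rotating the zigzag cycle by such a $\mu(C)$ sends $v_{0,j}$ to $v_{l-1,j+1\bmod k}$, hence induces an automorphism of $K$ rotating $C_K$ by one step. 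Point (b) also resolves the obstacle you flag: one never needs the rotation to preserve the layer decomposition globally; the zigzag construction guarantees that this particular shift carries layer $0$ onto layer $l-1$ with an index shift of exactly one.

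Your alternative route --- extracting a contradiction from the coexistence of a chord and a non-edge in $K$ by comparing ``edge-type profiles'' and midpoint invariants of various cycles --- is not developed to the point where one can check it, and it would in effect require re-proving Theorem~\ref{thm:cyclic_generator} inside the layer structure with far weaker control over the automorphisms (a rotation of $G$ only indirectly constrains $K$). The phrases ``must act incompatibly \ldots unless $K$ is highly symmetric'' and ``I expect to pin down the adjacency pattern'' are exactly the places where a proof is needed and none is given. As written, the proposal has a genuine gap.
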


\begin{proof}
	The strategy is to show that, for every Hamiltonian cycle $C_K$ of $K$, we have \hbox{$\kappa(C_K) = k$}. 
	By~\cite[Section~1.4]{GRE24}, this implies that $K$ is the Cayley graph of a cyclic group and the generating set contains an element of order $k$.
	The assertion then follows from Theorem~\ref{thm:cyclic_generator}.
	
	Fix a Hamiltonian cycle $C_K$ of $K$.
	To prove that $\kappa(C_K) = k$, we first show the following claim.
	\begin{restatable}{claim}{zigzagclaim}\label{claim:zigzag_multiple}
		There is a choice for~$\vec{a}$ such that the~$C_K$-$\vec{a}$-zigzgag cycle $C$ fulfills that $\mu(C)$ is a multiple of $\alpha:=n/\kappa(G)$.
	\end{restatable}
	\begin{proof}[Proof of \cref{claim:zigzag_multiple}]
		Since $n$ is odd, $\alpha$ is odd as well.
		By \cref{lem:zigzag_mu}, it suffices to show that there is some odd $m \in \N$ such that $m \alpha \in [\mu_{\text{min}}, \mu_{\text{max}}]$, where
		$\mu_{\text{min}}:=2k-1+l-3$ and~$\mu_{\text{max}}=2k-1+(l-3)(k-1)=(l-1)(k-1)+1$.
		
		First, assume that condition (1) in the Theorem statement is fulfilled.
		Note that then $\alpha=n/\kappa(G)\leq n/5$. We have
		\begin{align*}
			\mu_{\text{max}}-\mu_{\text{min}}=(l-3)\cdot (k-2)
			\geq \frac{4}{7}l \cdot \frac{5}{7}k
			= \frac{20}{49} n \geq \frac{2}{5} n \geq 2 \alpha.
		\end{align*}
		In other words, the interval $[\mu_{\text{min}}, \mu_{\text{max}}]$ contains at least two multiples of $\alpha$, in particular an odd multiple, which completes the proof in case that condition (1) holds.
		
		Now, assume that condition (2) holds, but (1) does not.
		This means that one of $k,l$ equals~5, or that $\kappa(G)=3$ (note that $k,l,\kappa(G)$ are odd and $\kappa(G)=1$ is not possible, because there is some prime number that divides $\kappa(G)$).
		We begin with the case that one of $k,l$ equals 5.
		Then $5|n$ so that~$\kappa(G)=5m'$ for some odd $m'\in \N$ such that $\alpha m' = n/5$.
		We show that for $m=3m'$, we have $m\alpha=3m'\alpha=3n/5=3kl/5 \in [\mu_{\text{min}},\mu_{\text{max}}]$. For this, note that	
		\begin{align*}
			\mu_{\text{min}}=2k+l-4 \leq 3 \max(k,l) \leq \frac{3}{5} kl
			\leq \frac{4}{5} k \cdot \frac{4}{5} l
			\leq (k-1)(l-1)+1=\mu_{\text{max}}.
		\end{align*}
		Now, it only remains to investigate the case in (2) where $\kappa(G)=3$.
		Since every prime number that divides $n=kl$ also divides $\kappa(G)$, note that $\kappa(G)=3$ is only possible if $k$ and $l$ are powers of~3.
		In particular, $k,l\geq 9$.
		If $k=l=9$, we have $\alpha=27$, $\mu_{\text{min}}=23, \mu_{\text{max}}=65$, so that the assertion holds with $m=1$.
		Therefore, assume that one of $k,l$ is greater than 9, i.e., at least~27.
		Note that then
		\begin{equation}\label{eq:mumax}
			\mu_{\text{max}}\geq (l-3)(k-1)\geq \frac{6}{9}l \cdot \frac{7}{9} k \geq \frac{1}{3} kl = \frac{1}{3} n \geq \alpha
		\end{equation}
		and	
		\begin{align}\label{eq:3mumin}
			3\cdot \mu_{\text{min}}&=3(2k-1+l-3)=6k+3l-12\leq 6k+3l\nonumber\\ 
			&\leq  9 \max(k,l) - 3\left(\max(k,l)-\min(k,l)\right) \leq kl-k-l+1=\mu_{\text{max}},
		\end{align}
		where the last inequality holds for $\max(k,l)\geq 27$ and $\min(k,l)\geq 9$.
		The inequalities \eqref{eq:mumax} and \eqref{eq:3mumin} together show that one can choose $m$ to be a power of~3.
		This completes the proof of the claim. 
	\end{proof}
	Since $G\in \mH$, every Hamiltonian cycle of $G$, in particular~$C$, is $\kappa(G)$-symmetric.
	Therefore, rotating $C$ by a multiple of~$\alpha$ defines an automorphism of~$G$.
	Together with \cref{claim:zigzag_multiple}, this means that~$C$ can be rotated by $\mu(C)$ (in counter-clockwise direction in \cref{fig:zigzag}).
	This maps the first layer of $G$ bijectively to the last, inducing an automorphism of $K$.
	More precisely, $v_{0,j}$ is mapped to~$v_{l-1,j+1 \bmod k}$. 
	The induced automorphism of $K$ then maps $v_j$ to~$v_{j+1 \bmod k}$, i.e., it rotates~$C_K$ by 1. Therefore, we have $\kappa(C_K)=k$.
\end{proof}

%%%%%%%%%%%%%%%%%%%%%%%%%%%%%%%%%%%%%%%%%%%%%%%%%%%%%%
%%%subsection: Cayley graphs with layer structure
%%%%%%%%%%%%%%%%%%%%%%%%%%%%%%%%%%%%%%%%%%%%%%%%%%%%%%

\subsection{Cayley graphs of odd order}
\label{subsec:cayleyodd}

In this subsection, we prove our main result. We begin by restating it.

\thmoddcayleygraphs*

\label{sec:group-induced-layer-structure}
As a first step, let us argue that Cayley graphs of abelian groups have a natural layer structure, which allows us to use results from the previous subsection.

\begin{observation}\label{obs:group-induced-layer}
	Let $G$ be the Cayley graph of an abelian group $\Gamma$ with generating set $S$.
	Assume there is~$S'\subseteq S$ such that $\langle S' \rangle \cap S = S'$ and $|\langle S' \rangle|\geq 3$.
	Let $\Delta:=\langle S' \rangle$, $K:=\Cay(\Delta, S')$ and~$l:=|\Gamma:\Delta|$.
	Then $G$ has $K$-$l$-layer structure (\cref{def:layer-structure}), where the layers are given by the cosets of $\Delta$.
	We call such a layer structure \textit{group-induced}.
\end{observation}

\begin{proof}
	To see this, note that the cosets of $\Delta$, given by~$\{\gamma+\Delta: \gamma \in \Gamma\}$, provide a partition of the vertices of $G$ into $l$ sets.
	Since $\langle S' \rangle \cap S = S'$, for each of these sets, the induced subgraph is~$G[\gamma+\Delta]\cong\Cay(\Delta, S')=K$.
	Moreover, since $K$ is the Cayley graph of an abelian group on at least~3 vertices, it is Hamiltionian.
	Similarly, $G':=\Cay(\Gamma/\Delta, S \setminus S')$ is the Cayley graph of an abelian group and therefore traceable, i.e., it contains a Hamiltonian path~$(\gamma_1+\Delta, \dots, \gamma_l+\Delta)$.
	This means that, for $i \in \{1, \dots, l-1\}$, there exists some $s_i \in S\setminus S'$ such that $s_i+\gamma_i+\Delta=\gamma_{i+1}+\Delta$.
	Then the edges of $G$ corresponding to~$s_i$ form a perfect matching between $\gamma_i +\Delta$ and~$\gamma_{i+1} +\Delta$. Moreover, since $x \mapsto x+s_i$ is an automorphism of~$G$, the matching defines an isomorphism between $\gamma_i +\Delta$ and $\gamma_{i+1} +\Delta$.
	Therefore,~$G$ has~$K$-$l$-layer structure as stated.
\end{proof}

In the next lemma, we make use of this structure and our results from the previous~section.

\begin{restatable}{lemma}{lemlayercayley}\label{lemma:layercayley}
	Let $\Gamma$ be an abelian group of odd order with generating set $S$. Suppose that the Cayley graph $G = \Cay(\Gamma, S)$ is Hamiltonian-transitive and $\kappa(G) > 1$.
	Assume that $G$ has a group-induced $K$-$l$-layer structure with $|K|,l\geq 5$.
	Then $K$ is complete or a cycle.
\end{restatable}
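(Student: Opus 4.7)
The plan is to apply \cref{thm:layerstructure} via its condition~(2). Since $|K|, l \geq 5$ is already assumed, the only thing left to verify is that every prime divisor of $n := |\Gamma|$ divides $\kappa(G)$; the theorem will then deliver the desired conclusion, with no need to invoke condition~(1) at all.

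To establish this divisibility, I would proceed by contradiction. Assume that some prime $p$ divides $n$ but not $\kappa(G)$. Then \cref{lemma:divisibilitylemma}(1) rules out any element of $S$ of order $pm$ with $m>1$, so every $s \in S$ has order either equal to $p$ or coprime to $p$. This puts us in position to apply \cref{lemma:compositeorder}, yielding a decomposition $\Gamma = \langle S_p \rangle \times \langle S'_p \rangle$, where $S_p \subseteq S$ collects the generators whose order is divisible by $p$ and $\langle S_p \rangle$ is an elementary abelian $p$-group. Note that $S_p$ is non-empty: Cauchy's theorem furnishes an element of order $p$ in $\Gamma$, and expressing it as a $\Z$-linear combination of $S$ forces some $s \in S$ to have order divisible by $p$.

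The argument then splits into two subcases. If $S'_p = \emptyset$, then $\Gamma = \langle S_p \rangle$ is elementary abelian, so $n = p^r$ for some $r \geq 1$; since $\kappa(G) > 1$ divides $p^r$, it must be divisible by $p$, contradicting the assumption. If instead $S'_p \neq \emptyset$, then \cref{lemma:splittingcayley} lets me write $G \cong \Cay(\langle S_p\rangle, S_p) \bx \Cay(\langle S'_p\rangle, S'_p)$ as a Cartesian product of two Hamiltonian Cayley graphs of abelian groups of odd order at least $3$. Their orders are coprime, so no Cartesian prime factor can appear in both graphs, meaning the two factors are relatively prime; \cref{lem:prime-not-in-H} then forces $G \notin \mH$, a contradiction. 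The main obstacle is recognizing that the hypothesis $\kappa(G) > 1$ is precisely what rescues the degenerate $S'_p = \emptyset$ case, where no non-trivial Cartesian decomposition is available; the remaining work is simply orchestrating the interplay of \cref{lemma:divisibilitylemma,lemma:compositeorder,lemma:splittingcayley,lem:prime-not-in-H}.
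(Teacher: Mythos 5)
Your proposal is correct and follows essentially the same route as the paper: reduce to condition~(2) of \cref{thm:layerstructure}, then establish the divisibility $p \mid \kappa(G)$ for every prime $p \mid n$ using \cref{lemma:divisibilitylemma}, \cref{lemma:compositeorder}, \cref{lemma:splittingcayley}, and \cref{lem:prime-not-in-H}. The only differences are cosmetic (you phrase the divisibility step as a contradiction and are slightly more explicit about the degenerate cases $S_p \neq \emptyset$ and $S'_p = \emptyset$, which the paper handles implicitly).
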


\begin{proof}
	Let $S'$ and $\Delta$ be as described in \cref{obs:group-induced-layer} corresponding to the $K$-$l$-layer structure with $|K|,l\geq 5$.
	Our strategy is to show that condition (2) in \cref{thm:layerstructure} is fulfilled, which then immediately implies the assertion. For this, it is only left to prove that, for every prime number $p$ that divides $|G|$, $p$ also divides~$\kappa(G)$.
	
	First, assume that $S$ contains an element of order $pm$ for some $m > 1$.
	By Lemma~\ref{lemma:divisibilitylemma}, $p$ then divides $\kappa(G)$.
	Therefore, assume from now on that
	the order of every element in $S$ is either $p$ or coprime to $p$.
	Let $S_p$ contain the elements of $S$ of order~$p$ and $S_{p'}$ the elements of order coprime to $p$.
	By Lemma~\ref{lemma:compositeorder}, we obtain~$\Gamma =  \langle S_p \rangle \times \langle S_{p'} \rangle$ and 
	by Lemma~\ref{lemma:splittingcayley}, it follows~$G = \Cay(\langle S_p \rangle, S_p) \bx \Cay(\langle S_{p'} \rangle, S_{p'})$.
	
	Note that $| \langle S_p \rangle |$ is a power of $p$ (if $| \langle S_p \rangle |$ contains some other prime factor $q\neq p$, $\langle S_p \rangle$ contains an element of order $q$, which is a contradiction). 
	Similarly,  $| \langle S_{p'} \rangle |$ is coprime to $p$.
	Therefore, $\Cay(\langle S_p \rangle, S_p)$ and $\Cay(\langle S_{p'} \rangle, S_{p'})$ cannot contain the same factor in a Cartesian product decomposition, i.e., they are relatively prime.
	In addition, since they are Cayley graphs of abelian groups, they are Hamiltonian.
	Since~$G = \Cay(\langle S_p \rangle, S_p) \bx \Cay(\langle S_{p'} \rangle, S_{p'})$ is Hamiltonian-transitive, Lemma~\ref{lem:prime-not-in-H} implies that one of the factors is trivial.
	
	Since we assumed that $p$ divides $|G|$, $S_p$ cannot be trivial, i.e., we have $S=S_p$.
	We obtain that~$|\Gamma|=|\langle S_p \rangle |$ is a power of $p$.
	Then~$p$ divides $\kappa(G)$ because~$\kappa(G)$ divides~$|G|=|\Gamma|$ and we have $\kappa(G) > 1$ by assumption.
	Therefore, condition (2) in  \cref{thm:layerstructure} is fulfilled, which completes the proof of the lemma.
\end{proof}

In the following two results, we build on \cref{lemma:layercayley} and further resolve the cases where~$K$ is complete or a cycle.
Note in the next lemma that we do not require the graph to be of odd order, which will allow us to reuse the statement for groups of even order.

\begin{restatable}{lemma}{lemcayleycomplete}\label{lemma:cayleycomplete}
	Let $\Gamma$ be an abelian group with generating set $S$. 
	Suppose that the Cayley graph~$G = \Cay(\Gamma,S)$ is Hamiltonian-transitive and  $\kappa(G) > 1$.
	Assume that $G$ has a group-induced $K$-$l$-layer structure where $K$ is a complete graph with $|K|\geq 5$.
	Then $G$ is complete.
\end{restatable}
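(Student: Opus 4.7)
I would proceed by contradiction, assuming that $G$ is not complete. Since $K = K_k$ forces $\Delta \setminus \{0\} \subseteq S$ (where $\Delta := \langle S' \rangle$ has order $k \geq 5$), noncompleteness entails $l := [\Gamma : \Delta] \geq 2$ and $S \setminus S' \subsetneq \Gamma \setminus \Delta$.

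The main tool I would employ is the zigzag cycle construction from the proof of \cref{thm:layerstructure}. Because $K$ is complete, \emph{any} cyclic ordering of $\Delta$ is a valid Hamiltonian cycle $C_K$ of $K$, which provides significant flexibility. Using $\kappa(G) > 1$, \cref{claim:zigzag_multiple} yields a zigzag cycle $C$ whose $\mu(C)$ is a multiple of $|\Gamma|/\kappa(G)$, so rotation of $C$ by $\mu(C)$ is an automorphism $\rho$ of $G$ sending the layer at $0$ to the final layer via $v_{0,j} \mapsto v_{l-1, j+1 \bmod k}$. Varying $C_K$ over all cyclic orderings of $\Delta$ and composing with translations (always in $\Aut(G)$) produces a rich family of automorphisms of $G$, which the plan is to leverage to force $S \setminus S' = \Gamma \setminus \Delta$.

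In the favorable case where $\Gamma$ admits a direct-sum decomposition $\Gamma = \Delta \oplus \Lambda$ with $S \setminus S' \subseteq \Lambda$, the zigzag-derived automorphisms should yield $G \cong K_k \bx \Cay(\Lambda, S \setminus S')$ via \cref{lemma:splittingcayley}, from which the contradiction follows by splitting into subcases. For $l \geq 3$, the factor $\Cay(\Lambda, S \setminus S')$ is a Hamiltonian Cayley graph of an abelian group, so \cref{cor:cart-prod-decomp} contradicts $G \in \mH$. For $l = 2$, the decomposition gives $G \cong K_k \bx K_2$, and \cref{thm:cartesian-product-K2} forces $K_k$ to be an odd cycle or $C_4$, contradicting that $K$ is complete on $k \geq 5$ vertices. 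In the unfavorable case, where $\Delta$ has no complement in $\Gamma$ (which may occur e.g.\ for cyclic groups with prime power factors), a separate argument is required, relying on the action of $\rho$ on cosets of $\Delta$ together with the rigidity imposed by $\kappa(G) > 1$.

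The main obstacle is this last case, where the Cartesian product decomposition is unavailable and the graph-level symmetries must be translated directly into statements about $S$. Because automorphisms of abelian Cayley graphs need not be group-theoretic, this translation is delicate and likely requires combining the zigzag rotations with additional Hamiltonian cycle constructions (for instance, cycles built from individual generators of $\Gamma$ contained in $S$) to force every element of $\Gamma \setminus \Delta$ into $S$. A secondary challenge is that for small $l$ (particularly $l = 2$) the zigzag construction may not apply directly, necessitating tailored arguments via \cref{thm:cartesian-product-K2} and \cref{lem:prime-not-in-H}.
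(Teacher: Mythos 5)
There is a genuine gap: your proposal correctly identifies the starting tool (a zigzag cycle $C$ with $\mu(C)$ a multiple of $\alpha=|G|/\kappa(G)$, so that rotation by $\mu(C)$ is an automorphism), but it never executes the step that actually proves the lemma. You state that the plan is ``to leverage'' the rotations ``to force $S\setminus S'=\Gamma\setminus\Delta$'' and then substitute a Cartesian-product detour for that argument. That detour only works under two unjustified hypotheses at once: that $\Delta$ has a direct complement $\Lambda$ in $\Gamma$ \emph{and} that $S\setminus S'\subseteq\Lambda$. The second condition is a restriction on $S$, not on $\Gamma$, and there is no reason for it to hold (e.g.\ $\Gamma=\Z_5\times\Z_3$, $\Delta=\Z_5\times\{0\}$, with $(1,1)\in S$). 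You explicitly concede that the remaining case ``requires a separate argument,'' but that remaining case is precisely where the content of the lemma lies, so the proposal is a plan with its central step missing rather than a proof.

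For comparison, the paper's proof never decomposes $G$ as a product. It argues in two stages, both at the graph level. First, if some edge between two adjacent layers were missing, say $\{u,s+v\}\notin E(G)$ with $u,v\in\Delta$ and $s\in S$, one uses the completeness of $K$ to route a zigzag cycle so that $u$ and $s+v$ are at distance $2$ on the cycle and, after rotating by the appropriate multiple of $\alpha$, land in a common layer; since that layer is complete, their images are adjacent, contradicting that the rotation is an automorphism preserving non-adjacency. This forces all edges between any two layers that are adjacent along a Hamiltonian path in the quotient $G'$, hence (as $G'$ is an abelian Cayley graph) between all layers adjacent in $G'$. Second, if $G'$ itself were not complete, one builds a Hamiltonian cycle of $G$ that changes layer at every step; every pair of vertices at cycle-distance $l$ is then adjacent, yet by routing the cycle from a layer $i$ to a non-adjacent layer $j$ in exactly $l$ steps one exhibits a pair at distance $l$ that is not adjacent, a contradiction. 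Your proposal contains neither the ``rotate a non-edge into a complete layer'' argument nor any mechanism for handling the completeness of the quotient, so it does not close the proof.
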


\begin{proof}
	Let $S'$ and $\Delta:=\langle S' \rangle$ be as described in \cref{obs:group-induced-layer} giving the layer structure as in the assumption of the lemma.
	Recall that the layers are the cosets of~$\Delta$, i.e., the elements of the group $\Gamma':=\Gamma/\Delta$ and there is an edge between two layers if there is an edge in the graph $G':=\Cay(\Gamma', S\setminus S')$ between the corresponding vertices.
	
	As a first step, we argue that, if there exists an edge between two layers, all edges between the two layers are present.
	So fix two neighboring layers $L_1, L_2 \in G'$ and assume they contain vertices (of $G$) $u\in L_1$ and $v\in L_2$.
	We show that $u$ and $v$ are adjacent by arguing that there exists a Hamiltonian cycle in~$G$ in which $u$ and $v$ are mapped to the same layer when rotating the cycle by some multiple of \(\alpha:=n/\kappa (G)\). 
	
	To construct such a cycle, note that, since layers $L_1$ and $L_2$ are neighboring and each layer is complete, there exists a vertex $w$ in layer $L_1$ that is adjacent to both $u$ and $v$.
	The cyles that we construct will all contain the segment $(v,w,u)$.
	Moreover, since~$G'$ is a Cayley graph of an abelian group, $G'$ contains a Hamiltonian path starting with the edge~$\{L_1, L_2\}$ (see~\cite{CHE81}), i.e., we can assume that $L_1$ is the first layer and $L_2$ is the second layer in the group-induced layer structure.
	To construct the desired Hamiltonian cycle, we distinguish the cases \(l=2\) and \(l\geq 3\). We begin with~\(l=2\). In this case, by~\cref{lemma:divisibilitylemma}, \(\kappa(G)\) is even and hence, a rotation of any cycle by \(n/2=k\) is possible. Thus, observe that the cycle depicted in~\cref{fig:l=2tocomplete} has the desired property. 
	
	Next, we consider the case \(l\geq 3\). Note that, since $\kappa(G)>1$, we have $\alpha\leq n/2=lk/2\leq (l-1)(k-1)$, where we have used~$l\geq 3$ and~$k\geq 5$.
	In particular, every cycle can be rotated by some value~\(x\) with $x\leq (l-1)(k-1)$ and $x\geq (l-1)(k-1)/2\geq l+1$ (where we have again used that $l\geq 3, k\geq 5$).
	We can construct a Hamiltonian cycle as follows (depicted in \cref{fig:l=3tocomplete} for $l=3$): Start with the segment $(v,w,u)$. 
	Then, in each of the layers 1 to $l-2$, visit a suitable number of vertices between 0 and $k-1$. In the final layer, visit all vertices consecutively, and then extend the path to a valid Hamiltonian cycle.
	It is easy to see that the number of vertices visited in the middle layers can be chosen such that $u$ and $v$ are both mapped to the last layer.
	
	So far we have established that, if two layers of $G'$ are adjacent, all of their vertices in $G$ are adjacent.
	It remains to show that $G'$ is complete.
	If $l=|G'|\leq 2$, there is nothing to do, so assume $l\geq 3$.
	Since $G'$ is the Cayley graph of an abelian group on at least~3 vertices, it is Hamiltonian.
	Number the layers such that $(L_1, \dots, L_l)$ is a Hamiltonian cycle of the layers.
	Since we have already established that all vertices of $G$ in consecutive layers are adjacent, note that the following is a Hamiltonian cycle: $C:=(v_{0,0}, \dots, v_{0,l-1}, v_{1,0}, \dots, v_{1,l-1}, \dots, v_{k-1,0}, \dots, v_{k-1,l-1})$.
	Observe that this cycle has the property that every pair of vertices that have distance $l$ along the cycle, is adjacent in~$G$.
	Now, we construct another cycle $C'$ that does not have this property.
	Since $G'$ is not complete, there are two layers $L', L''$ which are not connected.
	Starting in layer $L'$, follow the Hamiltonian cycle of $G'$ in the direction in which $L''$ is reached only after at least $l/2$ steps.
	By using between one and two vertices from each layer along this route, we obtain a Hamiltonian cycle of \(G\) in which two vertices from layers \(L'\) and \(L''\) have distance exactly~\(l\).
	Since they are not adjacent by assumption, this cycle cannot be mapped to cycle~$C$ contradicting that~$G$ is Hamiltonian-transitive.
	Hence, $G'$ is complete, which completes the proof that $G$ is complete.
\end{proof}

\begin{figure}
\centering
\begin{tikzpicture}[scale=1, every node/.style={circle, draw, inner sep=0pt, minimum size=3.5mm}]
	\foreach \x in {0,1,2,3,4,5,6} {
		\node[style=vertex] (v\x) at (\x,1) {};
		\node[style=vertex] (u\x) at (\x,0) {};
	}
	\foreach \x in {0,1,2,3,4,5,6} {
		\draw[gray] (v\x) to (u\x);
	}
	\node[fill=white] (v1) at (1,1) {\footnotesize{$v$}};
	\node[fill=white] (u0) at (0,0) {\footnotesize{$u$}};
	\node[fill=white] (w) at (0,1) {\footnotesize{$w$}};
	
	\draw[blue, very thick] (v1) to (w) to (u0) to [bend right=20] (u3)  (u3) to (u4) to (u5) to (u6) to (v6) to (v5) to (v4) to (v3) to (v2) to (u2) to (u1) to (v1);
	
	\draw[gray] (v1) to (v2);
	\draw[gray] (u0) to (u1);
	\draw[gray] (u2) to (u3);
	\node[circle, draw, fill=black, inner sep=0pt, minimum size=2mm] at (6,1) {};
	\node[circle, draw, fill=black, inner sep=0pt, minimum size=2mm] at (4,1) {};
\end{tikzpicture}
\caption{Hamiltonian cycle in the proof of Lemma~\ref{lemma:cayleycomplete} for \(l=2\) and \(k\geq 5\), depicted for $k = 7$. A rotation of the cycle by \(k=n/2\) maps \(u\) and \(v\) to the two filled vertices in the upper layer.}
\label{fig:l=2tocomplete}
\end{figure}

\begin{figure}
\centering
\begin{tikzpicture}[scale=1, every node/.style={circle, draw, inner sep=0pt, minimum size=3.5mm}]
	\foreach \x in {0,1,2,3,4} {
		\node[style=vertex] (v\x) at (\x,1) {};
		\node[style=vertex] (u\x) at (\x,0) {};
		\node[style=vertex] (w\x) at (\x,-1) {};
	}
	\node[fill=white] (v1) at (1,1) {\footnotesize{$v$}};
	\node[fill=white] (u0) at (0,0) {\footnotesize{$u$}};
	\node[fill=white] (v0) at (0,1) {\footnotesize{$w$}};
	
	\foreach \x in {0,1,2,3,4} {
		\draw[gray] (u\x) to (w\x);
		\draw[gray] (v\x) to (u\x);
	}
	%Hamiltonian cycle
	\draw[blue, very thick] (v0) to (u0);
	\draw[LimeGreen, line width=2pt] (u0) to (w0);
	\draw[blue, very thick] (w0) to (w1) to (w2) to (w3) to (w4);
	\draw[blue, very thick] (w4) to (u4);
	\draw[blue, very thick] (u4) to [bend right = 30] (u1);
	\draw[blue, very thick] (u1) to (u2) to (u3) to (v3) to (v4);
	\draw[blue, very thick] (v4) to [bend right = 30] (v2);
	\draw[blue, very thick] (v2) to (v1) to (v0);
	\node[circle, draw, fill=black, inner sep=0pt, minimum size=2mm] at (1,-1) {};
	\node[circle, draw, fill=black, inner sep=0pt, minimum size=2mm] at (3,-1) {};
\end{tikzpicture}
\hspace{1cm}
\begin{tikzpicture}[scale=1, every node/.style={circle, draw, inner sep=0pt, minimum size=3.5mm}]
	\foreach \x in {0,1,2,3,4} {
		\node[style=vertex] (v\x) at (\x,1) {};
		\node[style=vertex] (u\x) at (\x,0) {};
		\node[style=vertex] (w\x) at (\x,-1) {};
	}
	\node[fill=white] (v1) at (1,1) {\footnotesize{$v$}};
	\node[fill=white] (u0) at (0,0) {\footnotesize{$u$}};
	\node[fill=white] (v0) at (0,1) {\footnotesize{$w$}};
	\node[circle, draw, fill=black, inner sep=0pt, minimum size=2mm] at (0,-1) {};
	\node[circle, draw, fill=black, inner sep=0pt, minimum size=2mm] at (2,-1) {};
	\foreach \x in {0,1,2,3,4} {
		\draw[gray] (u\x) to (w\x);
		\draw[gray] (v\x) to (u\x);
	}
	%Hamiltonian cycle
	\draw[blue, very thick] (v0) to (u0);
	\draw[LimeGreen, line width=2pt] (u0) to (u1) to (u2) to (u3) to (w3);
	\draw[blue, very thick] (w3) to (w2) to (w1) to (w0);
	\draw[blue, very thick] (w0) to [bend left = 20] (w4);
	\draw[blue, very thick] (w4) to (u4) to (v4) to (v3) to (v2) to (v1) to (v0);
\end{tikzpicture}
\caption{Hamiltonian cycles in the proof of Lemma~\ref{lemma:cayleycomplete} for \(l=3\) depicted for $k = 5$ and two different values of $x\in\{k-1, \dots, 2k-2\}$.
	The cycles are constructed such that the length of the green thick segment is $x-k+2\in\{1, \dots, k\}$.
	We have used for the construction that each layer is a complete graph. Observe that rotation by $x$ maps $u$ to the second-to-last vertex used by the cycle in the last layer, and $v$ to the fourth-to-last vertex (marked by filled vertices).
}
\label{fig:l=3tocomplete}
\end{figure}

Next, we resolve the case where each layer is a cycle.

\begin{restatable}{lemma}{lemcayleycycle}\label{lemma:cayleycycle}
Let $\Gamma$ be an abelian group of odd order with generating set $S$. 
Suppose that the Cayley graph $G = \Cay(\Gamma,S)$ is Hamiltonian-transitive and  $\kappa(G) > 1$. Assume that $G$ has a group-induced $K$-$l$-layer structure with $k\coloneqq|K|\geq 5$ and $l\geq5$. Then we have $K \not \cong C_k$. 
\end{restatable}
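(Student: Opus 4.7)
The plan is to derive a contradiction by assuming $K \cong C_k$. In that case, $S \cap \Delta = \{s,-s\}$ for some $s \in \Gamma$ of order $k$, and the edges within each layer are precisely the $\pm s$-edges, forming a $k$-cycle per layer. All other generators lie in $S \setminus S'$ and produce edges between distinct cosets of $\Delta$.

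First, I would imitate the proof of \cref{thm:layerstructure}: using $\kappa(G) > 1$, construct a $\kappa(G)$-symmetric zigzag cycle $C^*$ in $G$ with $\mu(C^*)$ equal to a multiple of $\alpha := n/\kappa(G)$ (which is possible by \cref{claim:zigzag_multiple}, whose hypotheses are ensured by the argument already used in \cref{lemma:layercayley}). Rotating $C^*$ by $\mu(C^*)$ gives an automorphism $\psi \in \Aut(G)$ that bijectively sends layer $0$ to layer $l-1$ and, restricted to the $C_k$-structure of each layer, acts as a one-step rotation. Iterating $\psi$ and composing with the regular translation action of $\Gamma$ produces a large family of automorphisms that simultaneously permute layers and rotate along the $\pm s$ direction.

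Second, since $\Gamma$ is abelian, every matching edge corresponding to $t \in S \setminus S'$ shifts the $C_k$-labeling between consecutive layers by a fixed constant; combined with $\psi$, this forces a strong translation-compatibility of the entire inter-layer structure. I would exploit this to show that the set of $\pm s$-edges is a distinguished edge class of $G$ (for instance, as the unique partition of $G$ into $l$ vertex-disjoint $k$-cycles compatible with all the automorphisms in the family built in step one), and is therefore preserved by \emph{every} automorphism of $G$ that acts on our two cycles of interest below.

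Third, I would construct two concrete Hamiltonian cycles. Let $C_1$ be a ``snake'' cycle that traverses each layer's $k$-cycle almost completely using only $\pm s$-edges, joined by exactly $l-1$ inter-layer edges; then $C_1$ contains $l(k-1)$ edges from the $\pm s$-class. Let $C_2$ be the zigzag cycle from step one (or a similar zigzag construction); by a direct count as in \cref{lem:zigzag_mu}, $C_2$ contains strictly fewer $\pm s$-edges whenever $k,l \geq 5$. Combined with the invariance of the $\pm s$-class under automorphisms (established in step two), this contradicts $G \in \mH$.

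The main obstacle is step two: rigorously proving that every relevant automorphism of $G$ preserves the $\pm s$-edge class. The automorphism $\psi$ does so by construction, but arbitrary automorphisms of $G$ need not. The argument must combine the $\psi$-rigidity with the abelian Cayley structure and the hypothesis $\kappa(G) > 1$ to pin down the $\pm s$-edges as a graph-theoretically canonical edge class (e.g., via the unique partition into $l$ disjoint $k$-cycles that is stabilized by the automorphism group generated by $\psi$ and the translations). The odd-order hypothesis and the lower bounds $k, l \geq 5$ will be essential to rule out small exceptional configurations where this distinguishability could fail.
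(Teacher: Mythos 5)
Your proposal has a genuine gap, and you correctly locate it yourself: step two. Nothing in the hypotheses forces the class of $\pm s$-edges (equivalently, the partition of $V(G)$ into $l$ disjoint $k$-cycles) to be preserved by an arbitrary automorphism mapping your snake cycle to your zigzag cycle. In Cayley graphs of abelian groups the edge classes coming from different generators are in general \emph{not} automorphism-invariant (already in $K_4=\Cay(\Z_2^2,\{(1,0),(0,1),(1,1)\})$ all three classes are fused, and in $C_k \bx C_k$ the two cycle-partitions are swapped by an automorphism), so this invariance would have to be extracted from the contradiction hypothesis $G\in\mH$ together with $K\cong C_k$ -- which is essentially as hard as the lemma itself. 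The automorphism $\psi$ you build in step one preserves the class by construction, but the automorphism you need in step three is the one mapping $C_1$ to $C_2$, about which you know nothing beyond its existence. Without closing this, the edge count in step three proves nothing. (Step three also has an unverified counting claim, but that is minor by comparison.)

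The paper's proof avoids the issue entirely by working with a single zigzag cycle $C$ and its rotation. Since $G\in\mH$ forces every Hamiltonian cycle to be $\kappa(G)$-symmetric (\cref{lemma:samekappa}), rotating $C$ by $kl/\kappa(G)$ is an automorphism of $G$ mapping $C$ to itself. The invariant used is purely local: whether two vertices at distance $3$ along $C$ are adjacent in $G$. The segment $v_{1,1},\dots,v_{1,k-1}$ lies inside one layer, and because $K\cong C_k$ is chordless, no two of its vertices at distance $3$ on $C$ are adjacent; after arranging (by modifying the zigzag parameters, and a separate mod-$8$ analysis for $k=5$) that the rotation sends this segment across a layer boundary, the image contains a zigzag turn, which does produce adjacent vertices at distance $3$ on $C$ -- a contradiction. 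No claim about which automorphisms preserve which edge class is ever needed. If you want to rescue your outline, you should replace the global edge-class argument by a rotation-invariant local adjacency pattern of this kind.
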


\begin{proof}
Assume otherwise.
We use a suitable zigzag-cycle of $G$ in the group-induced layer structure to prove this statement.
As these are only defined for odd~$l$, we first naturally extend the definition for even $l$ as follows:  Take a zigzag cycle for~$l-1$ (see \cref{fig:zigzag}) and ``replace'' the edge between $\{v_{0,0},v_{0,k-1}\}$ by visiting all edges of an additional first layer.

It is easy to see that the parameters of a zigzag-cycle can be chosen such that the vertices of the last layer $v_{l-1,0}$ and $v_{l-1,k-1}$ are mapped to different layers when rotating by $\alpha=n/\kappa(G)$: In fact, they can only be mapped to the same layer if the entire last layer is mapped to the first layer. Modifying any of the parameters of the zigzag-cycle by one prevents this.

If $v_{l-1,0}$ and $v_{l-1,k-1}$ are mapped to different layers, there exist vertices in the last layer at distance 3 that are mapped to adjacent vertices.
Since $G$ is Hamiltonian-transitive, this contradicts the last layer being a cycle.
\end{proof}

The last step before concluding the main theorem is to settle the cases where $|K|$ or $l$ is small.

\begin{restatable}{lemma}{lemcayleythreelayers}\label{lemma:cayleythreelayers}
Let $\Gamma$ be an abelian group of odd order with generating set $S$. 
Suppose that the Cayley graph $G = \Cay(\Gamma,S)$ is Hamiltonian-transitive and  $\kappa(G) > 1$.
Assume that, for all~$s\in S$, $\operatorname{ord}(s)<5$ or $|\Gamma : \langle s\rangle|<5$.
Then~$\Gamma=\Z_3^d$ for $d\geq4$ or $G$ is complete or a~cycle.
\end{restatable}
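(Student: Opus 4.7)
The plan is to proceed by case analysis on the structure of the generating set $S$. Since $|\Gamma|$ is odd, the hypothesis simplifies to: every $s\in S$ satisfies $\operatorname{ord}(s)=3$, $\langle s\rangle=\Gamma$, or $|\Gamma:\langle s\rangle|=3$. If some $s\in S$ generates $\Gamma$, then \cref{thm:cyclic_generator} applies and yields $G\in\{K_n, C_n\}$ directly, since the $K_{m,m}$ case is excluded by oddness of $n$.

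Otherwise, if every $s\in S$ has order $3$, then $\Gamma$ has exponent $3$, so $\Gamma\cong\Z_3^t$. For $t\geq 4$ the conclusion holds, and for $t\leq 3$ I invoke \cref{lemma:small-groups} (covering all groups of order at most $27$, including $\Z_3\times\Z_3\times\Z_3$), which forces $G\cong K_n$. Otherwise there exists $s_0\in S$ with $|\Gamma:\langle s_0\rangle|=3$, so $3\mid n$ and $\operatorname{ord}(s_0)=n/3$; the case $n\leq 16$ is covered by \cref{lemma:small-groups}, and the case $\Gamma\cong\Z_{3^k}$ is excluded since all non-generators of a cyclic $p$-group lie in a proper subgroup, which would force $\langle S\rangle\neq\Gamma$ and hence push us back into the first paragraph.

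If $9\nmid n$, then every $s\in S$ has order either $3$ (a power of $3$) or $n/3$ (coprime to $3$), so by \cref{lemma:compositeorder}, $\Gamma=\langle S_3\rangle\times\langle S'_3\rangle$ with $|\langle S_3\rangle|=3$ and $|\langle S'_3\rangle|=n/3\geq 5$. \cref{lemma:splittingcayley} then realizes $G$ as the Cartesian product of two Hamiltonian Cayley graphs of coprime orders, which must be relatively prime as graphs (any common prime factor would have vertex count dividing both orders), so \cref{lem:prime-not-in-H} yields the contradiction $G\notin\mH$.

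The main obstacle will be the remaining case $9\mid n$, where \cref{lemma:compositeorder} no longer applies cleanly because elements of order $n/3$ mix $3$- and non-$3$-prime parts. Here the plan is to use \cref{lemma:divisibilitylemma}(1), applied to $s_0$ (whose order $n/3>3$ is a proper multiple of $3$ and of every prime divisor of $n/3$), to force every prime divisor of $n$ to divide $\kappa(G)$. Combined with the structure theorem for finite abelian $3$-groups, this reduces to $\Gamma\cong\Z_{3^{k-1}}\times\Z_3$ in the pure $3$-group subcase, and to groups with a nontrivial non-$3$ Sylow part otherwise. For each such subcase I would either fall back on \cref{lemma:small-groups}, or extract a subset $S'\subseteq S$ generating a subgroup $\Delta$ with $|\Delta|,|\Gamma:\Delta|\geq 5$ so that condition~(2) of \cref{thm:layerstructure} applies and forces the associated $K$ to be $K_k$ or $C_k$; a direct comparison with the restrictions on the orders of elements of $S\setminus\langle s_0\rangle$ then yields the desired contradiction.
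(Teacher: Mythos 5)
Your first two paragraphs and the subcase $9\nmid n$ are sound, and there your route (splitting off the Sylow $3$-part via \cref{lemma:compositeorder} and \cref{lemma:splittingcayley}, then invoking \cref{lem:prime-not-in-H}) matches the paper's reduction to a product with $K_3$. The genuine gap is exactly the case you flag as ``the main obstacle'', namely $9 \mid n$ with some $s_0 \in S$ of index $3$: your plan there is to extract an inverse-closed $S'\subseteq S$ generating a subgroup $\Delta$ with $|\Delta|, |\Gamma:\Delta|\geq 5$ and apply condition~(2) of \cref{thm:layerstructure}. Such a subset need not exist. Concretely, take $\Gamma=\Z_{3^k}\times\Z_3$ with $k\geq 3$ and $S=\{\pm(1,0),\pm(1,1)\}$: both generators have order $3^k$ and index $3$, so the hypotheses of the lemma are satisfied, $n=3^{k+1}\geq 81$ is out of range of \cref{lemma:small-groups}, every nonempty proper inverse-closed subset of $S$ generates an index-$3$ subgroup while $\langle S\rangle=\Gamma$, and \cref{lemma:compositeorder} yields only the trivial decomposition since all orders are powers of $3$. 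Every tool you list is therefore inapplicable, and the desired contradiction with $G\in\mH$ is not obtained.

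The paper closes precisely this case by working directly with the $l=3$ group-induced layer structure for $\Delta=\langle s_0\rangle$ --- deliberately outside the scope of \cref{thm:layerstructure}, which needs $l\geq 5$ --- and arguing by hand. It first rules out the presence of an order-$3$ element $t$ outside $\Delta$ (that case gives $G=K\bx K_3$, excluded by \cref{thm:cart-prod}), deduces $3\mid k=\operatorname{ord}(s_0)$, and then compares two explicit Hamiltonian cycles (\cref{fig:graphenl=3_kreis}), using that $\kappa(G)$ is even\ldots{} rather, divisible by $3$, so that a rotation by $k$ is an automorphism; matching pairs of adjacent vertices at distance $3$ on the two cycles forces $3s_0\in S$. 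Since $3s_0$ has order $k/3>3$ and index $9\geq 5$ whenever $k>9$, this contradicts the hypothesis on $S$, with $k=9$ (i.e.\ $|\Gamma|=27$) absorbed into \cref{lemma:small-groups}. Some explicit construction of this kind is unavoidable for your remaining case; without it the proof is incomplete.
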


\begin{proof}
If there is an element $s \in S$ with $\langle s \rangle=\Gamma$, $G$ is complete or a cycle by $\cref{thm:cyclic_generator}$. 
Since $\Gamma$ is of odd order, we can thus assume for every $s \in S$ that $\operatorname{ord}(s)=3$ or $|\Gamma : \langle s \rangle|=3$.

If we have $\operatorname{ord}(s)=3$ for every $s\in S$, then $\Gamma \cong \Z_3^d$ for some $d \geq 1$ (by the fundamental theorem of finitely generated abelian groups). By~\cref{lemma:small-groups}, $d\leq 3$ is only possible if $G$ is complete, so the statement holds.
Therefore, we can assume there exists some $s\in S$ with  $\operatorname{ord}(s)\geq 5$ and $|\Gamma : \langle s \rangle|=3$.

Consider the group-induced $K$-$l$-layer structure described in \cref{obs:group-induced-layer} for~$\Delta\coloneqq\langle s \rangle$ and let $k\coloneqq|K|$. Let $t \in S \setminus \Delta$. 
First, we argue that $\operatorname{ord}(t)\geq 5$ (and therefore $|\Gamma : \langle t \rangle|=3)$.
Assume otherwise, i.e., assume $|\langle t \rangle| = 3$.
As $|\Gamma : \Delta| = 3$, $t + \Delta$ generates~$\Gamma/\Delta$ and hence~$\Gamma = \Delta + \langle t\rangle$.
Since $|\langle t \rangle| = 3$, we even have $\Gamma = \Delta \times \langle t\rangle$ and hence $G = K \bx K_3$. 
Decompose $K = Q_1^{\bx r_1} \bx \dots \bx Q_r^{\bx r_n}$ into its prime factors. It is easy to see that $Q_1^{r_1}, \dots, Q_r^{r_n}$ are Cayley graphs of subgroups of $\Gamma$.
In particular, they have odd order at least 3 and are Hamiltonian. 
By Theorem~\ref{thm:cart-prod} and Theorem~\ref{thm:cartesian-product-K2}, we obtain a contradiction. 
Thus, we obtain $|\langle t \rangle| \geq 5$ and $|\Gamma : \langle t \rangle| = 3$.

In particular, we have $3 = |\Gamma : \langle t \rangle| = |(\Delta + \langle t \rangle) : \langle t \rangle| = |\Delta : \Delta \cap \langle t \rangle|$ by the isomorphism theorem. 
Hence $k = \operatorname{ord}(s)=|\Delta|$ is a multiple of $3$.

Consider the Hamiltonian cycle $C$ depicted in Figure~\ref{fig:graphenl=3_kreis}.
As in the proof of Lemma~\ref{lemma:layercayley}, we show that $\kappa(G)$ is divisible by $3$, and hence rotating $C$ by $k$ is an automorphism of~$G$. 
It maps $\{v_2, u_2\}$ to $\{w_0, v_{k-1}\}$, and hence $w_0$ and $v_{k-1}$ are adjacent in $G$.
Hence there is a cycle $C' = (w_0, u_0, v_0, v_1, u_1, w_1, w_2, u_2, v_2, v_3, \dots, v_{k-1})$.
Note that the vertices~$\{u_0, u_1\}, \{u_1, u_2\}, \{u_2, u_3\}, \dots, \{u_{k-2}, u_{k-1}\}, \{u_{k-1}, u_0\}$ are pairs of adjacent vertices of distance~$3$ on~$C'$. 
Observe that mapping $C'$ to $C$, one of these pairs will be mapped to the same layer: Every third vertex in $C'$ is in such a pair, and the layers have size at least 5. If such a pair is not contained in the last layer, we have $k=5$ and one of the $u_i$ is mapped to $w_2$. But then, one can easily see that such a pair exists in the second layer.

Having two adjacent vertices at distance 3 in the same layer implies that $3s \in S$. 
As $3$ divides $k=\operatorname{ord}(s)$, we have $\operatorname{ord}(3s) = k/3$ and we have  $|\Gamma : \langle 3s\rangle|= 3\cdot  |\Gamma : \langle s\rangle|=9$.
If $k > 9$, we also have $\operatorname{ord}(3s) > 3$. 
This is a contradiction to the assumption of the lemma as we have $3s\in S$.		
The only case left is $k = 9$ (so $|\Gamma| = 27$), which can be checked directly (see Lemma~\ref{lemma:small-groups}).
\end{proof}

\begin{figure}
	\centering
	\begin{tikzpicture}[scale=1, every node/.style={circle, draw, inner sep=0pt, minimum size=3.5mm}]
		\foreach \x in {0,1,2,3,4,5,6,7} {
			\node (v\x) at (\x,1) {\footnotesize{$v_{\x}$}};
			\node (u\x) at (\x,0) {\footnotesize{$u_{\x}$}};
			\node (w\x) at (\x,-1) {\footnotesize{$w_{\x}$}};
		}
		\foreach \x in {0,1,2,3,4,5,6,7} {
			\draw[gray] (u\x) to (w\x);
		}
		\foreach \x in {0,1,2,3,5,6,7} {
			\draw[gray] (v\x) to (u\x);
		}
		%Hamiltonian cycle
		\draw[blue, very thick] (v0) to (u0) to (w0) to (w1) to (w2) to (w3) to (w4) to (w5) to (w6) to (w7) to (u7) to (u6) to (u5) to (u4) to (u3) to (u2) to (u1) to (v1) to (v2) to (v3) to (v4) to (v5) to (v6) to (v7) to [bend right = 20] (v0);
		\draw[magenta, very thick] (v4) to (u4);
		\draw[magenta, very thick, dashed] (u0) to (w6);
	\end{tikzpicture}
	\caption{The Hamiltonian cycle in the proof of \cref{lemma:l=3}, depicted for $k = 8$.}
	\label{fig:graphenl=3_kreis}
\end{figure}

Now, we have all the tools at hand to prove \cref{thm:odd-cayley-graphs}.

\begin{proof}[Proof of \cref{thm:odd-cayley-graphs}]
First, assume that $\kappa(G) = 1$. 
Then by~\cite[Theorem~6.4]{GRE24}, this implies~\hbox{$S = \{s_{p_1}, \dots, s_{p_r}\}$} for pairwise distinct prime numbers $p_1, \dots, p_r$ such that the element~$s_{p_i}$ has order $p_i$ for~$i = 1, \dots, r$. 
By Lemmas~\ref{lemma:splittingcayley} and~\ref{lemma:compositeorder}, we obtain the Cartesian product decomposition $G = \Cay(\langle s_{p_1} \rangle, \{s_{p_1}\}) \bx \dots \bx \Cay(\langle s_{p_r} \rangle, \{s_{p_r}\})$. 
In this decomposition, the factors are relatively prime because their orders are pairwise coprime.
Theorem~\ref{thm:cart-prod} then yields $r = 1$, and thus $G = C_{p_1}$ is a cycle.

From now on, assume $\kappa(G) > 1$. 
By \cref{lemma:cayleythreelayers}, we obtain that (at least) one of the following holds: (1) $\Gamma=\Z_3^t$ for $t\geq4$; or (2) $G$ is complete; or (3) a cycle; or (4) there exists~$s \in S$ of order at least $5$ and $|\Gamma : \langle s \rangle |\geq 5$.
In the second and third cases, there is nothing to do.
In case (1), we can choose~$s_1,s_2 \in S$ such that~$\Delta:=\langle S' \rangle := \langle s_1,s_2 \rangle$ has order~$9$ and $|\Gamma:\Delta|\geq 9$.
Similarly, in case (4), we can set~$S':=\{s\}$, so that $\Delta=\langle S' \rangle$ has order at least 5 and index at least 5. 
Consider the $K$-$l$-layer structure given by $S'$ as described in \cref{obs:group-induced-layer}. 
By Lemma~\ref{lemma:layercayley},~$K = \Cay(\Delta, S')$ is a cycle or a complete graph. The first case is excluded by Lemma~\ref{lemma:cayleycycle}. In the second, $G$ is a complete graph by Lemma~\ref{lemma:cayleycomplete}. 
\end{proof}

\subsection{Cayley graphs of even order}
\label{subsec:cayleyeven}

In the previous subsection, we studied Cayley graphs of odd order.
For the even case, we require somewhat different techniques and present them in this subsection.
Our main result for Cayley graphs of even order is the following, where we rely on a mild non-redundancy condition on the generating set.
However, we conjecture that the statement can be generalized to all Cayley graphs of abelian groups (with the additional exceptions of $K_n$ and $K_{n/2,n/2}$).

\thmcayleyeven*

We first prove Theorem~\ref{thm:cayley_even} for the case $l:=|\langle S \rangle /\langle S \setminus \{\pm s\}\rangle|\geq 4$.
Subsequently, the cases~$l\in \{2,3\}$ will be studied separately.

\begin{restatable}{lemma}{lemmacayleyevengeneral}\label{thm:main_cayley}
	Let $\Gamma$ be an abelian group of even order with generating set $S$ and~${G = \Cay(\Gamma,S)}$. Suppose there exists an $s \in S$ such that \(l = |\Gamma:\langle S \setminus \{\pm s\} \rangle| \geq 4.\) Then $G \in \mH$ if and only if~$G = C_n$ or $G = C_l \bx K_2$, where $l$ is odd or $l=4$.
\end{restatable}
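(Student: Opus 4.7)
The forward implication is immediate: $C_n \in \mH$ trivially, and $C_l \bx K_2 \in \mH$ for $l$ odd or $l = 4$ by \cref{thm:cartesian-product-K2}. For the converse, assume $G \in \mH$, and set $S' := S \setminus \{\pm s\}$, $\Delta := \langle S' \rangle$, and $K := \Cay(\Delta, S')$. By the construction at the start of Section~\ref{sec:group-induced-layer-structure}, $G$ carries a group-induced $K$-$l$-layer structure whose $l$ layers are the cosets of $\Delta$ in $\Gamma$, joined cyclically by the perfect matchings of $\pm s$-edges. The plan is to first prove $|K| \leq 2$, and then analyze the two resulting sub-cases.

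To show $|K| \leq 2$, we argue by contradiction. Suppose $k := |K| \geq 3$, so $K$ is itself a Hamiltonian Cayley graph of an abelian group. Fix a Hamiltonian cycle $C_K$ of $K$. Following the strategy of \cref{lem:prime-not-in-H}, we construct two Hamiltonian cycles of $G$: a \emph{layer cycle} $C^{\mathrm{lay}}$ that traverses a copy of $C_K$ inside each of the $l$ cosets in turn (using only $l$ edges from the set $F$ of $\pm s$-edges), and a \emph{stripe cycle} $C^{\mathrm{str}}$ that first walks through all $l$ cosets along a fixed $K$-coordinate and then fills in each coset one $K$-edge at a time (using on the order of $k(l-1)$ edges of $F$). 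The key claim is that $F$ can be characterised intrinsically as an edge cut whose removal disconnects $G$ into $l$ components each isomorphic to $K$, and that (using $l \geq 4$ together with the non-redundancy hypothesis) this cut is graph-theoretically unique, so that every automorphism of $G$ preserves $F$ as a set. Consequently $|E(C) \cap F|$ is an automorphism invariant, and since it differs between our two cycles whenever $k, l \geq 3$, we contradict $G \in \mH$.

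Once $|K| \leq 2$ is established, the case $|K| = 1$ forces $S = \{\pm s\}$ and hence $G = C_n$. In the case $|K| = 2$, we have $\Delta = \{0, t\}$ for some involution $t$, and $\Gamma$ is either $\Delta \times \langle s \rangle \cong \Z_2 \times \Z_l$ or cyclic of order $2l$ with $t = ls$. The former yields $G \cong C_l \bx K_2$ via \cref{lemma:splittingcayley}, and then $l$ must be odd or $l = 4$ by \cref{thm:cartesian-product-K2}. The latter yields the Möbius ladder $M_l := \Cay(\Z_{2l}, \{\pm 1, l\})$; for $l \geq 5$ we would rule out $M_l \in \mH$ by an invariant edge-cut argument (the rung matching is distinguished as the unique perfect matching whose removal leaves a single $2l$-cycle) combined with two explicit Hamiltonian cycles using different numbers of rungs, while the case $l = 4$ reduces to $M_4$ on eight vertices and is handled by the exhaustive check in \cref{lemma:small-groups}.

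The principal technical obstacle is justifying that the $\pm s$-edge cut $F$ (and likewise the rung matching in the Möbius ladder case) is preserved by every graph automorphism of $G$. Unlike the Cartesian-product setting of \cref{lem:prime-not-in-H}, the automorphism group of a Cayley graph may strictly exceed the translation subgroup and could in principle permute generators of $S$. The non-redundancy hypothesis that $S \setminus \{\pm s\}$ is non-generating of index $l \geq 4$ is precisely what makes the cut intrinsically detectable, and giving a clean intrinsic description of $F$ that works uniformly for all such $G$ is the most delicate point of the argument.
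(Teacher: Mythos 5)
Your overall skeleton (reduce to the group-induced $K$-$l$-layer structure, dispose of small $|K|$, and derive a contradiction for larger $|K|$ from two Hamiltonian cycles meeting the $\pm s$-edge set $F$ in different cardinalities) parallels the paper's, and your treatment of $|K|\le 2$ is essentially recoverable — though the paper handles the cyclic sub-case, including your Möbius ladder, at the outset via \cref{thm:cyclic_generator}, since $t=ls$ forces $\Gamma=\langle s\rangle$; no ad hoc rung-matching argument is needed, and in fact that matching is \emph{not} the unique perfect matching whose removal leaves a Hamiltonian cycle (check $M_4$ or $M_5$). The genuine gap is the step you yourself flag as delicate: the claim that $F$ is the \emph{unique} edge cut whose removal leaves $l$ components isomorphic to $K$, hence automorphism-invariant. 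This is false under the stated hypotheses. Take $\Gamma=\Z_4\times\Z_4$ with $S=\{\pm(1,0),\pm(0,1)\}$ and $s=(0,1)$: then $l=4$, $|\Gamma|$ is even, $K\cong C_4$, and both coordinate edge-classes disconnect $G=C_4\bx C_4$ into four copies of $C_4$; the coordinate-swap automorphism interchanges them. Worse, in exactly such symmetric situations your ``layer cycle'' and ``stripe cycle'' can lie in the same automorphism orbit, so the count $|E(C)\cap F|$ separates nothing — this is the same phenomenon that forces the paper to treat $G^{\bx n}$ separately from relatively prime products in \cref{thm:cart-prod}.

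The paper's proof never asserts invariance of $F$. For $k\ge 4$ it fixes two explicit cycles $C$ and $\hat C$ and uses a purely metric invariant of a vertex relative to a Hamiltonian cycle, namely whether it has a $G$-neighbor at cycle-distance $2k-3$; it then tracks four specific such vertices $a,b,c,d$ on $C$ whose mutual cycle-distances and adjacencies force their images into $V_2\cup V_{l-1}$ in a pattern that is impossible when $l\ge 4$. The case $k=3$ is not absorbed into this argument but reduced to $K_3\bx C_l$ and eliminated by \cref{thm:cart-prod}. To salvage your route you would need either to argue about the whole automorphism orbit of such cuts rather than a single distinguished one, or to switch to an invariant that presupposes no distinguished cut — which is precisely what the paper's coloring argument does.
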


\begin{proof}
	The graphs $C_n$ and $C_l \bx K_2$ for $l$ odd are in $\mH$ by Theorems~\ref{thm:cartesian-product-K2} and~\ref{thm:cyclic_generator}. 
	Now let $G \in \mH$. 
	Let $\Delta = \langle S \setminus \{ \pm s\} \rangle$ and set $k = |\Delta|$, so $|\Gamma| = |G|= kl$. 
	If $\Gamma = \langle \pm s \rangle  = \langle s \rangle$, Theorem~\ref{thm:cyclic_generator} implies $G \cong C_n$, since $G \cong K_n$ and $G \cong K_{m,m}$ (for~$m\geq 3$) both contradict the assumption that~$l\geq4$.
	Assume now $\Gamma \neq \langle s \rangle$. Then,~$k\geq 2$.
	For $k = 2$ we have $\Delta = \{0, x\} \cong \Z_2$ with $2x = 0$. 
	Thus~${S = \{ \pm s, x\}}$.
	Note that since $x \notin \langle s \rangle$ by assumption, we have $\Delta \cap \langle s \rangle  = \{0\}$ and hence $\Gamma = \Delta \times \langle s \rangle \times$. 
	Thus~$G \cong \Cay(\Delta, \{x\}) \bx \Cay(\langle s\rangle, \{\pm s\}) \cong K_2 \bx C_l$ by Lemma~\ref{lemma:splittingcayley}.
	By Theorem~\ref{thm:cartesian-product-K2},~$l$ is odd or $l=4$.
	For~$k = 3$, we argue analogously to obtain $G \cong K_3 \bx C_l$. 
	Then Theorem~\ref{thm:cart-prod} yields $l = 3$, which contradicts our assumption.
	
	Now let \(k \geq 4\) and~${K\coloneqq \Cay(\Delta,S \setminus \{\pm s\})}$.
	We show that \(G \in \mH\) leads to a contradiction. 
	We use the group-induced $K$-$l$-layer structure of $G$ defined in the beginning of \cref{sec:group-induced-layer-structure}. Since \( \langle S \setminus \{\pm s\} \rangle \subsetneq \Gamma\),
	only edges corresponding to $s$ and $-s$ connect different layers. Let~\(C\) and \(\hat{C}\) be the cycles given in \Cref{fig:construction-nonredundant}. 
	For $v,w \in V(G)$, we define $d_C(v,w)$ to be the distance of $v$ and $w$ on $C$ (i.e., the minimum number of edges on $C$ between $v$ and $w$). 
	Similarly, we define $d_{\hat{C}}(v,w)$. 
	Observe that for all $v,w \in V_2$, we have $d_{\hat{C}}(v,w) \neq k-2$, and the same holds for $V_{l-1}$, independently of the parity of $l$. 
	In the following, we color $v \in V(G)$ green (blue) if there exists $w \in N_G(v)$ with $d_C(v,w) = 2k-3$ (with $d_{\hat{C}}(v,w) = 2k-3$). 
	Only vertices in $V_2 \cup V_{l-1}$ can be colored blue.
	Clearly, an automorphism of $G$ that maps $C$ to $\hat{C}$ maps green vertices to blue vertices. 
	
	Let $a = v_{2, k-1}$, $b = v_{2,1}$, $c = v_{3,1}$ and $d = v_{3,k-1}$.
	Then $a$, $b$, $c$ and $d$ are green vertices on $C$. 
	Note that $d_C(a,b) = k-2$, $d_C(b,c) = 1$ and $d_C(c,d) = k-2$ as~$|G| = kl \geq 4k$. 
	Moreover, $a$ and $d$ are adjacent in $G$. 
	Suppose that $\varphi \in \Aut(G)$ maps~$C$ to~$\hat{C}$. 
	Let $P$ denote the path $(a,\dots, b,c, \dots, d)$ of length $2k-3$ on $C$. 
	If~$\varphi(b) \in V_2$, then~$\varphi(a) \in V_{l-1}$ as $d_{\hat{C}}(\varphi(a), \varphi(b)) = d_C(a,b) = k-2$ and no vertices in~$V_2$ have distance~$k-2$ on $\hat{C}$.	
	First assume that $\varphi(c) \in V_{l-1}$. 
	Since $d_{\hat{C}}(\varphi(b), \varphi(c)) = 1$, this implies $l = 4$ and $\varphi(b) \in \{v_{2,1}, v_{2,k}\}$. 
	But since $\varphi(a) \in V_{l-1}$ and $\varphi(a)$ and $\varphi(c)$ are on opposite sides of $\varphi(b)$ on $\varphi(P)$, we obtain~$d_{\hat{C}}(\varphi(a), \varphi(b)) > k-2$, which is a contradiction. 
	Hence $\varphi(c) \in V_2$. 	
	Then $\varphi(d) \in V_{l-1}$ as it is adjacent to $\varphi(a) \in V_{l-1}$. 
	Thus all vertices of $V(C) \setminus V(P)$ are mapped to $V_{l-1} \cup V_l$. 
	Since $\varphi(a), \varphi(d) \in V_{l-1} \cup V_l$, we have $|V(C) \setminus V(P)| \leq |V_{l-1} \cup V_l| -2 = 2k-2$.
	Then \[kl = |C| \leq |P| + |V(C) \setminus V(P) | \leq (2k-3) + (2k-2)  = 4k-5.\] 
	This implies $l \leq 3$, which is a contradiction. 
	Hence we have $\varphi(b) \in V_{l-1}$. 
	As before, we obtain~$\varphi(a) \in V_2$, $\varphi(c) \in V_{l-1}$ and $\varphi(d) \in V_2$. Then $\varphi$ maps $V(C) \setminus V(P)$ to $V_1 \cup V_2$, which leads to a contradiction as before. 
\end{proof}

\begin{figure}
	\centering
	\begin{tikzpicture}[scale=0.7]
		
		%s-edges
		\foreach \x in {0,1,2,3,4,5} {
			\foreach \y in {0,1,2,3} {
				\draw[gray] (\x,\y) to (\x,\y+1);
			}
		}
		
		% paths of c^i
		\foreach \x in {0,1,2,3,4} {
			\foreach \y in {0,1,2,3,4} {
				\draw[gray] (\x,\y) to (\x+1,\y);
			}
		}

		% close cycle c^i
		\foreach \y in {0,1,2,3,4} {
			\draw[gray, dashed,  bend left=40, looseness=0.5] (0,\y) to (4.9,\y);
		}
		
		% labels C^i
		\node at (-0.5,0) {\(V_{l}\)};
		
		\node at (-0.5,3) {\(V_2\)};
		\node at (-0.5,4) {\(V_1\)};
		
		% Add the curly bracket on the top
		\draw[decorate,decoration={brace,amplitude=10pt}] 
		(0,4.5) -- (5,4.5) node[midway,yshift=0.7cm] {\(k\)};
		
		%draw the edges of the Hamiltonian cycle
		\foreach \x in {0,1,2,3} {
			\foreach \y in {0,1,2,3,4} {
				\draw[blue, very thick] (\x,\y) to (\x+1,\y);
			}
		}
		
		\draw[blue, very thick,  bend left=40, looseness=0.5] (0,4) to (4.9,4);
		\draw[blue, very thick] (0,0) to (0,1);
		\draw[blue, very thick] (4,1) to (4,2);
		\draw[blue, very thick] (0,2) to (0,3);
		\draw[blue, very thick] (4,3) to (4,4);
		
		\draw[blue, very thick] (4,0) to (5,0);
		\foreach \y in {0,1,2,3} {
			\draw[blue, very thick] (5,\y) to (5,\y+1);
		}
		
		%vertices
		\foreach \x in {0,1,2,3,4,5} {
			\foreach \y in {0,1,2,3,4} {
				\node [style=vertex] (\x,\y) at (\x,\y) {};
			}
		}
		
		%pairs of vertices with distance 2k-1 on the cycle and distance 1 in the graph
		\node [circle,draw, thick,fill=blue!30!white, inner sep=0.75pt] (d) at (4,2) {\footnotesize \(d\)};
		\node [circle,draw, thick,fill=blue!30!white, inner sep=1.25pt] (a) at (4,3) {\footnotesize \(a\)};
		\node [circle,draw, thick,fill=blue!30!white, inner sep=0.75pt] (b) at (0,3) {\footnotesize \(b\)};
		\node [circle,draw, thick,fill=blue!30!white, inner sep=1.35pt] (c) at (0,2) {\footnotesize \(c\)};
	\end{tikzpicture}
	\hspace{2mm}
	\begin{tikzpicture}[scale=0.7]	
		%s-edges
		\foreach \x in {0,1,2,3,4,5} {
			\foreach \y in {0,1,2,3} {
				\draw[gray] (\x,\y) to (\x,\y+1);
			}
		}
		
		% paths of c^i
		\foreach \x in {0,1,2,3,4} {
			\foreach \y in {0,1,2,3,4} {
				\draw[gray] (\x,\y) to (\x+1,\y);
			}
		}
		
		% close cycle c^icircle
		\foreach \y in {0,1,2,3,4} {
			\draw[gray, dashed,  bend left=40, looseness=0.5] (0,\y) to (5,\y);
		}
		
		% labels C^i
		\node at (-0.5,0) {\(V_{l}\)};
		
		\node at (-0.5,3) {\(V_2\)};
		\node at (-0.5,4) {\(V_1\)};
		
		% Add the curly bracket on the top
		\draw[decorate,decoration={brace,amplitude=10pt}] 
		(0,4.5) -- (5,4.5) node[midway,yshift=0.7cm] {\(k\)};
		
		%draw the edges of the Hamiltonian cycle
		\foreach \x in {0,1,3,4} {
			\foreach \y in {0,1,2,3,4} {
				\draw[blue, very thick] (\x,\y) to (\x+1,\y);
			}
		}
		
		\draw[blue, very thick,  bend left=40, looseness=0.5] (0,4) to (5,4);
		\draw[blue, very thick] (0,0) to (0,1);
		\draw[blue, very thick] (5,0) to (5,1);
		\draw[blue, very thick] (2,1) to (2,2);
		\draw[blue, very thick] (3,1) to (3,2);
		\draw[blue, very thick] (0,2) to (0,3);
		\draw[blue, very thick] (5,2) to (5,3);
		\draw[blue, very thick] (2,3) to (2,4);
		\draw[blue, very thick] (3,3) to (3,4);
		
		\draw[blue, very thick] (2,0) to (3,0);
		
		%vertices
		\foreach \x in {0,1,2,3,4,5} {
			\foreach \y in {0,1,2,3,4} {
				\node [style=vertex] (\x,\y) at (\x,\y) {};
			}
		}
	\end{tikzpicture}
	\hspace{2mm}
	\begin{tikzpicture}[scale=0.7]	
		\useasboundingbox (-0.8,-0.85) rectangle (4.2,5);
		%s-edges
		\foreach \x in {0,1,2,3,4} {
			\foreach \y in {0,1,2} {
				\draw[gray] (\x,\y) to (\x,\y+1);
			}
		}
		
		% paths of c^i
		\foreach \x in {0,1,2,3} {
			\foreach \y in {0,1,2,3} {
				\draw[gray] (\x,\y) to (\x+1,\y);
			}
		}
		
		% close cycle c^icircle
		\foreach \y in {0,1,2} {
			\draw[gray, dashed,  bend left=40, looseness=0.5] (0,\y) to (4,\y);
		}
		
		% labels C^i
		\node at (-0.5,0) {\(V_{l}\)};
		\node at (-0.5,2) {\(V_2\)};
		\node at (-0.5,3) {\(V_1\)};
		
		% Add the curly bracket on the top
		\draw[decorate,decoration={brace,amplitude=10pt}]
		(0,3.5) -- (4,3.5) node[midway,yshift=0.7cm] {\(k\)};
		
		%draw the edges of the Hamiltonian cycle
		\foreach \x in {0,1,3} {
			\foreach \y in {0,1,2,3} {
				\draw[blue, very thick] (\x,\y) to (\x+1,\y);
			}
		}
		\draw[blue, very thick,  bend left=40, looseness=0.5] (0,3) to (4,3);
		\draw[blue, very thick,  bend left=40, looseness=0.5] (0,0) to (4,0);
		\draw[blue, very thick] (2,0) to (2,1);
		\draw[blue, very thick] (3,0) to (3,1);
		\draw[blue, very thick] (0,1) to (0,2);
		\draw[blue, very thick] (4,1) to (4,2);
		\draw[blue, very thick] (2,2) to (2,3);
		\draw[blue, very thick] (3,2) to (3,3);
		
		%vertices
		\foreach \x in {0,1,2,3,4} {
			\foreach \y in {0,1,2,3} {
				\node [style=vertex] (\x,\y) at (\x,\y) {};
			}
		}
	\end{tikzpicture}
	\caption{Illustrations for the Hamiltonian cycles in the proof of~\cref{thm:main_cayley}. The left subfigure depicts the cycle $C$, the middle one $\hat{C}$ in case $k$ even, and the right one $\hat{C}$ for $k$ odd. The vertical edges in the middle of \(\hat{C}\) are at columns \(\lceil k/2 \rceil\) and \(\lceil k/2 \rceil +1\).}
	\label{fig:construction-nonredundant}
\end{figure} 

In the next two lemmas, we settle the cases where $l\in\{2,3\}$.

\begin{restatable}{lemma}{lemmacayleyevenltwo}\label{lemma:l=2}
	Let $\Gamma$ be an abelian group with generating set $S$ and $G = \Cay(\Gamma,S)$. Suppose that there exists an $s \in S$ such that \(l = |\Gamma:\langle S \setminus \{\pm s\} \rangle| =2.\) Then $G \in \mH$ if and only if~$G \in \{C_4, K_{4,4}\}$ or $G = C_k \bx K_2$, where $k$ is odd or $k=4$.
\end{restatable}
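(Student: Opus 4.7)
The backward direction is immediate: $C_k \bx K_2 \in \mH$ when $k$ is odd or $k = 4$ by Theorem~\ref{thm:cartesian-product-K2}, and $K_{4,4} \in \mH$ follows from Theorem~\ref{thm:cyclic_generator} via the realisation $K_{4,4} \cong \Cay(\Z_8, \{1,3,5,7\})$, which lies in the $K_{m,m}$ exception.

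For the forward direction, assume $G \in \mH$ and fix $s \in S$ with $l = 2$. Set $\Delta := \langle S \setminus \{\pm s\}\rangle$, $K := \Cay(\Delta, S \setminus \{\pm s\})$, and $k := |\Delta|$, so $|G| = 2k$ and $G$ carries a natural $K$-$2$-layer structure with layers $V_1 = \Delta$ and $V_2 = s + \Delta$, each inducing a copy of $K$, connected by the edges labelled $\pm s$. The argument splits on $\operatorname{ord}(s)$.

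If $\operatorname{ord}(s) = 2$, then $s = -s$ and $\langle s \rangle \cap \Delta = \{0\}$, so $\Gamma = \Delta \times \langle s \rangle$. Lemma~\ref{lemma:splittingcayley} gives $G \cong K \bx K_2$, and Theorem~\ref{thm:cartesian-product-K2} then forces $K \cong C_k$ with $k$ odd or $k = 4$, matching the claim.

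If $\operatorname{ord}(s) \geq 3$, I first argue that $\operatorname{ord}(s)$ must be even: otherwise $\gcd(2, \operatorname{ord}(s)) = 1$ implies $\langle 2s\rangle = \langle s\rangle$, forcing $s \in \langle 2s\rangle \subseteq \Delta$ and contradicting $s \notin \Delta$. Write $\operatorname{ord}(s) = 2m$ with $m \geq 2$ and set $t := 2s \in \Delta \setminus \{0\}$. Here the inter-layer edges no longer form a perfect matching but a $2$-regular bipartite graph whose components are cycles of length~$2m$, and the goal is to show $G = K_{4,4}$. The plan is to exhibit two Hamiltonian cycles whose combinatorial invariants cannot match: a cycle $C_1$ using exactly two inter-layer edges (traverse a Hamiltonian cycle of $K$ on $V_1$, cross via $+s$, traverse the corresponding Hamiltonian cycle on $V_2$ in reverse, cross back via $-s$), and a cycle $C_2$ that uses many inter-layer edges, built from \emph{double-$s$ detours} $v \to v+s \to v+t$ which shift by $t$ inside a layer. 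I would then compare distance patterns and adjacencies at small cycle-distance, in the style of Lemmas~\ref{lem:cartesian-K2_odd_noncycle} and~\ref{thm:main_cayley}, together with the Hamilton-compression invariance of Lemma~\ref{lemma:samekappa}, to force $k$ and $m$ to be bounded. The remaining small cases $|G| \leq 16$ are then settled by Lemma~\ref{lemma:small-groups}, which isolates $K_{4,4}$ (corresponding to $k = 4$, $m = 2$, realised as $\Cay(\Z_4 \times \Z_2, \{(i,1): i \in \Z_4\})$) as the only Case~B graph in~$\mH$.

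The main technical hurdle is Case~B: unlike in the Cartesian product setting, the automorphism group of $G$ need not respect the layer decomposition, so an edge-type preservation argument as in Lemma~\ref{lem:prime-not-in-H} is not available. I would therefore lean on local adjacency invariants (counting pairs at a given cycle-distance that remain adjacent in $G$) and Hamilton-compression constraints rather than on preservation of edge types, ensuring that the detour cycle $C_2$ remains Hamiltonian and structurally distinguishable from $C_1$ whenever $k$ is not tiny.
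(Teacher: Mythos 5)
Your backward direction and your Case~A ($s=-s$) are correct and coincide with the paper's treatment: there $G\cong K\bx K_2$ and \cref{thm:cartesian-product-K2} finishes. Your observation that $\operatorname{ord}(s)$ must be even when $s\neq -s$ is also valid, and you correctly identify the central difficulty, namely that automorphisms of $G$ need not respect the two layers, so the edge-type preservation argument of \cref{lem:prime-not-in-H} is unavailable.

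However, Case~B ($\operatorname{ord}(s)\geq 3$) is where essentially all of the work lies, and your proposal contains a plan rather than a proof of it. You never write down the detour cycle $C_2$ explicitly, so it is not verified that the double-$s$ detours $v\to v+s\to v+2s$ can be assembled into a Hamiltonian cycle at all (visiting every vertex of the second layer exactly once while also exhausting the first layer is a nontrivial constraint depending on $K$ and on $t=2s$); no concrete invariant distinguishing $C_1$ from $C_2$ is specified; and no argument is given for why the comparison ``forces $k$ and $m$ to be bounded'' rather than being satisfiable for infinitely many $k$ --- which it must fail to be for the genuine exception $K_{4,4}$, so the invariant has to be chosen with some care. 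The paper's actual route through this case indicates what is missing: for $k$ odd it uses evenness of $\kappa(G)$ (\cref{lemma:divisibilitylemma}) to rotate the prism cycle by $k$, deduces that all reflections of $C_K$ are automorphisms of $K$, hence via \cref{thm:cyclic_generator} that $K\in\{C_k,K_k\}$, and then eliminates each alternative with bespoke cycle constructions depending on the offset $j$ of the second inter-layer matching ($j\in\{1,2\}$ versus $j\geq 3$); for $k$ even it runs a ``marked edge'' propagation argument comparing the prism cycle with a zigzag cycle to manufacture a third neighbour of $v_0$ in the opposite layer, contradicting the $2$-regularity of the inter-layer graph. None of these steps is routine, and your sketch provides no substitute for them; in addition, reducing the surviving cases to $K_{4,4}$ via \cref{lemma:small-groups} still requires checking which of the exceptional graphs in that lemma (e.g.\ $K_{k,k}$ for $k\neq 4$, complete graphs, small prisms) are actually compatible with the $l=2$, two-neighbours-across structure. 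Case~B must therefore be counted as a genuine gap.
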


\begin{proof}
	First, note that $C_4, K_{4,4}\in \mH$ and by Lemma~\ref{lem:oddcycle_prism}, we have $C_k \bx K_2 \in \mH$ when~$k$ is odd.
	It is left to prove the other direction of the assertion.
	For this, assume that $G \in \mH$.
	Let~$S' = S \setminus \{\pm s\}$, let $\Delta = \langle S' \rangle$, and set $K = \Cay(\Delta, S')$. Then $G$ has a group-induced $K$-$2$-layer structure as defined in the beginning of \cref{sec:group-induced-layer-structure}. Moreover, every vertex~$v$ has at most two neighbors on the other layer given by $v+s$ and $v-s$.
	In case~$s=-s$, we have~${G=K\bx K_2}$. So, either $G = K_2 \notin \mH$, $G=C_4$ or the assertion follows by \cref{thm:cartesian-product-K2}. Therefore, assume $s\neq -s$.
	In other words, $G$ contains $K\bx K_2$ with an additional disjoint matching between the two layers such that every vertex has precisely two neighbors in the other layer.
	Note that this immediately implies that $G$ cannot be the complete graph or a cycle.
	
	In this proof, we use some of the cycles that we already constructed for the proofs in \cref{sec:cartesian-K2}, and therefore, we use a similar notation.
	More precisely,~$G$ contains two copies of~$K$ and we denote the vertices in $G$ by $v_0, \dots, v_{k-1}, u_0, \dots, u_{k-1}$ where $v_0, \dots, v_{k-1}$ are the vertices in one copy of $K$ and $u_0, \dots, u_{k-1}$ are vertices in the other copy of $K$.
	Hereby, we number the vertices such that the isomorphism between the two copies is given by $u_i+s=v_i$.
	When we only talk about the graph $K$ (not as a subgraph of $G$), we denote the vertices by $w_0, \dots, w_{k-1}$ and hereby, we again use numbering such that there is an isomorphism mapping $w_i$ to $v_i$.
	We fix a Hamiltonian cycle $C_K$ of $K$ and number the vertices such that~$C_K=(w_0, \dots, w_{k-1})$.
	
	\textbf{Odd case.} First assume that $k$ is odd. 
	Using the cycle $C_K$ of $K$, we consider the Hamiltonian cycle of $G$ depicted in \cref{fig:prism}, which we denote by $C$.
	As $|G|=2k$ is even,~$\kappa(G)$ is even by \cref{lemma:divisibilitylemma}, and hence rotating~$C$ by~$k$ is an automorphism of~$G$. 
	This automorphism maps the first layer to the second layer and, therefore, induces an automorphism of $K$.
	This induced automorphism maps $w_i$ to~$w_{k-1-i}$, i.e., it is a reflection of $C_K$ at vertex $w_{(k-1)/2}$.
	By shifting the numbering of the vertices in~$C_K$, we obtain that~$\Aut(K)$ contains the reflections at every vertex of~$C_K$. 
	As $k$ is odd, these generate the dihedral group of order $2k$. This means that rotating $C_K$ by one is an automorphism of~$K$. By \cite[Section~1.4]{GRE24}, $K$ is the Cayley graph of a cyclic group and $S'$ contains a generator. 
	By Theorem~\ref{thm:cyclic_generator}, we have $K \cong C_k$ or~$K \cong K_k$.
	
	First assume that $K \cong K_k$.
	If $k>3$, we obtain that $G$ is a complete graph by Lemma~\ref{lemma:cayleycomplete} and we have already argued above that this leads to a contradiction.
	If~$k\leq 3$, we even have $k=3$ so that $|G|=6$. By \cref{lemma:small-groups}, there are no Cayley graphs of abelian groups in~$\mH$ on 6 vertices, except for $K_6, C_6, K_{3,3}$ and $K_3 \bx K_2$.
	As already argued, both $G=K_6$ and $G=C_6$ lead to a contradiction. Note that~$G=K_{3,3}$ leads to a contradiction as well: 
	we already restricted to the case where every vertex has precisely two neighbors in the other layer, and each vertex also has at least two neighbors in the same layer. Since every vertex in $K_{3,3}$ has degree 3, this is not possible.
	Therefore, the only case that remains here is $G=K_3 \bx K_2$.
	
	Therefore, assume that $K \cong C_k$.
	Let $v_j:=u_0-s$, i.e., the two neighbors of $u_0$ in the other layer are $v_0$ and $v_j$.
	Since each layer is a Cayley graph, which is a cycle, we obtain that~$S$ contains only one other element than $s$, which we denote by $s'$, and we have $K=\langle s' \rangle$.
	We obtain that $v_j=u_0-s=u_0+s+js'$ so that $-s=s+js'$.
	This proves that, for every~$u_i$, its two neighbors in the other layer are $v_i$ and $v_{i+j\bmod k}$.
	If $j\in\{1,2\}$, consider the Hamiltonian cycle of $G$ given by
	\begin{equation*}
		C^*=(v_0, u_0, v_j, u_j, v_{2j}, u_{2j}, \dots, v_{-j \bmod k}, u_{-j \bmod k}).
	\end{equation*}
	Note that $C^*$ has the following property: Every two vertices $a,a'$ with $d_{C^*}(a,a')=2$ are connected by a path of length $j$ that is not contained in the cycle.
	For $j\in\{1,2\}$ it is easy to see that the cycle $C$ does not have this property because it fails, e.g., for the vertices~$a=v_0$ and $a'=v_2$ (see \cref{fig:prism}).
	
	\begin{figure}
		\centering
		\begin{tikzpicture}[scale=1, every node/.style={circle, draw, inner sep=0pt, minimum size=3.5mm}]
			\foreach \x in {0,1,2,3,4,5,6,7,8} {
				\node (v\x) at (\x,1) {\footnotesize{$v_{\x}$}};
				\node (u\x) at (\x,0) {\footnotesize{$u_{\x}$}};
				\draw[gray] (v\x) to (u\x);
			}
			\foreach \x in {0,1,2,3,4,5,6,7} {
				\draw[gray] (v\x) to (v\the\numexpr \x+1 \relax);
				\draw[gray] (u\x) to (u\the\numexpr \x+1 \relax);
			}
			%Hamiltonian cycle
			\draw[blue, very thick] (v0) to (v1) to (u1) to (u2) to (v2) to (v3) to (u3) to (u4) to (v4) to (u0) to [bend right=15] (u8);
			\draw[blue, very thick] (u8) to (u7) to (u6) to (u5) to (v5) to (v6) to (v7) to (v8) to [bend right=15] (v0);
		\end{tikzpicture}
		\caption{Hamiltonian cycle in the proof of \cref{lemma:l=2} for odd $k$, depicted for $j=4$.}\label{fig:l=2_k-odd}
	\end{figure}
	
	Assume now that $j\geq 3$. Since $k$ is odd, by renumbering of the vertices, we can assume that $j$ is even and $4\leq j\leq k-3$.
	Consider the cycle depicted in \cref{fig:l=2_k-odd}.
	Note that, in this cycle, the pairs of vertices $\{v_1,v_2\}$, $\{u_2, u_3\}$, $\{v_3,v_4\}$ have distance~$3$ along the cycle and are neighboring.
	Opposed to this, in~$C$, there are only two pairs of vertices with this property, given by $\{v_1, u_1\}, \{v_{k-2}, u_{k-2}\}$.
	(Note that we have used $4\leq j\leq k-3$ to exclude that~$v_0$ and $u_2$ are adjacent.)
	This completes the proof of the assertion in the case where $k$ is odd.
	
	\textbf{Even case.}~From now on, we assume that~$k = 2k'$ is even. 
	For $k\leq 4$, one can check that the only additional Hamiltonian-transitive graph (given in \cref{lemma:small-groups}) arising in this case is $K_{4,4}$.
	Therefore, assume that $k\geq 6$.
	Again, fix a cycle~$C$ as in the middle of \cref{fig:cartesian-K2-even} and consider the cycle~$C'$ illustrated on the left side of \cref{fig:cartesian-K2-even}. 
	Every second edge on $C'$ lies completely in one of the layers. Those edges have the property that its neighbors on~$C'$ to both sides are adjacent. Hence, also every second edge of~$C$ fulfills this property. In the following, edges on $C$ with this property are called \emph{marked}.
	
	We show that $v_0$ and $u_2$ are adjacent.
	First assume that, among others, the edges $\{v_1, v_2\}, \{v_3, v_4\}, \dots, \{u_1, u_2\}, \{u_3, u_4\}, \dots$ are marked. Then~\(\{v_{k-3}, v_{k-2}\}\) is a marked edge and we have \(\{v_{k-4},v_{k-1}\} \in E(G)\).
	Consider the cycle~$C''$ depicted in Figure~\ref{fig:graphenl=2} that uses the edge $\{v_{k-3}, v_{k-2}\}$. 
	Rotating~$C''$ by $k$ maps the edge~$\{v_{k-2}, v_{k-1}\}$ to $\{v_0, u_2\}$, so~$v_0$ and $u_2$ are adjacent. 
	In case that the edges $\{v_0, v_1\}, \{v_2, v_3\}, \dots,$ $\{u_0, u_1\}, \{u_2, u_3\}, \dots$ are marked, it follows directly that $v_0$ and $u_2$ are adjacent by~$\{u_0,u_1\}$ being marked. 
	By symmetry in $u$ and $v$, we obtain also that $u_0$ and $v_2$ are adjacent. 
	Then, by the same argument with cycle
	~$C=(v_{k-2},v_{k-1},v_0,v_1,\dots,v_{k-3},$ $u_{k-3},u_{k-4},\dots,u_0,u_{k-1},u_{k-2})$ we obtain $u_{k-2}$ is adjacent to $v_0$. 
	Thus, $v_0$ has three neighbors on the second layer, which is a contradiction. 
\end{proof}

\begin{figure}
	\centering
	\begin{tikzpicture}[scale=1.15, every node/.style={circle, draw, inner sep=0pt, minimum size=3.5mm}]
		\foreach \x in {0,1,2,3,4,5,6,7} {
			\node (v\x) at (\x,1) {\footnotesize{$v_{\x}$}};
			\node (u\x) at (\x,0) {\footnotesize{$u_{\x}$}};
		}
		\foreach \x in {0,1,2,3,4,5,6,7} {
			\draw[gray] (v\x) to (u\x);
		}
		\draw[gray] (v4) to (v5);
		\draw[gray] (v6) to (v7);
		\draw[gray] (u6) to (u5);
		%Hamiltonian cycle
		\draw[blue, very thick] (v0) to (u0) to (u1) to (u2) to (u3) to (u4) to (u5) to (v5) to (v6) to (u6) to (u7) to (v7) to [bend right=20] (v4);
		\draw[blue, very thick] (v4) to (v3) to (v2) to (v1) to (v0);
	\end{tikzpicture}
	\caption{Hamiltonian cycle in the proof of Lemma~\ref{lemma:l=2} for even $k$, depicted for $k = 8$.}
	\label{fig:graphenl=2}
\end{figure}

\begin{restatable}{lemma}{lemmacayleyevenlthree}\label{lemma:l=3}
	Let $\Gamma$ be an abelian group of even order with generating set $S$ and~${G= \Cay(\Gamma,S)}$. Suppose there exists an $s \in S$ such that \(l = |\Gamma:\langle S \setminus \{\pm s\} \rangle| =3.\) Then $G \in \mH$ if and only if~$G \in \{K_{3,3}, C_3 \bx K_2\}$.
\end{restatable}

\begin{proof}
	As in the proof of Lemma~\ref{lemma:l=2}, let $S' = S \setminus \langle \pm s \rangle$, $K:=\Cay(\langle S' \rangle, S')$ and~${k = |K|}$. Then~$G$ has a $K$-$l$-layer structure.
	Since $|\Gamma|=lk=3k$ is even, we obtain that $k = 2k'$ for some~\(k' \in \N\). 
	The case $k' = 1$ is covered by Lemma~\ref{lemma:small-groups} yielding $G=K_{3,3}$ or $G=C_3 \bx K_2$ \linebreak since $G$ being complete or a cycle contradicts $l=3$. Thus, assume $k' \geq 2$. 
	Consider the Hamiltonian cycle $C$ depicted in Figure~\ref{fig:graphenl=3_kreis}. 
	Since $|G|$ is even, also $\kappa(G)$ is even by Lemma~\ref{lemma:divisibilitylemma}.
	Thus, $C$ is $2$-symmetric. 
	The corresponding rotation by $3k'$	maps the edge~$\{v_{k'}, u_{k'}\}$ to~$\{w_{k-1},u_0\}$. 
	Since the only neighbor of $u_0$ on the third layer is $w_0$, this is a contradiction.
\end{proof}

\cref{thm:cayley_even} now follows immediately by combining Lemmas~\ref{thm:main_cayley}, \ref{lemma:l=2}, and \ref{lemma:l=3}.

\section{Constructions of regular Hamiltonian-transitive graphs}\label{sec:constructions}

In this section, we give two constructions for families of regular Hamiltonian-transitive graphs. Somewhat in contrast to Sections~\ref{sec:cartprod} and~\ref{sec:cayley}, this underlines that there are many regular Hamiltonian-transitive graphs.

\begin{remark}
	\label{rem:reg-graphs-in-H}
	For every \(d \geq 2\), there are infinitely many \(d\)-regular Hamiltonian-transitive graphs in \(\mH\): We construct such graphs as follows. For an arbitrary $n\geq 3$, take \(C_n\) and \(n\) disjoint copies of \(K_{d+1}\). 
	Remove one edge from each copy of \(K_{d+1}\) and replace each vertex~$v$ of \(C_n\) by a copy of \(K_{d+1}\) such that the two incident edges to $v$ are now incident to the two vertices of degree \(d-1\). See~\Cref{fig:constr-d-reg} for an illustration.
	This yields a \(d\)-regular Hamiltonian graph. It is Hamiltonian-transitive because every Hamiltonian cycle uses all of the edges of~$C_n$ and visits the vertices in each copy of $K_{d+1}$ in some arbitrary order. It is immediate that this order can be permuted by applying an automorphism.
\end{remark}

\begin{figure}[h!]
	\begin{center}
		\begin{tikzpicture}[scale=0.7]	
			\foreach \i in {1,...,5} {
				\node[draw=black, fill=white, shape=circle, inner sep=2.5pt] (\i) at ({360/20 * (4*\i+1)}:2) {\scriptsize \(K_{d+1}\)};
			}
			
			\foreach \i in {1,...,10} {
				\node[style=vertex, fill=white] (\i) at ({360/10 * (\i-1)}:2) {};
			}
			
			\foreach \i/\j in {2/3,4/5,6/7,8/9,10/1}{
				\draw[thick] (\i) to (\j);
			}
			
			\foreach \i/\j in {1/2,3/4,5/6,7/8,9/10}{
				\draw[line width=1.5pt, red, opacity=0.5] (\i) to (\j);
			}
			
			\foreach \i in {1,...,5} {
				\node[draw=black, fill=none, shape=circle, inner sep=2.5pt] (\i) at ({360/20 * (4*\i+1)}:2) {\scriptsize \(K_{d+1}\)};
			}
			
			\foreach \i in {1,...,10} {
				\node[style=vertex, fill=white] (\i) at ({360/10 * (\i-1)}:2) {};
			}
		\end{tikzpicture}
		\hspace{0.5cm}
		\begin{tikzpicture}[scale=0.7]
			\foreach \i in {1,...,10} {
				\node[style=vertex] (\i) at ({360/10 * (\i-1)}:2) {};
			}
			
			\foreach \j in {1,...,20} {
				\ifthenelse{\j=2 \OR \j=6 \OR \j=10 \OR \j=14 \OR \j=18}{
					\node[style=vertex, fill=white] (\j;in) at ({360/20 * (\j - 1)}:1.5) {};
					\node[style=vertex, fill=white] (\j;out) at ({360/20 * (\j - 1)}:2.5) {};
					\draw[thick] (\j;in) to (\j;out);
				}
				{}
			}
			
			\foreach \i/\j in {2/3,4/5,6/7,8/9,10/1}{
				\draw[thick] (\i) to (\j);
			}
			
			\foreach \i/\j in {1/2,3/6,5/10,7/14,9/18}{
				\draw[thick] (\i) to (\j;out);
				\draw[thick] (\i) to (\j;in);
			}
			
			\foreach \i/\j in {2/2,4/6,6/10,8/14,10/18}{
				\draw[thick] (\i) to (\j;out);
				\draw[thick] (\i) to (\j;in);
			}
		\end{tikzpicture}
	\end{center}
	\caption{The construction of the graphs in~\cref{rem:reg-graphs-in-H} for \(n=5\). In each \(K_{d+1}\) the removed edge is indicated in red. On the right you can see the construction for \(d=3.\)}
	\label{fig:constr-d-reg}
\end{figure}
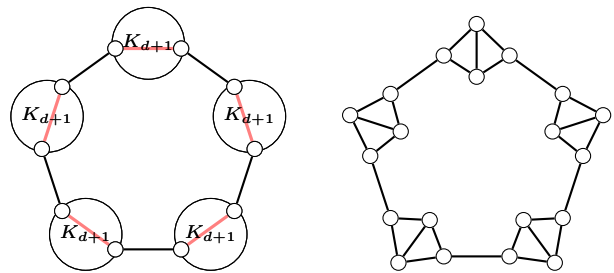

This is in strong contrast to uniquely Hamiltonian graphs, where it is known that no uniquely Hamiltonian \(d\)-regular graphs exist for all odd values of \(d\) and all \(d > 22\) \cite{Thomason78,Tutte46,SheConj22}, and Sheehan's Conjecture asserts that no such graphs exist for any \(d \geq 3\)~\cite{SHE75}.

Note that the constructed graphs in \cref{rem:reg-graphs-in-H} have 2-edge separators and this structure forces certain edges to be contained in every Hamiltonian cycle.
Opposed to this, for cubic graphs, we construct an infinite family of 3-connected Hamiltonian-transitive graphs.
This shows that there are many different cubic Hamiltonian-transitive graphs besides the odd prisms from \cref{thm:cayley_even}. In particular, this construction yields a family of Hamiltonian-transitive graphs with an unbounded number of edge orbits. 
An \textit{edge orbit} of a graph is an equivalence class of edges under the action of~\(\Aut(G)\). More precisely, we show the following. 

\begin{restatable}{proposition}{cubgraphsinH}
	\label{thm:cub-graphs-in-H}	
	There exists a family of cubic 3-connected Hamiltonian-tran\-si\-tive graphs with an unbounded number of edge orbits.
\end{restatable}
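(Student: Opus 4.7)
The plan is to construct the desired family as iterated truncations of a cubic Hamiltonian-transitive base graph. Recall that the \emph{truncation} $\trunc(G)$ of a cubic graph $G$ is obtained by replacing each vertex $v \in V(G)$ by a triangle $T_v = \{v_1, v_2, v_3\}$ whose vertices correspond to the three edges incident to $v$, with two triangle vertices in different triangles joined in $\trunc(G)$ exactly when they correspond to the same edge of $G$. The graph $\trunc(G)$ is manifestly cubic, and a short case analysis on small vertex cuts shows that 3-connectedness is preserved under truncation.

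The key step is to establish a bijection between Hamiltonian cycles of $G$ and Hamiltonian cycles of $\trunc(G)$ that is compatible with the natural embedding $\Aut(G) \hookrightarrow \Aut(\trunc(G))$. Given a Hamiltonian cycle $C$ of $\trunc(G)$, it must use exactly two of the three edges in each triangle $T_v$ (a simple degree count at the three triangle vertices rules out the other options); the unique vertex of $T_v$ meeting both chosen triangle edges is an \emph{apex} whose external edge is excluded from $C$. Collecting these excluded external edges yields a perfect matching $M$ of $G$, and connectivity of $C$ forces $E(G) \setminus M$ to be a single Hamiltonian cycle of $G$. Conversely, every Hamiltonian cycle of $G$ lifts canonically to a Hamiltonian cycle of $\trunc(G)$, and every $\varphi \in \Aut(G)$ extends to an automorphism $\trunc(\varphi) \in \Aut(\trunc(G))$ respecting the bijection. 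Hence the class of cubic, 3-connected, Hamiltonian-transitive graphs is closed under truncation; setting $G_0 := K_4$ and $G_n := \trunc(G_{n-1})$ produces the desired infinite family.

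It remains to show that the number of edge orbits of $G_n$ tends to infinity. For $n \geq 1$, the only 3-cycles of $G_n = \trunc(G_{n-1})$ are the triangles introduced by the truncation, since any triangle of $G_{n-1}$ becomes a 6-cycle in $\trunc(G_{n-1})$. Consequently, every automorphism of $G_n$ preserves the partition of $E(G_n)$ into \emph{triangle edges} (those lying in some 3-cycle) and \emph{external edges}, and the induced action on the contracted graph $G_{n-1}$ identifies $\Aut(G_n)$ with a subgroup of $\Aut(G_{n-1})$. Via the natural bijection between external edges of $G_n$ and edges of $G_{n-1}$, the orbits of external edges of $G_n$ refine the edge orbits of $G_{n-1}$, so the number of external-edge orbits of $G_n$ is at least the number of edge orbits of $G_{n-1}$. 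Adding at least one more orbit for the triangle edges yields the recursion $\#\text{edge orbits of }G_n \geq 1 + \#\text{edge orbits of }G_{n-1}$, and hence unbounded growth by induction.

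The main technical obstacle is verifying that the 2-regular subgraph of $\trunc(G)$ produced from a perfect matching whose complement is Hamiltonian in $G$ is indeed connected (rather than decomposing into several disjoint cycles); this requires carefully tracing the lifted walk and leveraging the connectivity of the original Hamiltonian cycle in $G$. A secondary subtlety is the base case: $K_4$ itself contains 3-cycles, but these are destroyed upon truncation, so the inductive 3-cycle analysis applies cleanly starting from $G_1 = \trunc(K_4)$.
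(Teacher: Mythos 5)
Your proposal is correct, and it splits into two halves of different character relative to the paper. The first half---showing that truncation preserves Hamiltonian transitivity via the correspondence between Hamiltonian cycles of $G$ and of $\trunc(G)$ (each truncation triangle is traversed exactly once, so every Hamiltonian cycle of $\trunc(G)$ projects to one of $G$ and every automorphism of $G$ lifts) ---is essentially the paper's \cref{lem:trunc-in-H}, with the same degree-count at the triangle vertices and the same lifting of automorphisms. Where you genuinely diverge is in certifying that the number of edge orbits of $\trunc^n(K_4)$ grows. The paper uses the numerical invariant $\ell(e)$, the length of a shortest cycle through $e$: \cref{lem:trunc-ell} shows truncation doubles $\ell$ on the inherited edges, and \cref{lem:trunc-edge-orbits} exhibits $n$ edges with pairwise distinct $\ell$-values. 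You instead argue group-theoretically: for $n\geq 1$ the only $3$-cycles of $G_n=\trunc(G_{n-1})$ are the truncation triangles, so every automorphism preserves the partition into triangle edges and external edges and projects to an automorphism of $G_{n-1}$; since the action on external edges factors through a subgroup of $\Aut(G_{n-1})$, the external-edge orbits of $G_n$ are at least as numerous as the edge orbits of $G_{n-1}$, and the triangle edges add at least one further orbit, giving $\#\mathrm{orbits}(G_n)\geq 1+\#\mathrm{orbits}(G_{n-1})$. Both arguments are sound; yours is slightly more general (the paper's \cref{lem:trunc-edge-orbits} assumes all edges of the base graph share the same value of $\ell$, which your argument does not need), while the paper's invariant is more explicit about which edges lie in distinct orbits. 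One small remark: the fact that triangles of $G_{n-1}$ become $6$-cycles is not by itself why $G_n$ has no $3$-cycles other than the truncation triangles; the clean reason is that a $3$-cycle through an external edge $\{u_v,v_u\}$ would need a common neighbour of $u_v$ and $v_u$ besides each other, and their remaining neighbours lie in the disjoint triangles $T_u$ and $T_v$.
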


For this, we use the truncation of a cubic graph, which is a classical concept often used in the literature (e.g., \cite{EibenJS19,GuoMohar2024,Sachs63}). The \emph{truncation} \(\trunc(G)\) of a cubic graph \(G\) is the cubic graph obtained by replacing each vertex~\(v\) in \(G\) with a triangle and adding for each edge~\(\{u,v\}\) of \(G\) an edge between two vertices of the triangles corresponding to \(u\) and \(v\). This is illustrated in \cref{fig:truncation}. For a vertex \(v\) in \(G\) with neighbors~\(\{x,y,z\}\), we denote the corresponding vertices in~\(\trunc(G)\) by~\(\{v_x,v_y,v_z\}\).
Note that the truncation of a~3-connected graph is again~3-connected.

\begin{figure}
	\centering
	% K_4
	\begin{tikzpicture}[scale=0.9]
		\useasboundingbox (-0.1,-0.35) rectangle (3.1,3.35);
		% edges
		% triangles
		\draw[thick] (0.75,0.75) -- (0.75,0) -- (0,0.75) -- (0.75,0.75);
		\draw[thick] (0.75,2.25) -- (0,2.25) -- (0.75,3) -- (0.75,2.25);
		\draw[thick] (2.25,2.25) -- (2.25,3) -- (3,2.25) -- (2.25,2.25);
		\draw[thick] (2.25,0.75) -- (3,0.75) -- (2.25,0) -- (2.25,0.75);
		% edges from K_4
		\draw[thick] (0.75,0.75) -- (2.25,2.25);
		\draw[thick] (0.75,2.25) -- (2.25,0.75);
		\draw[thick] (0.75,0) -- (2.25,0);
		\draw[thick] (0.75,3) -- (2.25,3);
		\draw[thick] (3,2.25) -- (3,0.75);
		\draw[thick] (0,0.75) -- (0,2.25);
		% vertices
		\foreach \x in {0.75,2.25} {
			\foreach \y in {0,0.75,2.25,3} {
				\node [style=vertex] (\x,\y) at (\x,\y) {};
			}
		}
		\foreach \x in {0,3} {
			\foreach \y in {0.75,2.25} {
				\node [style=vertex] (\x,\y) at (\x,\y) {};
			}
		}
	\end{tikzpicture}
	\hspace{5mm}
	% K_3,3
	\begin{tikzpicture}[scale=0.9]
		\useasboundingbox (-0.1,-0.35) rectangle (5.1,3.35);
		% edges
		% triangles
		\draw[thick] (0.75,2.25) -- (0,2.25) -- (0.75,3) -- (0.75,2.25);
		\draw[thick] (2.5,2.25) -- (2,3) -- (3,3) -- (2.5,2.25);
		\draw[thick] (4.25,2.25) -- (4.25,3) -- (5,2.25) -- (4.25,2.25);
		
		\draw[thick] (0.75,0.75) -- (0,0.75) -- (0.75,0) -- (0.75,0.75);
		\draw[thick] (2.5,0.75) -- (2,0) -- (3,0) -- (2.5,0.75);
		\draw[thick] (4.25,0.75) -- (4.25,0) -- (5,0.75) -- (4.25,0.75);
		% edges from K_3,3
		\foreach \x in {0.75,3} {
			\foreach \y in {0,3} {
				\draw[thick] (\x,\y) -- (\x+1.25,\y);
			}
		}
		\foreach \x in {0,2.5,5} {
			\draw[thick] (\x,2.25) -- (\x,0.75);
		}
		\draw[thick] (0.75,0.75) -- (4.25,2.25);
		\draw[thick] (0.75,2.25) -- (4.25,0.75);
		
		% vertices
		\foreach \x in {0.75,2,3,4.25} {
			\foreach \y in {0,3} {
				\node [style=vertex] (\x,\y) at (\x,\y) {};
			}
		}
		\foreach \x in {0,0.75,2.5,4.25,5} {
			\foreach \y in {0.75,2.25} {
				\node [style=vertex] (\x,\y) at (\x,\y) {};
			}
		}
	\end{tikzpicture}
	\hspace{5mm}
	% cube
	\raisebox{1ex}{
		\begin{tikzpicture}[scale=0.8]
			% edges
			% triangles
			\draw[thick] (0.5,0.5) -- (0.5,0) -- (0,0.5) -- (0.5,0.5);
			\draw[thick] (0.5,3) -- (0,3) -- (0.5,3.5) -- (0.5,3);
			\draw[thick] (3,3) -- (3,3.5) -- (3.5,3) -- (3,3);
			\draw[thick] (3,0.5) -- (3.5,0.5) -- (3,0) -- (3,0.5);
			
			\draw[thick] (1,1) -- (1.5,1) -- (1,1.5) -- (1,1);
			\draw[thick] (1,2.5) -- (1,2) -- (1.5,2.5) -- (1,2.5);
			\draw[thick] (2.5,2.5) -- (2,2.5) -- (2.5,2) -- (2.5,2.5);
			\draw[thick] (2.5,1) -- (2,1) -- (2.5,1.5) -- (2.5,1);
			
			% edges from cube
			\draw[thick] (0.5,0.5) -- (1,1);
			\draw[thick] (2.5,2.5) -- (3,3);
			\draw[thick] (0.5,3) -- (1,2.5);
			\draw[thick] (2.5,1) -- (3,0.5);
			
			\draw[thick] (0.5,0) -- (3,0);
			\draw[thick] (0.5,3.5) -- (3,3.5);
			\draw[thick] (3.5,3) -- (3.5,0.5);
			\draw[thick] (0,0.5) -- (0,3);
			
			\draw[thick] (1.5,1) -- (2,1);
			\draw[thick] (1.5,2.5) -- (2,2.5);
			\draw[thick] (2.5,2) -- (2.5,1.5);
			\draw[thick] (1,1.5) -- (1,2);
			
			% vertices
			\foreach \x in {0,3.5} {
				\foreach \y in {0.5,3} {
					\node [style=vertex] (\x,\y) at (\x,\y) {};
				}
			}
			\foreach \x in {0.5,3} {
				\foreach \y in {0,0.5,3,3.5} {
					\node [style=vertex] (\x,\y) at (\x,\y) {};
				}
			}
			\foreach \x in {1,2.5} {
				\foreach \y in {1,1.5,2,2.5} {
					\node [style=vertex] (\x,\y) at (\x,\y) {};
				}
			}
			\foreach \x in {1.5,2} {
				\foreach \y in {1,2.5} {
					\node [style=vertex] (\x,\y) at (\x,\y) {};
				}
			}
		\end{tikzpicture}
	}
	\caption{The truncation of \(K_4\), \(K_{3,3}\), and the cube. These provide further examples of cubic vertex-transitive Hamiltonian-transitive graphs besides the ones from \cref{thm:cayley_even}.}
	\label{fig:truncation}
\end{figure}
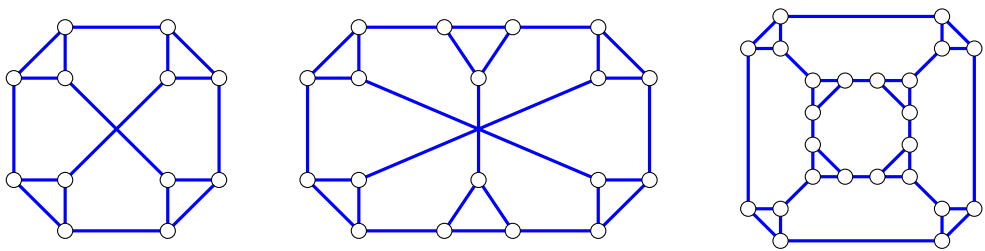	

Now we show that the truncation operation preserves Hamiltonian transitivity. Then repeatedly applying it to a Hamiltonian-transitive base graph yields a family of Hamiltonian-transitive graphs.

\begin{restatable}{lemma}{truncinH}
	\label{lem:trunc-in-H}	
	If $G$ is a cubic graph in $\mH$, then $\trunc(G)\in \mH$.
\end{restatable}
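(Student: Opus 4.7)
The plan is to establish an equivariant bijection between Hamiltonian cycles of $G$ and Hamiltonian cycles of $\trunc(G)$, and then use Hamiltonian transitivity of $G$ together with the natural embedding $\Aut(G) \hookrightarrow \Aut(\trunc(G))$ to transfer the property to $\trunc(G)$.

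First, I would analyze the local structure of any Hamiltonian cycle $C_T$ of $\trunc(G)$ at a triangle $T_v = \{v_x, v_y, v_z\}$ corresponding to a vertex $v$ of $G$ with neighbors $x, y, z$. Each vertex of $\trunc(G)$ has degree three, consisting of two triangle edges and one matching edge, and $C_T$ uses exactly two edges at each vertex. Letting $k$ denote the number of matching edges at $T_v$ used by $C_T$, a quick degree count shows that the number of triangle edges of $T_v$ used is $3 - k/2$, forcing $k$ to be even, and $k = 0$ is impossible since $T_v$ alone is not a Hamiltonian cycle of $\trunc(G)$. Hence $k = 2$: exactly one vertex of $T_v$, say $v_z$, has its matching edge unused; both triangle edges incident to $v_z$ must then be used, so $C_T$ crosses $T_v$ along the path $v_x - v_z - v_y$. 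Contracting each triangle of $\trunc(G)$ to a point therefore turns $C_T$ into a 2-regular spanning subgraph of $G$, which by connectivity of $C_T$ must be a single Hamiltonian cycle $C_G$ of $G$. Conversely, given any Hamiltonian cycle $C_G$ of $G$, at each $v \in V(G)$ exactly one incident edge $\{v,z\}$ is unused; inserting the triangle path $v_x - v_z - v_y$ into $T_v$ and keeping the matching edges corresponding to $E(C_G)$ yields a unique Hamiltonian cycle $C_T$ of $\trunc(G)$. These two constructions are mutually inverse, giving a bijection $\Phi$ between Hamiltonian cycles of $G$ and of $\trunc(G)$.

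Next, I would note that every $\varphi \in \Aut(G)$ extends to $\tilde\varphi \in \Aut(\trunc(G))$ via $\tilde\varphi(v_x) := \varphi(v)_{\varphi(x)}$: triangle edges are sent to triangle edges and matching edges to matching edges since $\varphi$ preserves adjacency in $G$. The bijection $\Phi$ is equivariant with respect to this embedding: the cycle $\Phi(C_G)$ is defined purely in terms of the incidence structure of $C_G$ at each vertex of $G$, so $\tilde\varphi(\Phi(C_G)) = \Phi(\varphi(C_G))$.

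Now, let $C_T$ and $C_T'$ be any two Hamiltonian cycles of $\trunc(G)$, and set $C_G := \Phi^{-1}(C_T)$ and $C_G' := \Phi^{-1}(C_T')$. Since $G \in \mH$, there exists $\varphi \in \Aut(G)$ with $\varphi(C_G) = C_G'$, and by equivariance $\tilde\varphi(C_T) = \tilde\varphi(\Phi(C_G)) = \Phi(\varphi(C_G)) = \Phi(C_G') = C_T'$. Thus $\trunc(G) \in \mH$. The main obstacle is the local analysis establishing the bijection, specifically verifying that $k = 2$ at every triangle and that the resulting contracted subgraph of $G$ is a single cycle; once this is in place, the equivariance and the conclusion are routine.
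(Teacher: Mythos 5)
Your proposal is correct and follows essentially the same route as the paper: both establish that every Hamiltonian cycle of $\trunc(G)$ arises uniquely by extending a Hamiltonian cycle of $G$ through each triangle, and both lift automorphisms via $v_u \mapsto \varphi(v)_{\varphi(u)}$ to transfer transitivity. Your explicit parity count at each triangle is just a more detailed version of the paper's observation that each triangle has exactly three outgoing edges and is therefore traversed exactly once.
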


\begin{proof}
	First, observe that a Hamiltonian cycle \(C\) in $G$ naturally extends to a Hamiltonian cycle \(\tilde{C}\) in \(\trunc(G)\): For each three consecutive vertices \((u,v,w)\) in \(C\), choose the corresponding sequence of vertices \((u_v,v_u,v_x,v_w,w_v)\) in \(\trunc(G)\), where \(x\) is the third neighbor of \(v\) in \(G\) besides \(u\) and \(w\). 
	
	Indeed, every Hamiltonian cycle in \(\trunc(G)\) is such a natural extension of a Hamiltonian cycle in \(G\):
	Since every triangle \(T\) in $\trunc(G)$ has precisely three edges to vertices in \(V(\trunc(G)) \setminus \{T\}\), it is traversed exactly once by every Hamiltonian cycle. Thus every Hamiltonian cycle in $\trunc(G)$ contains the subsequences \((v_x,v_y,v_z)\) for every vertex \(v \in G\) and its three neighbors~\(\{x,y,z\}\). Hence it is an extension of a Hamiltonian cycle in $G$.
	
	Now let \(\varphi\) be an automorphism of \(G\) mapping a Hamiltonian cycle \(C_1\) to another Hamiltonian cycle \(C_2\). Let \(\tilde{\varphi} : \trunc(G) \to \trunc(G)\) with \(\tilde{\varphi}(v_u)=\varphi(v)_{\varphi(u)}\). This is an automorphism of~\(\trunc(G)\) since two vertices \(u_v\) and \(x_y\) are adjacent in \(\trunc(G)\) if and only if (i) \(u=x\) or (ii)~\(u=y\), \(v=x\) and \(\{u,v\} \in E(G)\). Further, \(\tilde{\varphi}\) maps \(\tilde{C_1}\) to \(\tilde{C_2}\) since \(\varphi\) maps \(C_1\) to \(C_2\).
	
	Together we obtain that every two Hamiltonian cycles of \(\trunc(G)\) are of the form~\(\tilde{C_1}\) and~\(\tilde{C_2}\) for Hamiltonian cycles~\(C_1\) and~\(C_2\) of \(G\). Since \(G\) is in \(\mH\), there is an automorphism~\(\varphi\) of \(G\) mapping~\(C_1\) to~\(C_2\). Then~\(\tilde{\varphi}\) is an automorphism of~\(\trunc(G)\) mapping~\(\tilde{C_1}\) to \(\tilde{C_2}\). Hence~\(\trunc(G)\) is in~\(\mH\).
\end{proof}

Next, we argue that the truncation operation increases the number of edge orbits. While this is well-known, we include the proof for self-containment.
For an edge~\(e\), we denote by \(\ell(e)\) the length of the shortest cycle containing~\(e\). This clearly is invariant under graph automorphisms.

\begin{lemma}
	\label{lem:trunc-ell}
	For every edge \(\{u,v\}\) in a cubic graph \(G\), the corresponding edge \(\{u_v,v_u\}\) in~\(\trunc(G)\) satisfies~\(\ell(\{u_v,v_u\})= 2 \ell(\{u,v\})\).
\end{lemma}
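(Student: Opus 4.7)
The plan is to set up a two-way length-doubling correspondence between cycles of $G$ containing $\{u,v\}$ and cycles of $\trunc(G)$ containing $\{u_v,v_u\}$, and then minimize over both sides. Write $T_w$ for the triangle in $\trunc(G)$ replacing a vertex $w$ of $G$, and call an edge of $\trunc(G)$ \emph{internal} if it lies inside some $T_w$ and \emph{external} otherwise; the external edges are precisely those of the form $\{x_y, y_x\}$ for $\{x,y\}\in E(G)$.

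For the upper bound $\ell(\{u_v,v_u\}) \leq 2\,\ell(\{u,v\})$, take a shortest cycle $C=(w_0,\dots,w_{k-1})$ in $G$ through $\{u,v\}$, so $k=\ell(\{u,v\})$, and build $\tilde C$ in $\trunc(G)$ by traversing, for each $i$ in cyclic order, the internal edge $\{(w_i)_{w_{i-1}},(w_i)_{w_{i+1}}\}$ of $T_{w_i}$ followed by the external edge $\{(w_i)_{w_{i+1}},(w_{i+1})_{w_i}\}$. This is a cycle of length $2k$ in $\trunc(G)$ that contains the external edge $\{u_v,v_u\}$. For the reverse inequality, let $\tilde C$ be any cycle of $\trunc(G)$ through $\{u_v,v_u\}$ and project it to a sequence $\pi(\tilde C)$ of edges of $G$ by replacing each external edge $\{x_y,y_x\}$ of $\tilde C$ by $\{x,y\}$; let $k$ be the number of external edges of $\tilde C$. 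Because the triangles $T_w$ are pairwise vertex-disjoint and $\tilde C$ visits each vertex of $\trunc(G)$ at most once, the restriction of $\tilde C$ to any single triangle $T_w$ is either empty or a contiguous path, so $\pi(\tilde C)$ visits each vertex of $G$ at most once. Ruling out $k\leq 2$ (the case $k=1$ cannot close a cycle through an external edge, and $k=2$ would require two external edges between the same pair of triangles, which is forbidden by simplicity of $G$), we conclude $\pi(\tilde C)$ is a cycle of length $k$ in $G$ containing $\{u,v\}$, so $k\geq \ell(\{u,v\})$. Moreover, at each of the $k$ visited triangles, $\tilde C$ enters and exits at two distinct triangle vertices and hence uses at least one internal edge there; counting external and internal contributions separately yields
\[
	|\tilde C|\;\geq\; k + k \;=\; 2k \;\geq\; 2\,\ell(\{u,v\}).
\]
Minimizing over $\tilde C$ gives $\ell(\{u_v,v_u\})\geq 2\,\ell(\{u,v\})$, and combined with the upper bound this proves the lemma.

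The one conceptual subtlety is verifying that projection returns an honest simple cycle in $G$ rather than a degenerate closed walk; once the vertex-disjointness of the triangles and the simplicity of $G$ are used to exclude $k\leq 2$, the remaining internal-versus-external edge book-keeping is routine.
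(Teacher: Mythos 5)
Your proof is correct and follows essentially the same route as the paper: the upper bound via the natural length-doubling lift of a shortest cycle through $\{u,v\}$, and the lower bound via projecting a cycle of $\trunc(G)$ back to $G$ by contracting triangles. You simply spell out in more detail the bookkeeping (each visited triangle contributes exactly two external edge-endpoints and at least one internal edge, and the small cases $k\leq 2$ are excluded) that the paper leaves implicit.
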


\begin{proof}
	A cycle \(C\) in \(G\) naturally corresponds to a cycle \(\hat{C}\) in \(\trunc(G)\), by choosing for each three consecutive vertices \((u,v,w)\) in \(C\), the vertices \((u_v,v_u,v_w,w_v)\) to be in \(\hat{C}\). Then \(|\hat{C}| = 2|C|\). 
	Having this, we directly obtain \(\ell(\{u_v,v_u\}) \leq 2 \ell(\{u,v\})\), by taking a cycle \(C\) containing~\(\{u,v\}\) of length~\(\ell(\{u,v\})\) and then considering the cycle~\(\hat{C}\) in~\(\trunc(G)\). This clearly contains~\(\{u_v,v_u\}\) and is of length~\(2 \ell(\{u,v\})\). 
	Further, to every cycle \(C'\) in \(\trunc(G)\) of length greater 3 there is a natural corresponding cycle \(\pi(C')\) in~\(G\) of at most half the length by shrinking the triangles to vertices and removing the second and possibly third occurrence of a vertex forgetting about the information of the truncation.
	Now assume there is a cycle \(C'\) in \(\trunc(G)\) of length less than \(2 \ell(\{u,v\})\) containing \(\{u_v,v_u\}\). Then \(\pi(C')\) is a cycle in \(G\) of length less then \(\ell(\{u,v\})\) containing \(\{u,v\}\), which is a contradiction.
\end{proof}

\begin{restatable}{lemma}{truncedgeorbit}
	\label{lem:trunc-edge-orbits}	
	Let \(k \geq 3\) and \(G\) be a cubic graph such that all edges \(e\) of \(G\) satisfy \(\ell(e) = k\). Then the family \(\{\trunc^n(G)\}_{n \in \N}\) has an unbounded number of edge orbits.
\end{restatable}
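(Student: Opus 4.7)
The plan is to iterate \cref{lem:trunc-ell}. Set $G_0 := G$ and $G_n := \trunc(G_{n-1})$ for $n \geq 1$. By assumption, every edge of $G_0$ has $\ell$-value equal to $k$. The key observation, which I would prove inductively, is that in $G_n$ the edges fall into $n+1$ natural ``generations'' with distinct $\ell$-values:
\begin{itemize}
    \item edges descending from the original edges of $G_0$ (by repeated application of \cref{lem:trunc-ell}) have $\ell$-value $2^n k$;
    \item edges of the triangles introduced in the $i$-th truncation step (for $i \in \{1, \dots, n\}$) have $\ell$-value $3 \cdot 2^{n-i}$ in $G_n$.
\end{itemize}
The second claim follows because any triangle edge freshly introduced in $\trunc(\cdot)$ clearly has $\ell = 3$ (it sits in a triangle, and the graphs are simple), and from then on \cref{lem:trunc-ell} doubles its $\ell$-value with each further truncation.

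The values $\{3, 6, 12, \dots, 3 \cdot 2^{n-1}, 2^n k\}$ are pairwise distinct because $k \geq 3$ gives $2^n k \geq 3 \cdot 2^n > 3 \cdot 2^{n-1}$, so the ``original'' value is strictly larger than every triangle-generation value, and the triangle-generation values are pairwise distinct by construction. Since $\ell(\cdot)$ is invariant under graph automorphisms, edges with different $\ell$-values lie in different edge orbits of $\Aut(G_n)$. Hence $G_n$ has at least $n+1$ edge orbits, and the family $\{\trunc^n(G)\}_{n \in \N}$ has an unbounded number of edge orbits.

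The only step that requires any care is verifying the inductive claim that the triangle edges introduced at step $i$ all share the same $\ell$-value in $G_n$, but this is immediate from \cref{lem:trunc-ell} applied edge by edge together with the fact that the freshly introduced triangle edges at any truncation step all have $\ell = 3$ in that step. No real obstacle arises; the argument is essentially a bookkeeping of how $\ell$ evolves under iterated truncation.
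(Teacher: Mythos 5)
Your proposal is correct and follows essentially the same route as the paper: both arguments iterate \cref{lem:trunc-ell} to double the $\ell$-values of descendant edges, observe that each truncation step introduces fresh triangle edges with $\ell=3$, and conclude distinctness of $\ell$-values (hence of edge orbits) by automorphism-invariance of $\ell$. Your version merely makes the values explicit ($2^nk$ and $3\cdot 2^{n-i}$), yielding $n+1$ rather than $n$ orbits, which is a cosmetic strengthening rather than a different argument.
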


\begin{proof}
	By induction we show that \(\trunc^n(G)\) has at least \(n\) edges with pairwise different values for~\(\ell\). Since \(\ell\) is invariant under graph automorphism, this then yields that~\(\trunc^n(G)\) has at least \(n\) edge orbits.
	
	The statement for the base case \(n=1\) is trivial. Now, assume that it holds for~\(n\). Then we find \(n\) edges \(\{\{u^1,v^1\}, \dots, \{u^n,v^n\}\}\) in \(\trunc^n(G)\) with pairwise distinct values for \(\ell\).
	Now, by applying \cref{lem:trunc-ell} every edge \(\{u^i_{v^i},v^i_{u^i}\}\) in \(\trunc^n(G)\) satisfies \(\ell(e)=2\ell(\{u^i,v^i\})\). This gives \(n\) edges with pairwise distinct values of \(\ell\), each of which is larger than \(3\). Further, by taking any vertex \(x \in V(G)\) with neighbors \(y\) and \(z\), the edge \(\{x_y,x_z\} \in E(\trunc(G))\) has~\(\ell(\{x_y,x_z\})=3\). Thus we find \(n+1\) edges in \(\trunc(G)\) with pairwise distinct values for~\(\ell\).
\end{proof}

Now we have all tools at hand to prove \cref{thm:cub-graphs-in-H}.

\begin{proof}[Proof of \cref{thm:cub-graphs-in-H}]
	Choose \(K_4\) as a base graph. Then \(\{\trunc^n(K_4)\}_{n \in \N}\) is a family of Hamiltonian-transitive graphs by \cref{lem:trunc-in-H}. Further, this family has unbounded number of edge orbits by~\cref{lem:trunc-edge-orbits}.
\end{proof}

Repeatedly applying the truncation operation to other cubic Hamiltonian-tra\-nsi\-tive base graphs besides \(K_4\), e.g., to odd prisms, yields even more families of graphs in \(\mH\).

%%%%%%%%%%%%%%%%%%%%%%%%%%%%%%%%%%%%%%%%%%%%%
%%% graphs with many Hamiltonian cycles
%%%%%%%%%%%%%%%%%%%%%%%%%%%%%%%%%%%%%%%%%%%%

\section{Graphs with many Hamiltonian cycles up to symmetry}\label{sec:many-cycles}

In the previous sections, we studied which graphs have a unique Hamiltonian cycle up to symmetry. 
More generally speaking, we investigated the number of Hamiltonian cycles of a graph up to symmetry, with a focus on the special cases where this number is one. 
In this section, we take a look at the other end of the spectrum, i.e., we consider graphs that have as many Hamiltonian cycles up to symmetry as possible.

Let~\(G\) be a graph with~\(n\) vertices. Then,~\(G\) has at most~\(\frac{n!}{2n}\) many Hamiltonian cycles. For complete graphs, this bound is tight. Thus, asymptotically the maximum number of Hamiltonian cycles of a graph on~\(n\) vertices is in~\( \Theta((n-1)!)=2^{\Theta(n \log n)}\). We show that there are graphs that asymptotically have this many Hamiltonian cycles even up to symmetry.

\begin{restatable}{proposition}{manycycles}
	\label{prop:many-ham-cyc}	
	For every \(n \in \N\), there are graphs on~\(n\) vertices with~\(2^{\Theta(n \log n)}\) many Hamiltonian cycles up to symmetry.
\end{restatable}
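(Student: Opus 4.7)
The approach is the orbit bound: since each automorphism of a graph $G$ maps every Hamiltonian cycle to at most one other, the number of symmetry classes of Hamiltonian cycles is at least $|\mathrm{Ham}(G)|/|\Aut(G)|$. The task is therefore to exhibit, for every $n$, a graph on $n$ vertices whose number of Hamiltonian cycles is comparable to the trivial upper bound $(n-1)!/2$, while having a much smaller automorphism group.

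For $n = 2m$ I propose the following graph $G_n$: take the complete bipartite graph $K_{m,m}$ with bipartition $A = \{a_1,\dots,a_m\}$, $B = \{b_1,\dots,b_m\}$, and add the path edges $\{a_i,a_{i+1}\}$ for $i = 1,\dots,m-1$. First I verify that $|\Aut(G_n)| = 2 \cdot m!$. The degree sequence has three pairwise distinct values ($m$ on $B$, $m+1$ on the endpoints $a_1,a_m$, $m+2$ on the interior $A$-vertices), so every automorphism preserves $A$ and $B$ setwise; the restriction to $A$ must preserve the induced path $a_1\dots a_m$ and is therefore either the identity or the reversal, while every permutation of $B$ extends to an automorphism since all $b_j$ share the same closed neighbourhood.

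Next I argue that every Hamiltonian cycle of $G_n$ actually lies inside $K_{m,m}$: since $B$ is independent, the $B$-vertices split the cycle into $m$ maximal $A$-runs of total length $|A|=m$, forcing every run to have length exactly one. The cycle therefore alternates strictly and uses no path edge, giving $|\mathrm{Ham}(G_n)| = |\mathrm{Ham}(K_{m,m})| = m!(m-1)!/2$. Combining with the automorphism bound, the orbit inequality yields at least $(m-1)!/4 = 2^{\Theta(n\log n)}$ symmetry classes, which matches the trivial upper bound up to the hidden constant. For odd $n$ the same construction on $n-1$ vertices, augmented by one additional vertex attached so as to distinguish a designated edge, gives the same asymptotic bound.

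The main obstacle I anticipate is the Hamiltonicity count: one has to rule out Hamiltonian cycles that trade a $B$-vertex for a run of added path edges on $A$. The proof uses crucially both that $|A| = |B|$ and that the added edges form a path (rather than a cycle) on $A$; weakening either assumption would produce further Hamiltonian cycles and likely further automorphisms, which would break the clean quantitative bound.
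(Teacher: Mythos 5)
Your proposal is correct and uses the same underlying mechanism as the paper (the orbit bound: number of symmetry classes $\geq |\mathrm{Ham}(G)|/|\Aut(G)|$), but with a genuinely different construction. The paper takes $G_n = K_n \setminus C_n$, whose automorphism group has order only $2n$, and then only needs a crude lower bound of $(n-3)!/12$ on the number of Hamiltonian cycles, obtained by greedily extending partial vertex orderings; that construction works uniformly for all $n$. You instead take $K_{m,m}$ plus a path on one side, which has the much larger automorphism group of order $2\cdot m!$, but in exchange you get an \emph{exact} count: your run-counting argument (the $m$ independent $B$-vertices cut the cycle into $m$ nonempty $A$-runs totalling $m$ vertices, so each run is a single vertex) correctly shows every Hamiltonian cycle avoids the added path edges, so there are exactly $m!(m-1)!/2$ of them, and the ratio $(m-1)!/4$ is still $2^{\Theta(n\log n)}$. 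Both arguments are sound; yours trades a larger automorphism group for a cleaner cycle count. The one place you are too terse is the odd case: ``attached so as to distinguish a designated edge'' should be made concrete (e.g., add a vertex $c$ adjacent to $a_1$ and $b_1$ only; then $c$ has the unique minimum degree, so $\Aut$ drops to the stabilizer of $a_1,b_1$, and Hamiltonian cycles of the new graph correspond to the $(m-1)!^2$ Hamiltonian $a_1$--$b_1$ paths of $K_{m,m}$ --- note one must rerun the run-counting argument on $A\cup\{c\}$ to rule out cycles using an $A$-path edge, which works because $c$ has only one neighbour in $B$). A small cosmetic point: the vertices of $B$ share the same \emph{open} neighbourhood $A$, not the same closed neighbourhood; this does not affect the argument.
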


\begin{proof}
	Consider the graph~\(G_n=K_n \setminus C_n\). Since the automorphism group is invariant under taking complements, we have~\(|\Aut(G_n)|=|\Aut(C_n)|=2n\).
	On the other hand, the number of Hamiltonian cycles in~\(G_n\) is lower bounded by~\(\frac{(n-3)!}{12}\): To see this, label the vertices of~\(G_n\) by~\(\{1,\dots, n\}\) and write a Hamiltonian cycle as an ordered sequence of vertices beginning with~1.
	Since every vertex has two non-neighbors, there are~\(\frac{(n-3)!}{3!}\) ordered sequences of pairwise distinct vertices of length \(n-5\) beginning with 1, such that neighboring vertices in the sequence are adjacent in the graph. Every such sequence can be extended to a Hamiltonian cycle of the graph: The remaining five vertices contain \(K_5 \setminus P_5\) as a subgraph. In particular, they lie on a cycle of length five. Further, the first and last vertex of the sequence are each connected to at least three of the five vertices. Thus, there are two neighboring vertices on the cycle one of which is connected to the first vertex of the sequence and the other one is connected to the first vertex of the sequence. Hence, the sequence can be extended to a Hamiltonian cycle.
	Since every cycle appears in two orientations, we obtain that the total number of Hamiltonian cycles in $G_n$ is at least~$\frac{(n-3)!}{12}$.
	
	Thus, the total number of Hamiltonian cycles up to automorphism is at least 
	\[\frac{(n-3)!}{12}\cdot \frac{1}{|\Aut(G_n)|}=\frac{(n-3)!}{12 \cdot 2n} \in 2^{\Theta(n \log n)}.\] 
\end{proof}

%%%%%%%%%%%%%%%%%%%%%%%%%%%%%%%%%%%%%%%%%%%%%
%%% conclusion
%%%%%%%%%%%%%%%%%%%%%%%%%%%%%%%%%%%%%%%%%%%%

\section{Conclusion and future work}

In this paper, we initiated the study of the number of symmetry classes of Hamiltonian cycles in a graph. We mainly focused on graphs with a unique Hamiltonian cycle up to symmetry. We showed that this graph class contains \(d\)-regular graphs for every \(d \geq 2\) (\cref{rem:reg-graphs-in-H}) and broad families of 3-connected cubic graphs (\cref{thm:cub-graphs-in-H}).

Moreover, we studied Cartesian products and provided conditions on the factors for the product to be Hamiltonian-transitive (\cref{thm:cart-prod}). If one factor is $K_2$ and the other Hamiltonian, we gave a full characterization (\cref{thm:cartesian-product-K2}).

Further, by explicitly constructing Hamiltonian cycles, we proved that most Cayley graphs of abelian groups have more than one symmetry class of Hamiltonian cycles (\cref{thm:odd-cayley-graphs}, \cref{thm:cayley_even}). Based on these results, we conjecture the following.

\begin{conjecture}
	Let $\Gamma$ be a finite abelian group with generating set $S$ and let $G = \Cay(G,S)$. Then $G \in \mH$ if and only if \(G \in \{C_4 \bx K_2, C_n, K_n, K_{n,n}, C_k \bx K_2 \colon n,k \in \N, k \text{ odd}\}\).
\end{conjecture}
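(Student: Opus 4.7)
The forward direction amounts to verifying each graph on the list belongs to \(\mH\). This is routine: \(K_n\) and \(C_n\) are trivial, \(K_{n,n}\) is Hamiltonian-transitive because every Hamiltonian cycle alternates between the two parts of the bipartition and every pair of permutations of the parts is an automorphism, while \(C_k \bx K_2\) for \(k\) odd and \(C_4 \bx K_2\) are covered by Theorem~\ref{thm:cartesian-product-K2}. For the reverse direction, let \(G = \Cay(\Gamma, S) \in \mH\) and set \(n := |\Gamma|\). If \(n\) is odd, Theorem~\ref{thm:odd-cayley-graphs} yields \(G \in \{K_n, C_n\}\), so the substance of the proof lies in the even-order case.

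When \(n\) is even, we split according to whether the non-redundancy hypothesis of Theorem~\ref{thm:cayley_even} holds, i.e., whether there exists some \(s \in S\) with \(S \setminus \{\pm s\}\) failing to generate \(\Gamma\). In the non-redundant case the plan is to invoke Theorem~\ref{thm:cayley_even}, but the underlying Lemmas~\ref{lemma:l=2},~\ref{lemma:l=3}, and~\ref{thm:main_cayley} must first be strengthened so that they do not implicitly assume that \(\Gamma\) decomposes as \(\Delta \times \langle s \rangle\). Representations such as \(K_4 = \Cay(\Z_4, \{1,2,3\})\), in which \(\Delta = \langle 2 \rangle\) is not a complement of \(\langle s \rangle = \Z_4\), must be handled separately, and these are precisely the representations producing \(K_n\) or \(K_{n,n}\). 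The plan is to redo the case analysis in these lemmas without invoking Lemma~\ref{lemma:splittingcayley}: when \(\Delta \subseteq \langle s \rangle\), the graph \(G\) is a circulant on \(\langle s \rangle\) with extra chords, and a direct analysis of Hamiltonian cycles via shifts along \(\langle s \rangle\), in the spirit of the proof of Theorem~\ref{thm:cyclic_generator}, pins \(G\) down as either \(K_n\) or \(K_{n,n}\).

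The genuinely new case is the fully redundant one: \(n\) even and for every \(s \in S\), the set \(S \setminus \{\pm s\}\) still generates \(\Gamma\). The goal here is to show \(G \in \{K_n, K_{n,n}\}\). My plan proceeds in two steps. First, using Lemmas~\ref{lemma:splittingcayley} and~\ref{lemma:compositeorder} together with Theorem~\ref{thm:cart-prod}, one reduces to the case that \(\Gamma\) is a \(p\)-group; since \(n\) is even, \(p=2\). Second, for each pair \(s, t \in S\) with \(s \ne \pm t\), one uses the redundancy to construct a family of zigzag-like Hamiltonian cycles alternating edges of types \(\pm s\) and \(\pm t\) in different patterns. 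Hamilton-transitivity then forces automorphisms between these cycles, and a careful count of edge types (in the spirit of Lemmas~\ref{lem:prime-not-in-H} and~\ref{thm:main_cayley}) forces every pair of vertices to be adjacent, except possibly those differing by an element of a distinguished index-two subgroup, which leaves exactly \(K_n\) and \(K_{n,n}\).

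The main obstacle is precisely this fully redundant step. The layer-structure machinery of Section~\ref{sec:layer-structure} requires \(\Delta = \langle S' \rangle\) to satisfy \(\langle S' \rangle \cap S = S'\), which necessarily fails in the redundant setting, so we lose the canonical way to distinguish intra-layer from inter-layer edges via automorphisms. An inductive route that removes a redundant pair \(\{s, -s\}\) from \(S\) is fragile because the smaller Cayley graph need not inherit Hamiltonian-transitivity and because Hamiltonian cycles of the smaller graph lift to only a subset of those of \(G\). Overcoming this will likely require an invariant — for instance bipartiteness, the \(2\)-rank of \(\Gamma\), or a spectral quantity — that controls the global structure of \(G\) well enough to force the complete or complete-bipartite conclusion.
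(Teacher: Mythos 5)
The statement you are proving appears in the paper only as a conjecture; the authors do not prove it, and your proposal does not close that gap either. Your treatment of the known cases is fine: the forward direction, the odd-order case via Theorem~\ref{thm:odd-cayley-graphs}, and the appeal to Theorem~\ref{thm:cayley_even} in the non-redundant even case all rest on results the paper actually establishes, and you correctly observe that the lemmas behind Theorem~\ref{thm:cayley_even} implicitly use a splitting $\Gamma = \Delta \times \langle s\rangle$ that fails for presentations such as $K_4 = \Cay(\Z_4,\{1,2,3\})$ --- though the repair you propose for that is itself only a sketch. The entire content of the conjecture beyond Theorems~\ref{thm:odd-cayley-graphs} and~\ref{thm:cayley_even} is the fully redundant even-order case, and there your argument is a plan rather than a proof: no zigzag-like cycles are actually constructed, no edge-type count is carried out, and you concede in your final paragraph that you do not know how to control the structure of $G$ once the group-induced layer structure (which requires $\langle S'\rangle \cap S = S'$) is unavailable.

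Moreover, the first step of that plan is wrong as stated. You claim that Lemmas~\ref{lemma:splittingcayley} and~\ref{lemma:compositeorder} together with Theorem~\ref{thm:cart-prod} reduce the fully redundant case to $\Gamma$ a $2$-group. Lemma~\ref{lemma:compositeorder} only applies when every element of $S$ has order either a power of $p$ or coprime to $p$, and the tool the paper uses to dispose of mixed orders, part~(1) of Lemma~\ref{lemma:divisibilitylemma}, is stated only for $|\Gamma|$ odd; in addition, Lemma~\ref{lem:prime-not-in-H} needs both factors Hamiltonian, which fails when one factor is $K_2$. More decisively, the reduction cannot hold: $K_{6,6} = \Cay(\Z_{12}, \{1,3,5,7,9,11\})$ and $K_{12} = \Cay(\Z_{12}, \Z_{12}\setminus\{0\})$ are Hamiltonian-transitive, every $S\setminus\{\pm s\}$ still generates in both presentations, and $\Z_{12}$ is not a $2$-group. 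Any correct argument for the fully redundant case must therefore accommodate such examples, which your step one excludes. The conjecture remains open; your proposal correctly locates the difficulty but does not resolve it.
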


Finally, we showed that there are graphs with asymptotically as many symmetry classes of Hamiltonian cycles as possible (\cref{prop:many-ham-cyc}).

Overall, we believe that the enumeration and analysis of substructures in graphs up to symmetry is an important topic that should be further pursued. For example, we suggest the following research directions: 

\begin{itemize}
	\item investigate Hamiltonian transitivity for further classes of graphs, for instance, further Cayley graphs that are known to be Hamiltonian,
	\item study the computational aspects of this problem, for instance, by developing algorithms for enumerating Hamiltonian cycles up to symmetry, 
	\item explore analogous notions of transitivity for other substructures in graphs, such as perfect matchings.
\end{itemize}

%%%%%%%%%%%%%%%%%%%%%%%%%
%%% bibliography
%%%%%%%%%%%%%%%%%%%%%%%%%

\bibliography{TransitiveHamiltonian-ArXiv.bib}

\end{document}